\documentclass[12pt, titlepage, a4paper,  draft]{article}

\usepackage{amssymb, latexsym,amsfonts,amsmath,amsthm}
\usepackage{verbatim}

\usepackage{url}

\usepackage[usenames,dvipsnames]{xcolor}





{\makeatletter \@addtoreset{equation}{section}}

\newcommand{\diag}{\mathop{\mathrm{diag}}\nolimits}
\newcommand{\Ker}{\mathop{\mathrm{Ker}}\nolimits}
\newcommand{\IIm}{\mathop{\mathrm{Im}}\nolimits}
\newcommand{\rank}{\mathop{\mathrm{rank}}\nolimits}

\newcommand{\Inv}[1][\,]{\mathop{\mathrm{Inv}#1}\nolimits}

\newcommand{\Z}{{\mathbb{Z}}}



\newcommand{\h}{ \mathop{ \mathrm{h} {} }\nolimits }
\newcommand{\e}{ \mathop{ \mathrm{e} {} }\nolimits }

\newcommand{\beq}{\begin{equation}}
\newcommand{\eeq}{\end{equation}}

\newcommand{\LR}{\Leftrightarrow}
\newcommand{\Ra}{\Rightarrow}

\newcommand{\bpm}{\begin{pmatrix}}
\newcommand{\epm}{\end{pmatrix}}




\newcommand{\iiloplus}
{\hbox{  {\boldmath{ \large{$\oplus$}
                   }
         }
       }
}

\newtheorem{theorem}{Theorem}[section]
\newtheorem{lemma}[theorem]{Lemma}
\newtheorem{definition}[theorem]{Definition}
\newtheorem{example}[theorem]{Example}
\newtheorem{corollary}[theorem]{Corollary}
\newtheorem{prop}[theorem]{Proposition}

\newtheorem{obs}[theorem]{Observation}

\newtheorem{remark}[theorem]{Remark}

\newtheorem{conj}[theorem]{Conjecture}



\definecolor{bbl}{rgb}{0.2 , 0.1  , 0.8 }

\definecolor{ungu}{rgb}{0.8 , 0  , 0.7 }
 \definecolor{exp}{rgb}{0.85,0.7,0}
  \definecolor{msk}{rgb}{0.65,0.3,0.3}

\definecolor{msk2}{rgb}{0.75,0.3,0.1}
\definecolor{r}{rgb}{0.75,0.1,0.4}

\definecolor{exp2}{rgb}{0.85,0.6,0.1}

\definecolor{exp4}{rgb}{0.85,0.3,0.5}

\definecolor{vvs}{rgb}{0.5,0.4,0.7}

\definecolor{kf}{rgb}{0.9,0.0,0.1}
\definecolor{gr}{rgb}{0.3,0.0,0.7}



\hyphenation{The-orem}

\hyphenation{the-orem}

\hyphenation{equi-valent}
\hyphenation{equi-valence}
\hyphenation{hyper-in-vari-ant}

\hyphenation{genera-tor}

\hyphenation{sub-space}
\hyphenation{sub-spaces}


\begin{document}

\title
{Characteristic subspaces and  hyperinvariant frames}

\author{Pudji Astuti
\\
Faculty of Mathematics\\
 and Natural Sciences\\
Institut Teknologi Bandung\\
Bandung 40132\\
Indonesia
\thanks{The work of the first author was supported by the program  ``Riset 
dan Inovasi KK ITB''  of  the  Institut Teknologi Bandung.}  
         \and
Harald~K. Wimmer\\
Mathematisches Institut\\
Universit\"at W\"urzburg\\
97074 W\"urzburg\\
Germany
}

\date{}

\maketitle

\begin{abstract}

Let $f$ be an endomorphism of a finite dimensional vector 
space  $V$  over a field $K$.  
An $f$-invariant subspace 
is called
 hyperinvariant (respectively characteristic)   if 
it is invariant under all endomorphisms
(respectively  auto\-morphisms) that commute with $f$.
We assume $|K| = 2$, 
since all characteristic subspaces are  hyperinvariant
if $|K| > 2$. 
The hyperinvariant hull $W^h$ of  a subspace  $ W$ of $ V$
is defined to be the smallest hyperinvariant  subspace of  $V$
that contains $ W$, the hyperinvariant kernel $W_H$  of $ W$  
is the largest  hyperinvariant  subspace of $V$ that is 
contained in $W$, and the pair $( W_H, W^h) $ is 
the hyperinvariant frame of $W$.  In this paper we study
 hyperinvariant frames of characteristic non-hyperinvariant 
subspaces $W$. 
We show that all invariant subspaces in the interval
$[  W_H, W^h ]$ are characteristic. 
We use this result  for the construction of  characteristic non-hyperinvariant 
subspaces.

\vspace{10mm}
\noindent
{\bf Mathematical Subject Classifications (2010):}
 	15A04, 
47A15, 
15A18,   
20K01.   
\vspace{.5cm}

\noindent
 {\bf Keywords:}  characteristic sub\-spaces,
 hyperinvariant subspaces,
  invariant subspaces,   
characteristic hull, hyperinvariant hull,  hyperinvariant frame, 
exponent, height, 
fully invariant subgroups, lattice intervals.

\vspace{.2cm}
\flushleft{
{\textsf e-mail:}~~\texttt{\small wimmer@mathematik.uni-wuerzburg.de} }\\

\flushleft{
{\textsf e-mail:}~~\texttt{\small
pudji@math.itb.ac.id
} }

\end{abstract}


\section{Introduction} \label{sct.ntrd}
Let 
$V$ be  an $n$-dimensional vector space over a field $K$ and let 
$f: V \to V$ be $K$-linear. 
A subspace $X \subseteq V$ is said to be {\em{hyperinvariant}}
under  $f$   
(see e.g.\ \cite[p.\ 305]{GLR}) 
if it  remains invariant under
all endomorphisms of $V$ that commute with $f$.
If $X$ is an   $f$-invariant subspace of $V$ and if
$X$ is  invariant
under
 all automorphisms of $V$ that commute with $f$,
then \cite{AW1}  
  we say that    $X$ is   {\em{characteristic}} (with respect to $f$).
Let $ {\rm{Inv}}( V,f) $, \,$   {\rm{Hinv}}(V,f)$, and
$  {\rm{Chinv}} (V,f)  $ be  sets of  invariant,   hyperinvariant and
  characteristic  subspaces of  $V$, respectively.
These sets are lattices (with respect to set inclusion), and
 \[   {\rm{Hinv}} (V,f)
 \subseteq   {\rm{Chinv}} (V,f)   \subseteq   {\rm{Inv}}( V,f)  .
\]
The structure of the  lattice  \,$   {\rm{Hinv}} (V,f) $\,
is well understood (\cite{Lo},  \cite{FHL},
\cite{Lo2},  \cite[p.\,306]{GLR}). 
If $ f $ is nilpotent then 
 \,$   {\rm{Hinv}} (V,f)  $\, is the 
sublattice of $ {\rm{Inv}}( V,f) $
generated by
\[
  \Ker f ^k,  \,\, \IIm f^k ,  \: k = 0, 1, \dots , n.
\]
It is   known  (\cite{Sh}, \cite[p.\,63/64]{Kap},   \cite{AW1})
that each  characteristic subspace
is hyper\-invariant if  $|K| > 2$.
Hence, only if $V$ is a vector space over the
field $ K = GF(2) $ there may  exist  $K$-endomorphisms  $f$  of  $V$
with   characteristic subspaces that are not   hyperinvariant.

If the   characteristic
  polynomial of $f $ splits over $K$ (such that all eigen\-values of
$f$ are in $ K$)
 then 
  one can restrict the
  study of 
 hyperinvariant and of characteristic  subspaces to the case where
$ f $ has only one eigenvalue, and therefore to the
case where $f$ is nilpotent.
Thus, throughout this paper we shall assume \,$ f^n = 0 $.
Let 
\,$  \Sigma(\lambda)  =  \diag (1, \dots , 1, 
 \lambda^{t_1}, \dots , \lambda^{t_m}) \in K^{n \times n}[\lambda] $\,
be the Smith normal  form of $f$ such that $ t_1 + \cdots +t_m = n$.  
We say that 
$ \lambda ^{t_j}$  is  an  {\em{unrepeated}} elementary divisor  of $f$
if  it  appears exactly once in $\Sigma(\lambda) $. 
We note the following result which is due to Shoda 
(see also \cite[Theorem~9, p.\,510]{Bae} and  \cite[p.\,63/64]{Kap}).

\begin{theorem}    \label{thm.vnpsa}  
{\rm{\cite[Satz~5, p.\,619]{Sh}}}      
Let $ V $ be a finite dimensional vector space over
  the field  \,$ K =  GF(2) $ and let $ f : V \to V $ be nilpotent.
The following statements are equivalent.
\begin{itemize}
\item[{\rm{(i)}}]
There exists a  characteristic subspace of  \,$V$
 that is not  hyper\-invariant.
\item[{\rm{(ii)}}]
The map $f$ has  unrepeated elementary divisors $\lambda^R $
and $\lambda^S $  such that \,$ R + 1  < S $. 
\end{itemize}
\end{theorem}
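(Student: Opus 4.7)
The plan is to analyse both implications through the commutant of $f$. Fix a primary decomposition $V = \bigoplus_{i=1}^m K[f] v_i$ with $\mathrm{ord}(v_i) = t_i$, and let $V^{(k)} = \bigoplus_{t_i = k} K[f] v_i$ be the $k$-th homogeneous component, of rank $n_k$. Any $K$-linear map commuting with $f$ is encoded by an $m \times m$ array of $K[f]$-module homomorphisms between cyclic summands; it is an automorphism precisely when its reduction modulo $(f)$ on each $V^{(k)}$ lies in $GL(n_k, K)$. Over $K = GF(2)$ this forces the diagonal constant term on every unrepeated block ($n_k = 1$) to equal $1$, so the action of $\Aut_f(V)$ on an unrepeated cyclic summand is, modulo the Jacobson radical, trivial.

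For (ii)$\Rightarrow$(i) I would construct an explicit witness. Let $v, u$ generate the unrepeated cyclic summands of orders $R < S$, set $w = v + f^{S-R-1} u$, and take
\[
W \;=\; K[f]\, w \;+\; U,
\]
where $U$ is a hyperinvariant subspace (a sum of terms such as $V^{(k)}$ for $k < R$, $\Ker f^R \cap fV$, and $f^{S-R} V$) chosen to absorb the cross-terms produced by commuting automorphisms. The hypothesis $R + 1 < S$ ensures $f^{S-R-1} u \neq 0$. I would verify directly that (a) $W$ is stable under every $\alpha \in \Aut_f(V)$, since $\alpha(w) - w \in U \subseteq W$ and $\alpha(U) = U$, so $W$ is characteristic; and (b) the commuting endomorphism $\phi$ that acts as the identity on $K[f] u$ and as zero on every other summand sends $w$ to $f^{S-R-1} u \notin W$, the non-containment following from a coefficient comparison in the $V_R$- and $V_S$-parts of any hypothetical expansion.

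For (i)$\Rightarrow$(ii) I would argue by contrapositive: assume that any two unrepeated elementary divisors have exponents differing by at most one, so the set of unrepeated exponents has size at most two and, if two, consists of consecutive integers. The plan is to show that under this hypothesis the characteristic hull of every element $w \in V$ equals its hyperinvariant hull, whence every characteristic subspace is hyperinvariant. On a repeated homogeneous component ($n_k \geq 2$) this follows because in $M_{n_k}(K[f]/(f^k))$ every element is a sum of two invertibles, yielding pointwise decompositions $\phi(w) = \alpha_1(w) + \alpha_2(w)$. On an isolated unrepeated block the characteristic subspaces coincide with the $\IIm f^j$'s, which are already hyperinvariant. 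The closeness hypothesis on unrepeated exponents rules out any cross-block Shoda-type construction of the kind used in (ii)$\Rightarrow$(i), since the gap $S - R - 1 \geq 1$ needed to detach a characteristic cyclic submodule from its hyperinvariant hull is unavailable.

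The main obstacle is the contrapositive direction, which demands a careful block-by-block analysis of characteristic hulls and a verification that the closeness hypothesis is exactly what is needed to collapse each characteristic hull onto the corresponding hyperinvariant hull. The forward direction (ii)$\Rightarrow$(i) is constructive but also somewhat delicate, because one must identify the correct hyperinvariant adjustment $U$ to add to the cyclic module $K[f] w$ so that $W$ becomes characteristic without becoming hyperinvariant.
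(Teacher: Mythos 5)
First, a remark on the target: the paper does not actually prove Theorem~\ref{thm.vnpsa}; it quotes it from Shoda. What the paper does contain is a complete proof assembled from its own machinery: the implication (i)\,$\Rightarrow$\,(ii) follows from Theorem~\ref{thm.wogn}(iii) together with Lemma~\ref{la.strug} (which shows that for a characteristic non-hyperinvariant $X$ the set $J(X)\subseteq I_u$ has at least two elements and that $t_p+1<t_q$ for $p<q$ in $J(X)$), and (ii)\,$\Rightarrow$\,(i) is Corollary~\ref{cor.shrv}. I compare your proposal against that.

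Your (ii)\,$\Rightarrow$\,(i) is essentially the paper's argument. Your witness $w=v+f^{S-R-1}u$ is exactly the element $f^{s}u+f^{q}v$ of Corollary~\ref{cor.shrv} with $s=0$, $q=S-R-1$ (the inequalities $0\le s<q$ and $0<R-s<S-q$ then reduce precisely to $R+1<S$); your hyperinvariant ``absorbing'' subspace $U$ plays the role of $X_H=W(\vec r\,)$, and your detecting endomorphism $\phi$ is the projection $\pi_j$ used in Example~\ref{ex.btw}. This half is fine as a sketch, modulo pinning down $U$ exactly (it must contain $fv$, $f^{S-R}u$, and the contributions $\langle u_j\rangle[f^{R}]$ resp.\ $\langle u_j\rangle[f^{R+1}]$ from the remaining cyclic summands, as in Lemma~\ref{la.nurpd}).

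The genuine gap is in (i)\,$\Rightarrow$\,(ii). Your contrapositive reduction (if every $\langle x\rangle^{c}$ equals $\langle x\rangle^{h}$ then every characteristic subspace is hyperinvariant) is sound, and your block analysis correctly isolates the cross-block case as the crux. But there you only assert that ``the closeness hypothesis rules out any cross-block Shoda-type construction'' --- that is, you show your own construction from the forward direction becomes unavailable, which does not show that \emph{no} characteristic non-hyperinvariant subspace exists. Moreover, the ``sum of two invertibles'' argument you invoke handles only endomorphisms preserving a homogeneous component; the dangerous elements of $\End(V,f)$ are precisely the projections and cross-block maps, for which that argument is not set up. What is needed is the content of Lemma~\ref{la.strug}: starting from an \emph{arbitrary} characteristic non-hyperinvariant $X$, one applies the exchange automorphisms $\alpha(u_q,u_q+u_p)$ and $\alpha(u_p,u_p+f^{\,t_q-t_p}u_q)$ to an element of $X$ with prescribed $\pi_q$- resp.\ $\pi_p$-component; invariance of $X$ under these forces $\mu_p<\mu_q$ and $t_p-\mu_p<t_q-\mu_q$ for $p<q$ in $J(X)$, hence $t_p+1<t_q$, and $J(X)\subseteq I_u$ with $|J(X)|\ge 2$ then yields two unrepeated, non-consecutive exponents. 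Without an argument of this kind (or an explicit computation of $\langle z\rangle^{c}$ for a general $z$ in canonical form, as in Theorem~\ref{thm.prla}), the backward implication is not established.
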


Suppose $X \in  {\rm{Inv}}( V,f) $. Let 
$X_H $  be the largest element in $ {\rm{Hinv}}( V,f)  $ 
 such that $ X_H \subseteq X $,  
and  let $X^h $  be the smallest   element in $ {\rm{Hinv}}( V,f)  $ 
  such that  $ X \subseteq X^h $.  Then  $  X_H  \subseteq X  \subseteq X^h$. 
We call $X_H $ and $X^h $   the {\em{hyperinvariant kernel}} 
and the {\em{hyperinvariant hull}}  of $X$, 
 respectively, and 
we say that  the pair $(X_H , X^h) $  is the {\em{hyperinvariant frame}} 
of $X$. 
Thus, $X \in  {\rm{Chinv}}( V,f)  $  is not hyperinvariant if and only if 
$  X_H   \subsetneqq X  \subsetneqq X^h$.  
In this paper we study  pairs $(X_H , X^h)  $ which can 
occur as hyperinvariant frames of characteristic non-hyperinvariant subspaces.
We shall see that all elements  of   corresponding intervals
$[X_H , X^h]$ are characteristic subspaces.  
We regard this fact as  essential for further investigations of the 
lattice structure of $  {\rm{Chinv}} (V,f)  $.
Our main results are contained in  Sections 
~\ref{sbs.frm} - \ref{sbs.jso}.
In Section~\ref{sbs.notat}  we introduce 
basic concepts such as exponent and height, generator tuples 
 and  the group of $f$-commuting automorphisms. Related auxiliary 
material is gathered together in Section~\ref{sbs.gen}.
Hyperinvariant subspaces are reviewed in  Section~\ref{sbs.hyp}.

We remark that  Shoda \cite{Sh}  deals with   abelian groups. 
But 
it is known (see e.g.\  \cite{BCh}) that
in many instances 
methods or concepts  
of abelian group theory can be applied to linear algebra
if they are translated 
to modules over principal ideal domains and then specialized
to $K[\lambda]$-modules. 
On the other hand  there are parts  of linear algebra  that  can 
  be interpreted in the framework 
 of  abelian group theory.  In our case  
the language would change, and 
proofs    would carry over  almost verbatim  to finite abelian $p$-groups \cite{FuII}.
Instead  of  hyperinvariant subspaces one would deal with
 subgroups that are   fully invariant,  
and instead of characteristic non-hyperinvariant subspaces  with 
irregular characteristic subgroups \cite{Bae}, \cite{KR}. 


\subsection{Notation and basic concepts}  \label{sbs.notat}

Let $ x \in V $.   Define $ f^0 x  = x $.
The smallest nonnegative integer $\ell$
 with
$f^{\ell} x = 0$
is  called
the {\em{exponent}} of $x$. We write  $\e(x) = \ell$.
A  nonzero vector  $x $
  is said to have
\emph{height} $q $ if $x \in f^q V$
and $x \notin f^{q+1} V$.
In this case we write $\h(x) = q$. We set $ \h ( 0 ) =   \infty $.
The group of  automorphisms of $V$
that commute with $ f $ will be denoted by
 ${\rm{Aut}}(V,f) $. Then  ${\rm{Aut}}(V,f)  \subseteq  {\rm{End}}(V,f) $, where
 $  {\rm{End}}(V,f) $ is the algebra of
all endomorphisms of $V$ that commute with $f$. 
Clearly, if  $ \alpha \in {\rm{Aut}}(V,f) $ then
  $ \alpha ( f^i  x ) = f^i(  \alpha x ) $  for all
$x \in  V$. Hence 
$  \e( \alpha x ) = \e(x)$ and $  \h( \alpha x ) = \h(x) $ 
for all   $x \in V, \,  \alpha \in {\rm{Aut}}(V,f)$.  
We set $  V[ f^j ] =  \Ker f^j $, $j \ge 0 $.
Thus,  the assumption  $ f^n = 0 $ implies $ V =  V[ f^n]$. 
Let
\begin{multline*}
 \langle x  \rangle \,  = \,  {\rm{span}} \{ f^i x , \, i \ge 0 \}
=
\\
\{c_0x + c_1 f x + \cdots + c_{n-1} f^{n-1}x ;
\,
  c_i \in K, \, i = 0, 1, \dots , n-1 \}
\end{multline*}
 be  the  $f$-cyclic subspace  generated by $ x $.  
To $ B  \subseteq V$ we associate the subspaces 
\,$  \langle B \rangle  = 
 \sum _{b \, \in \,  B } \,  \langle \,  b  \, \rangle  $,  and  
\[  B  ^c  =  \cap \{  W \in {\rm{Chinv}} (V,f) ; \,B \subseteq W \} , \: \; 
 B  ^h = \cap \{  W \in {\rm{Hinv}} (V,f) ; \,B \subseteq W \}. 
\]
Then  
\[ 
   \big\langle   B  \big\rangle ^c =  \big\langle  \alpha b ; \, b \in B,
\alpha \in   {\rm{Aut}}(V,f)  \big\rangle =
\sum \nolimits  _{b \in B} \langle   b \rangle ^c , 
\]
and  
\[
   \big\langle   B  \big\rangle ^h =  \big\langle  \eta b ; \, b \in B,
\eta \in   {\rm{End}}(V,f)  \big\rangle =
\sum \nolimits  _{b \in B} \langle   b \rangle ^h,
\]
and  \,$    \langle B \rangle ^c \subseteq   \langle B \rangle ^h $.  
We call  the subspaces $ B  ^c $  and $ B  ^h $ the 
{\em{characteristic hull}}  and the {\em{hyperinvariant hull}} of $  B  $,
respectively. 
A  subspace $X$ is  hyperinvariant if and only if 
$ X = \langle   X \rangle ^c =  \langle   X  \rangle ^h $.

Suppose  $ \dim \Ker f = m $.  Let $ \lambda ^{t_1} , \dots , \lambda ^{t_m} $
be the elementary divisors of $f$ such that
$ t_1 + \cdots + t_m = \dim V  $. 
Then $V$ can be decomposed into a direct sum of $f$-cyclic  subspaces
$ \langle u_j \rangle $ such that
\beq \label{eq.vdeco} 
 V =
  \langle u_1 \rangle
 \,  \oplus \,
\cdots \,  \oplus    \langle u_{ m } \rangle \quad \text{and} 
\quad \e(u_j) = t_j ,  \: j= 1, \dots , m . 
\eeq  
Let $\pi _j : V \to V$, $ j = 1, \dots , m$,   be  projections 
be defined by 
\[\IIm  \pi _j =  \langle u_j\rangle \quad {\rm{and}} \quad  \Ker \pi _j 
= 
\langle u_1, \dots , u_{j-1} ,  u_{j+1} , \dots , u_m \rangle .
\]
Note that $ \pi_j \in  {\rm{End}}(V,f) $.
If \eqref{eq.vdeco} holds 
and 
\beq \label{eq.teug}  
0 < t_1 \,\,   \le \,\,  \cdots \,\,  \le \,\,  t_m , 
\eeq 
then we  say that 
  \,$ U = ( u_1, \dots , u_m) $\,    is  a
{\em{generator tuple}} of $V$
(with respect \mbox{to $f$}). 
The  tuple $(t_m , \dots , t_1) $ of exponents  - written
in nonincreasing order -  is known as {\em{Segre characteristic}} 
of $f$. 
The set of  generator tuples  of $V$ will be denoted by
 $ \mathcal{U} $.
We call $ u \in V $
 a {\em{generator}}  of $V$
(see also \cite[p.4]{FuI})  if 
$u \in U $ for some $U \in  \mathcal{U} $. 
Then 
$u \in V $ is a generator if and only if $u\ne0$ and
\[
  V =   \langle u \rangle \oplus V_2  \quad \text{for some}
  \quad V_2 \in  {\rm{Inv}}( V,f) . 
\]
Unrepeated elementary divisors  $\lambda ^{t_i} $ 
and  corresponding generators  
will play a crucial role in this paper. Therefore we single out the 
corresponding unrepeated 
exponents  $t_i$ and define a set of indices 
\[
I_u =  \{ i ; \,     t_i \ne t_k    \:  \:  \hbox{if} \:   \:   k \ne i,  \, 1 \le k \le m \} .
\]
Hence  we have  
$ i \in I_u $ if and only if 
\beq \label{eq.ulm}
 \dim \big(  V[f] \cap  f ^{t_{i} -1 }V \,\,  /  \, \, V[f]  \cap  f ^{t_i}  V \big) = 1 .
\eeq 
The left-hand side of \eqref{eq.ulm} 
is the $(t_i -1)$-th Ulm invariant of $f$ (see \cite[p.\,154]{FuI},  \cite[p.27]{Kap}).
We say that  a generator   $u$  is unrepeated if  \,$ \e(u) = t_i $ and 
$ i \in I_u$. 


\subsection{Hyperinvariant subspaces} \label{sbs.hyp}

Let  $ U = (u_1, \dots , u_m) \in \mathcal{U} $ be a generator tuple 
such that \eqref{eq.vdeco} and  \eqref{eq.teug} hold. 
Define  \mbox{$ \vec{t}  = (t_1 , \dots , t_m) $}. 
Let  $\mathcal{L}( \vec t \, ) $  be the set of $m$-tuples 
$\vec r = (r_1 , \dots , r_m ) $ 
of integers 
satisfying
\beq \label{eq.zwrug}  
0  \;  \le \;   r_1 \;  \le  \; \cdots \;   \le \;   r_m  \:\:\,  {\rm{and}} \:\:\, 
0 \;   \; \le \;  t_1 - r_1  \;   \le \;  \cdots  \; \le \;  t_m -  r_m  .
\eeq 
We write \, $ \vec r   \preceq \vec s $ \, 
if  \,$ \vec r= ( r_j )_{j=1} ^m $, $ \vec  s = (s_j ) _{j=1} ^m  \,  \in 
\, \mathcal{L}(\vec t \, \, ) $\,  
 and 
 \,$r_j \le s_j$, \, $ 1 \le j \le m$.  
Then  $\big( \mathcal{L}(\vec t \, ),   \preceq \! \big) $ is a
 lattice.  The following theorem is due to Fillmore, Herrero and 
Longstaff \cite{FHL}.  We  refer to \cite{GLR} for a proof.
A related result concerning  fully invariant subgroups of abelian $p$-groups
is Theorem~2.8 in \cite{BEis}.

\begin{theorem}   \label{thm.fhl} 
Let $f : V \to V $ be nilpotent. 
\begin{itemize} 
\item[\rm{(i)}]
If $ \vec r \in  \mathcal{L}(\vec t \,) $,   
then 
\[ 
W(\vec r \, ) = f ^{r_1}V \cap V[ f^{t_1 - r_1} ]
 \,  + \cdots + \, 
 f^{r_m}V \cap V[ f^{t_m- r_m} ] 
\]
 is a hyperinvariant subspace.
Conversely, each
$ W \in  {\rm{Hinv}}(V,f)$ is of the form
$W = W(\vec r \, ) $ for some $ \vec r \in  \mathcal{L}(\vec t \, ) $.
\item[\rm{(ii)}] 
 If  $ \vec r \in  \mathcal{L}(\vec t \, ) $
then 
\,$ W(\vec r \, ) =
  f ^{r_1}  \langle u_1 \rangle  \,  \,  \oplus  \cdots  \,  \,
\oplus   
f ^{r_m}  \langle  u_m \rangle  $. 
\item[\rm{(iii)}]   The mapping  \,$ \vec r \mapsto W( \vec r \, ) $\, 
is a lattice isomorphism from \,$\big( \mathcal{L}(\vec t \, \, ),  \preceq \! \!\big) $\,
  onto $\big( {\rm{Hinv}} (V,f), \supseteq \!\!  \big)$.
\end{itemize} 
\end{theorem}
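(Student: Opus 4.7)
My plan is to prove (ii) first — it gives an explicit direct-sum form for $W(\vec r\,)$ that trivializes the rest — and then use it to handle both directions of (i) and finally read off (iii). Throughout I work in the fixed decomposition $V = \bigoplus_j \langle u_j\rangle$. The forward direction of (i) is immediate: for every $k \ge 0$, the subspaces $f^k V$ and $V[f^k] = \Ker f^k$ are hyperinvariant, since the commutation $f^k \eta = \eta f^k$ forces any $\eta \in \End(V,f)$ to preserve both. As ${\rm{Hinv}}(V,f)$ is closed under sums and intersections, each $W(\vec r\,)$ is hyperinvariant.

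For (ii), both $f^{r_i}V$ and $V[f^{t_i-r_i}]$ respect the decomposition of $V$, so the intersection can be computed componentwise. In the $j$-th cyclic summand, $f^k \langle u_j\rangle$ is zero for $k \ge t_j$ and otherwise equals the span of $f^k u_j, \dots, f^{t_j-1} u_j$, while $\langle u_j\rangle \cap V[f^\ell] = f^{\max(0,\, t_j - \ell)}\langle u_j\rangle$. A short calculation then gives
\[
f^{r_i} V \cap V[f^{t_i-r_i}] \,=\, \bigoplus_{j \le i} f^{r_i}\langle u_j\rangle \,\oplus\, \bigoplus_{j > i} f^{t_j - t_i + r_i}\langle u_j\rangle.
\]
Summing over $i$, the two monotonicity conditions in \eqref{eq.zwrug} — that $r_i$ and $t_i - r_i$ are both nondecreasing in $i$ — ensure that within the $\langle u_j\rangle$-component the smallest exponent that appears is attained precisely at $i = j$, yielding $f^{r_j}\langle u_j\rangle$. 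Assembling these gives (ii).

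For the converse in (i), let $W \in {\rm{Hinv}}(V,f)$. Because each projection $\pi_j$ lies in $\End(V,f)$, $\pi_j W \subseteq W$, hence $W = \bigoplus_j (W \cap \langle u_j\rangle)$, and each $W \cap \langle u_j\rangle$ is an $f$-invariant subspace of the cyclic space $\langle u_j\rangle$, so has the form $f^{r_j}\langle u_j\rangle$ for a unique integer $r_j$ with $0 \le r_j \le t_j$. To verify $\vec r \in \mathcal{L}(\vec t\,)$, I would construct, for each pair of consecutive indices $j, j+1$ (so $t_j \le t_{j+1}$), two $f$-commuting endomorphisms: $\eta$ sending $u_j \mapsto f^{t_{j+1}-t_j} u_{j+1}$ and all other $u_k \mapsto 0$, and $\eta'$ sending $u_{j+1} \mapsto u_j$ and all other $u_k \mapsto 0$. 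Both are well-defined elements of $\End(V,f)$ because the required annihilator conditions follow from $t_j \le t_{j+1}$. Applying $\eta$ to $f^{r_j} u_j \in W$ and $\eta'$ to $f^{r_{j+1}} u_{j+1} \in W$, hyperinvariance forces $t_{j+1}-r_{j+1} \ge t_j - r_j$ and $r_{j+1} \ge r_j$ respectively (the latter is automatic when $r_{j+1} \ge t_j$, since then $r_j \le t_j \le r_{j+1}$). Thus $\vec r \in \mathcal{L}(\vec t\,)$ and, by (ii), $W = W(\vec r\,)$.

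Statement (iii) then falls out: the map $\vec r \mapsto W(\vec r\,)$ is well-defined and surjective by (i), injective because the direct sum in (ii) determines each $r_j$ uniquely, and $\vec r \preceq \vec s \Leftrightarrow W(\vec s\,) \subseteq W(\vec r\,)$ because $s_j \ge r_j$ iff $f^{s_j}\langle u_j\rangle \subseteq f^{r_j}\langle u_j\rangle$. The main obstacle I anticipate is the converse in (i): one has to construct commuting endomorphisms between adjacent cyclic summands with compatible annihilators and check that the two resulting inequalities together reproduce precisely the defining conditions of $\mathcal{L}(\vec t\,)$.
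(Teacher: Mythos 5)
Your proof is correct. Note, however, that the paper does not prove Theorem~\ref{thm.fhl} at all: it attributes the result to Fillmore, Herrero and Longstaff and refers to \cite{GLR} for a proof, so there is no in-paper argument to compare against. Your self-contained argument is essentially the standard one: the componentwise computation giving $f^{r_i}V \cap V[f^{t_i-r_i}] = \bigoplus_{j\le i} f^{r_i}\langle u_j\rangle \oplus \bigoplus_{j>i} f^{t_j-t_i+r_i}\langle u_j\rangle$ is right, the minimization over $i$ correctly uses both monotonicity conditions of \eqref{eq.zwrug}, and the two test endomorphisms $u_j\mapsto f^{t_{j+1}-t_j}u_{j+1}$ and $u_{j+1}\mapsto u_j$ (both legitimate elements of $\End(V,f)$ since the exponent conditions $\e(\eta u_j)\le t_j$ hold) yield exactly the two chains of inequalities defining $\mathcal{L}(\vec t\,)$, with the degenerate cases $r_j=t_j$ and $r_{j+1}\ge t_j$ handled as you indicate.
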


\medskip 

Let  $X \in   {\rm{Chinv}} (V,f)  $.
The  first part of Theorem~\ref{thm.wogn}  below  
deals with 
 the  hyperinvariant kernel  $X_H $ of $ X $.  In  \cite{Ba} 
the theorem   is used   to obtain  a description of the set
$   {\rm{Chinv}} (V,f)  \setminus  {\rm{Hinv}} (V,f) $.

\begin{theorem}  \label{thm.wogn} 
{\rm{\cite{AW2}}}  
Suppose  $X $ is a characteristic subspace of $V$.
Let  $ U = (u_1, \dots , u_m) \in \mathcal{U}$.
\begin{itemize} 
\item[\rm{(i)}] 
Then \,$
X_H  =  \oplus _{j=1} ^m \big( X \cap   \langle u_j \rangle  \big) $. 
 \item[\rm{(ii)}]
The subspace $X$ is hyperinvariant if and only if 
\beq  \label{eq.adin}
\pi_j X = X \cap \langle u_j \rangle \:\;  \mbox{for all} \:\;  j  \in \{1,  \dots, m \} . 
\eeq
\item[\rm{(iii)}]
If $ j \notin I_u$    
then 
\,$\pi_j X = X \cap \langle u_j \rangle $. 
If $ | I_u | \le 1 $, that is,  if $f$ has 
  at most one unrepeated elementary divisor,  then $X$ is hyperinvariant. 
\end{itemize}

\end{theorem}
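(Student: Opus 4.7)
I would prove the three parts in the stated order. The bulk of the argument is (i); parts (ii) and (iii) then follow relatively quickly.

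For (i), the inclusion $X_H\subseteq\oplus_{j=1}^m(X\cap\langle u_j\rangle)$ is immediate from Theorem~\ref{thm.fhl}(ii): since $X_H$ is hyperinvariant and contained in $X$, it decomposes as $\oplus_j f^{s_j}\langle u_j\rangle$ with each summand inside $X\cap\langle u_j\rangle$. For the reverse inclusion, each $X\cap\langle u_j\rangle$ is an $f$-invariant subspace of the cyclic space $\langle u_j\rangle$, hence of the form $f^{r_j}\langle u_j\rangle$ for some $r_j\in\{0,1,\dots,t_j\}$, and the sum is direct because $V=\oplus_j\langle u_j\rangle$ is. It then suffices to verify that $\vec r=(r_1,\dots,r_m)\in\mathcal L(\vec t\,)$; once this is done, $W(\vec r\,)=\oplus_j f^{r_j}\langle u_j\rangle$ is hyperinvariant by Theorem~\ref{thm.fhl} and, being contained in $X$, lies in $X_H$.

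The main obstacle is verifying the chain conditions \eqref{eq.zwrug}, and this is where the assumption that $X$ is characteristic enters. For each pair $i<j$ (so $t_i\le t_j$) I would use two $f$-commuting automorphisms: $\alpha_{ij}$ that fixes every generator except $u_j\mapsto u_j+u_i$, and $\beta_{ij}$ that fixes every generator except $u_i\mapsto u_i+f^{t_j-t_i}u_j$. Both are unitriangular on the $K[f]$-module basis $u_1,\dots,u_m$, hence invertible; $\beta_{ij}$ is well defined because $\e(u_i+f^{t_j-t_i}u_j)=t_i=\e(u_i)$. Applying $\alpha_{ij}$ to $f^{r_j}u_j\in X$ and subtracting $f^{r_j}u_j$ yields $f^{r_j}u_i\in X\cap\langle u_i\rangle=f^{r_i}\langle u_i\rangle$, forcing $r_i\le r_j$; applying $\beta_{ij}$ to $f^{r_i}u_i\in X$ and subtracting yields $f^{r_i+t_j-t_i}u_j\in f^{r_j}\langle u_j\rangle$, forcing $t_i-r_i\le t_j-r_j$. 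Degenerate cases (an element is zero, or $r_j\ge t_i$) give the inequalities trivially.

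Parts (ii) and (iii) are then short. For (ii), if $X$ is hyperinvariant then $X=X_H=\oplus_j(X\cap\langle u_j\rangle)$ by (i), so $\pi_j X=X\cap\langle u_j\rangle$; conversely, if $\pi_j X=X\cap\langle u_j\rangle$ for every $j$ then each $x\in X$ equals $\sum_j\pi_j x\in X_H$, whence $X=X_H$. For (iii), given $j\notin I_u$ I pick $k\ne j$ with $t_k=t_j$ and combine the $f$-commuting automorphisms $\sigma:u_j\mapsto u_j+u_k$ and $\tau:u_j\leftrightarrow u_k$ (both well defined because $t_j=t_k$): writing $\pi_j x=p_j(f)u_j$, the element $\sigma x-x=p_j(f)u_k$ lies in $X$, and then $\tau(p_j(f)u_k)=p_j(f)u_j=\pi_j x$ lies in $X\cap\langle u_j\rangle$. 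Finally, if $|I_u|\le 1$ at most one index $j_0$ is unrepeated; the preceding argument handles every $j\ne j_0$, and $\pi_{j_0}x=x-\sum_{j\ne j_0}\pi_j x$ then also lies in $X$, so (ii) gives hyperinvariance.
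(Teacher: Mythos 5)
Your proof is correct and complete. Note that the paper itself gives no proof of this theorem --- it is quoted from \cite{AW2} --- so there is nothing internal to compare against; but your argument is exactly in the spirit of the techniques the paper uses elsewhere. In particular, the two exchange automorphisms you introduce, $u_j\mapsto u_j+u_i$ and $u_i\mapsto u_i+f^{t_j-t_i}u_j$, are precisely the maps $\alpha(u_q,u_q+u_p)$ and $\alpha(u_p,u_p+f^{t_q-t_p}u_q)$ that the authors use in the proofs of Theorem~\ref{thm.hyhl} and Lemma~\ref{la.strug} to extract the two chains of inequalities defining $\mathcal{L}(\vec t\,)$, and your reduction of (i) to the membership $\vec r\in\mathcal{L}(\vec t\,)$ via Theorem~\ref{thm.fhl} is the natural route. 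The handling of the degenerate cases ($f^{r_j}u_j=0$, i.e.\ $r_j=t_j$, or $r_j\ge t_i$) is correctly flagged, and parts (ii) and (iii), including the swap-plus-shear argument for a repeated index and the ``complementary projection'' step for the single possibly unrepeated index, are sound.
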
 

\medskip 
The following observation is  related to  
  \eqref{eq.adin}.

\begin{lemma}  \label{la.vble}  
 Let $ U =  \big( u_j  \big) _{j=1} ^m \in \mathcal U $.
Suppose   $X \in \Inv (V, f) $. 
Then  
\beq \label{eq.dgne} 
X \cap  \langle u_j \rangle  =  \langle f^{r_j}  u_j \rangle 
\:\: \mbox{and} \:\:  \pi _j X =  \langle f^{\mu_j}  u_j \rangle 
 \:\:  \: \mbox{with}  \,  \:\:    0 \; \le\; \ \mu_j \; \le \;  r_j   \; \le \;  t_j .
\eeq 
\end{lemma}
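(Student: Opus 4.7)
The plan is to exploit the fact that $\langle u_j\rangle$ is an $f$-cyclic subspace of exponent $t_j$, so its lattice of $f$-invariant subspaces is totally ordered, and then to check that both $X\cap\langle u_j\rangle$ and $\pi_j X$ belong to this lattice.

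First I would recall the standard structure of a cyclic $f$-module: since $\e(u_j)=t_j$, every $f$-invariant subspace of $\langle u_j\rangle$ has the form $\langle f^k u_j\rangle$ with $0\le k\le t_j$, and these form a chain
\[
\{0\}=\langle f^{t_j}u_j\rangle\;\subsetneq\;\langle f^{t_j-1}u_j\rangle\;\subsetneq\;\cdots\;\subsetneq\;\langle f u_j\rangle\;\subsetneq\;\langle u_j\rangle.
\]
This is immediate from writing a general element of $\langle u_j\rangle$ as $p(f)u_j$ and letting $k$ be the smallest power of $f$ that appears with a nonzero coefficient.

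Next I would verify that $X\cap\langle u_j\rangle$ is $f$-invariant (intersection of $f$-invariant subspaces). Hence the chain above forces $X\cap\langle u_j\rangle=\langle f^{r_j}u_j\rangle$ for a unique $r_j$ with $0\le r_j\le t_j$. For the second identity I would use that $\pi_j\in\End(V,f)$, so $\pi_j X$ is $f$-invariant; moreover $\pi_j X\subseteq\IIm\pi_j=\langle u_j\rangle$. Applying the same chain gives $\pi_j X=\langle f^{\mu_j}u_j\rangle$ for some $0\le \mu_j\le t_j$.

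Finally I would establish $\mu_j\le r_j$ by proving the containment $X\cap\langle u_j\rangle\subseteq\pi_j X$. If $x\in X\cap\langle u_j\rangle$, then $\pi_j x=x$ because $\pi_j$ restricts to the identity on its image $\langle u_j\rangle$, and therefore $x=\pi_j x\in\pi_j X$. Translating $\langle f^{r_j}u_j\rangle\subseteq\langle f^{\mu_j}u_j\rangle$ through the chain yields $\mu_j\le r_j$. There is no genuine obstacle here; the only thing to be slightly careful about is making the inclusion $X\cap\langle u_j\rangle\subseteq\pi_j X$ explicit, since this is what pins down the ordering $\mu_j\le r_j$ that distinguishes this lemma from a pair of unrelated dimension statements.
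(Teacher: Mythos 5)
Your proof is correct and follows essentially the same route as the paper: both arguments reduce to the fact that the $f$-invariant subspaces of the cyclic space $\langle u_j\rangle$ form the chain $\langle f^s u_j\rangle$, $0\le s\le t_j$, combined with the inclusion $X\cap\langle u_j\rangle\subseteq\pi_j X$. You merely make explicit two points the paper leaves implicit, namely why $\pi_j X$ is invariant (via $\pi_j f=f\pi_j$) and why the inclusion holds ($\pi_j$ is the identity on $\langle u_j\rangle$).
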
 

\begin{proof} 
We have $ X \cap  \langle u_j \rangle \in \Inv (V, f) $,
and because of $ \pi _j f = f \pi _j $ we  also have $ \pi _ j X \in \Inv (V, f) $.
The invariant subspaces of  $ \langle u_j \rangle $ 
are   $ \langle f^{s}  u_j \rangle $, $ s = 0 , \dots , t_j$.
Hence      $X \cap  \langle u_j \rangle \subseteq  \pi _j X$ yields 
\eqref{eq.dgne}. 
 \end{proof} 

Let  $X \in   {\rm{Chinv}} (V,f)  $. 
Theorem~\ref{thm.hyhl} will show that    the numbers $r_j$ and $\mu_j$
in \eqref{eq.dgne} 
satisfy 
 $  r_j  -   \mu_j  \le   1$.  
We shall see that generators  $ u_j  \in U $
with 
\,$ \langle u_j \rangle \cap X  \;  \subsetneqq   \;  \pi _j X $\, 
require special attention.  
For that reason we associate to $X$ the set 
\[
J(X)  = \{ j ; \:    \langle u_j \rangle \cap X  \;   \subsetneqq   \;  \pi _j X \}. 
\]
We see  from 
Theorem~\ref{thm.wogn}(ii) that \,$X$\, is hyperinvariant if and only 
if  $ J(X) $ is empty. 
Moreover,  Theorem~\ref{thm.wogn}(iii) 
 implies  $J(X)  \subseteq I_u$.

\subsection{Generators and images under 
automorphisms}  \label{sbs.gen} 

In this section we derive an auxiliary result which we shall use
to determine the characteristic hull of subsets $B $ of $V$. 
Let $ U = (u_1, \ldots , u_m )   \,  \in \mathcal  U $ and $\alpha \in   {\rm{Aut}}(V,f) $.
Then $ \alpha U \in \mathcal  U $. 
On the other hand, if $ U^\prime =  ( u_1^\prime, \ldots , u_m^\prime  )\in \mathcal{U} $ then
a  mapping  $ \alpha : U \to  U^\prime $, 
$ \alpha :  u_j \mapsto u_j^\prime $,  $ j = 1, \dots, m $, 
can be extended  to a unique   $\alpha \in   {\rm{Aut}}(V,f) $. 
We first  note an  equivalent characterization of generators. 

\begin{lemma} {\rm{\cite[Lemma 2.6]{AW5} }}   \label{la.jtgn} 
Suppose $\lambda  ^t $ is an elementary divisor of $f$. 
Then $ x$ is a generator of \,$V$ with $ \e(x) = t $
if and only if  $f ^t x = 0 $ and 
\beq \label{eq.zmir} 
 h(x ) = 0 \quad and \quad 
 \h(f^{t-1}  x ) =  t-1 . 
\eeq
The condition \eqref{eq.zmir} 
is equivalent to   $\h (f^r x ) = r$, $ r= 0, 1, \dots , t -1$.
\end{lemma}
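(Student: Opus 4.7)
The plan is to dispose of two equivalences: first, that the pair of height conditions in \eqref{eq.zmir} is equivalent to the full chain $\h(f^rx)=r$ for $r=0,\ldots,t-1$; and second, the characterization of generators of exponent $t$. For the chain I would use the elementary inequality $\h(fy)\geq\h(y)+1$ whenever $fy\neq0$, which is immediate from $y\in f^qV\Rightarrow fy\in f^{q+1}V$. Starting from $\h(x)=0$, induction on $r$ yields $\h(f^rx)\geq r$ for each $r<t$; combined with $\h(f^{t-1}x)=t-1$, downward induction then forces equality throughout. The reverse implication is trivial.

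For the forward direction of the main equivalence I would use the defining decomposition $V=\langle x\rangle\oplus V_2$ with $V_2\in\Inv(V,f)$. Clearly $f^tx=0$. If $x=fz$ with $z=z_1+z_2$ split along the direct sum, then $x-fz_1=fz_2\in\langle x\rangle\cap V_2=0$, forcing $x=fz_1\in f\langle x\rangle$, which is absurd since $x$ has a nonzero coefficient at the basis vector $x$ of $\langle x\rangle$. A parallel argument handles $\h(f^{t-1}x)=t-1$: writing $f^{t-1}x=f^tz$ and splitting $z$, the $\langle x\rangle$-component of $f^tz$ vanishes because $f^t$ annihilates $\langle x\rangle$, so $f^{t-1}x\in V_2\cap\langle x\rangle=0$, contradicting $\e(x)=t$.

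The converse is the substantive direction. Given $f^tx=0$, $\h(x)=0$, and $\h(f^{t-1}x)=t-1$, I first note $\e(x)=t$ exactly, since $f^{t-1}x$ has finite height and is therefore nonzero. The key technical step I would carry out is to verify the purity relation
\[
f^rV\cap\langle x\rangle=f^r\langle x\rangle\qquad\text{for every }r\geq0.
\]
Given a nonzero $y\in f^rV\cap\langle x\rangle$, I expand $y=\sum_{j\geq k}a_jf^jx$ with $a_k\neq0$; if $\h(y)\geq k+1$, then $a_kf^kx=y-\sum_{j>k}a_jf^jx\in f^{k+1}V$, contradicting the chain value $\h(f^kx)=k$ from the first paragraph. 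Hence $\h(y)=k$, and $\h(y)\geq r$ forces $k\geq r$, so $y\in f^r\langle x\rangle$.

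The main obstacle is then passing from purity to the existence of an $f$-invariant complement. Here I plan to invoke the classical theorem---a pillar of the structure theory of finitely generated torsion modules over a principal ideal domain, or equivalently of finite abelian $p$-groups---that every pure cyclic submodule of bounded exponent is a direct summand. Concretely, I would select $V_2\in\Inv(V,f)$ maximal with the property $V_2\cap\langle x\rangle=0$ and show, via purity and a height-based element chase, that $V=\langle x\rangle+V_2$: any putative $v\in V\setminus(\langle x\rangle+V_2)$ can be adjusted modulo $\langle x\rangle+V_2$ to enlarge $V_2$ $f$-invariantly while preserving triviality of the intersection, violating maximality. This maximality step is the delicate one; everything else is bookkeeping.
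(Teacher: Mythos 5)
The paper does not prove this lemma --- it quotes it from \cite[Lemma 2.6]{AW5} --- so the natural benchmark is the decomposition-and-exchange route used in the neighbouring proofs here (e.g.\ that of Lemma~\ref{la.nurpd}): fix $V=\langle u_1\rangle\oplus\cdots\oplus\langle u_m\rangle$, write $x=\sum_j\pi_jx$, and observe that $f^tx=0$ together with $\h(f^{t-1}x)=t-1$ forces an index $j_0$ with $t_{j_0}=t$ and $\h(\pi_{j_0}x)=0$, after which $V=\langle x\rangle\oplus\bigoplus_{j\ne j_0}\langle u_j\rangle$ follows from a dimension count. Your route is genuinely different: you reduce everything to purity of $\langle x\rangle$, i.e.\ $f^rV\cap\langle x\rangle=f^r\langle x\rangle$, and then appeal to the classical fact that a pure bounded cyclic submodule is a direct summand. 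Your treatment of the height chain, the forward direction, and the purity computation are all correct, and the appeal is to a true theorem; if you simply cite it, the proof is complete.

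The gap is in your sketched proof of that last step. Taking $V_2\in\Inv(V,f)$ maximal with $V_2\cap\langle x\rangle=0$ and chasing an element $v\notin\langle x\rangle+V_2$ with $fv=y+w$, $y=\sum_{j\ge0}a_jf^jx\in\langle x\rangle$, $w\in V_2$, you need $y\in f\langle x\rangle$ (i.e.\ $a_0=0$) in order to replace $v$ by some $v'$ with $fv'\in V_2$. Applying $f^{t-1}$ gives only $a_0f^{t-1}x=f^tv-f^{t-1}w\in f^tV+V_2$, and purity of $\langle x\rangle$ in $V$ excludes $f^{t-1}x\in f^tV$ but says nothing about $f^tV+V_2$; so the chase does not close as described. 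The standard repair uses purity \emph{before} the maximality argument: since $f^tV\cap\langle x\rangle=f^t\langle x\rangle=0$ and $f^{t-1}x\notin f^tV$, pass to $V/f^tV$ (equivalently, take $V_2$ maximal subject to $f^tV\subseteq V_2$ and $V_2\cap\langle x\rangle=0$). There $f^t$ annihilates everything, so $0=a_0f^{t-1}x+f^{t-1}w$ forces $a_0f^{t-1}x\in\langle x\rangle\cap V_2=0$, hence $a_0=0$, and the rest of your argument goes through. Either insert this reduction, or avoid the issue entirely via the exchange argument above.
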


\medspace

Let  $ u_i \in U = (u_1, \ldots , u_m )  $  be an unrepeated generator. 
If  $ x \in V $ is a generator with $ \e(  x ) = \e (u_i ) $
then 
$ U ^\prime    =  (u_1, \ldots ,     u_{i -1} ,  x,    u_{i + 1} ,     \ldots ,      u_m )  
\in  \mathcal{U} $.
We say that the corresponding $f$-automorphism   $ \alpha :  U  \to U ^\prime $
exchanges  $ u_i $ by  $ x $, and we denote it by 
$\alpha ( u_i, x)$. 
The next lemma describes  the elements  $x$  that we  can choose for
 the replacement of an unrepeated  generator. 
Let $[x] = \{ \alpha x ; \, \alpha \in   {\rm{Aut}}(V,f)    \} $
 denote   the orbit of $x \in V $ under  ${\rm{Aut}}(V,f) $.

\begin{lemma} \label{la.nurpd}
Let $ U = (u_1, \ldots , u_m ) \in \mathcal{U} $ 
and suppose $ u_i \in U $ is unrepeated and $ \e(u_i) = t_i = t  $.
Then 
 $ x $ is a generator of  \,$V$ with  $ \e(x) = t $
if and only if  
\beq \label{eq.ngpl}
x=  u _i+ v + y  \quad with \quad  v \in  \langle   f  u  _i   \rangle 
\quad and \quad  y   \;   \in   \;  
\sum   \nolimits  _{ j =  1 , \,\, j \ne i} ^m \langle  u_j    \rangle [f ^t] . 
 \eeq 
Moreover 
\beq \label{eq.orb}
 [u_i]   =  u _i +    \langle   f  u  _i   \rangle  + \;
 \sum \nolimits _{ j = 1 , \, \, j \ne i  } ^m \langle  u_j    \rangle [f ^t] . 
\eeq 
\end{lemma}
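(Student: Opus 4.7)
The plan is to apply Lemma~\ref{la.jtgn} together with the decomposition \eqref{eq.vdeco}, using that because each $\langle u_j\rangle$ is $f$-invariant we have $f^q V = \bigoplus_j f^q \langle u_j \rangle$, so the height of any $x \in V$ equals the minimum of the heights of its components in the summands. I would prove the ``if and only if'' first and then read off \eqref{eq.orb} from it via the exchange construction $\alpha(u_i,x)$ recalled just before the lemma.

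For the ``if'' direction, write $x = u_i + v + y$ and decompose it in \eqref{eq.vdeco}: the $i$-th component is $u_i + v \in \langle u_i \rangle$, and for $j \ne i$ the $j$-th component is the piece $y_j \in \langle u_j \rangle [f^t]$ of $y$. Each of these is annihilated by $f^t$, so $f^t x = 0$. Since $u_i \notin fV$ while $v \in fV$, the $i$-th component has height $0$, hence $\h(x) = 0$. Because $u_i$ is unrepeated, the sorting \eqref{eq.teug} forces $t_j < t$ for $j < i$ and $t_j > t$ for $j > i$. Thus $f^{t-1} y_j = 0$ when $j < i$; when $j > i$ the identity $\langle u_j \rangle [f^t] = \langle f^{t_j - t} u_j \rangle$ puts $f^{t-1} y_j$ in $f^{t_j - 1} \langle u_j \rangle$, which has height $\ge t_j - 1 \ge t$. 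Meanwhile $f^{t-1}(u_i + v) = f^{t-1} u_i$ has height exactly $t - 1$. The component-wise minimum yields $\h(f^{t-1} x) = t - 1$, and Lemma~\ref{la.jtgn} gives that $x$ is a generator with $\e(x) = t$.

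For the ``only if'' direction, assume $x$ is a generator of exponent $t$ and expand $x = \sum_j x_j$ in \eqref{eq.vdeco}. From $f^t x = 0$ one gets $x_j \in \langle u_j \rangle [f^t]$ for every $j$, which supplies the desired $y$ at once. For $j = i$ one has $\langle u_i \rangle [f^t] = \langle u_i \rangle$, so $x_i = a_0 u_i + a_1 f u_i + \cdots + a_{t-1} f^{t-1} u_i$. If $a_0 = 0$ then $x_i \in f V$, and the same component-wise analysis as above shows $f^{t-1} x_j = 0$ for $j \le i$ while $f^{t-1} x_j$ has height $\ge t_j - 1 \ge t$ for $j > i$, forcing $\h(f^{t-1} x) \ge t$. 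This contradicts Lemma~\ref{la.jtgn} applied to the generator $x$ of exponent $t$. Hence $a_0 \ne 0$; since $K = GF(2)$ this means $a_0 = 1$, and $x_i = u_i + v$ with $v = a_1 f u_i + \cdots + a_{t-1} f^{t-1} u_i \in \langle f u_i \rangle$.

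For \eqref{eq.orb}, every $\alpha \in \Aut(V,f)$ sends $u_i$ to a generator of the same exponent $t$, so by the characterization just proved $\alpha u_i$ lies in the claimed set. Conversely, any $x = u_i + v + y$ in that set is a generator with $\e(x) = \e(u_i)$, and because $u_i$ is unrepeated the exchange construction recalled just before the lemma yields $\alpha(u_i, x) \in \Aut(V,f)$ sending $u_i \mapsto x$, so $x \in [u_i]$. The main technical obstacle I anticipate is the height bookkeeping for $f^{t-1} y_j$ (and $f^{t-1} x_j$) when $j > i$; this is exactly where the unrepeated hypothesis $t_j > t$ is needed, in order to guarantee $t_j - 1 \ge t > t - 1$.
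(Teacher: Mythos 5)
Your proof is correct and follows essentially the same route as the paper: both rest on the height characterization of generators in Lemma~\ref{la.jtgn}, the decomposition of $x$ along \eqref{eq.vdeco} (the paper merely groups the summands with $j<i$ and $j>i$ into two blocks), and the observation that unrepeatedness forces $t_j<t$ for $j<i$ and $t_j>t$ for $j>i$ so that the $f^{t-1}$-images of those components vanish or have height at least $t$. Your write-up is somewhat more explicit (in particular in verifying the ``if'' direction and in deducing the orbit formula via the exchange automorphism), but there is no substantive difference in method.
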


\begin{proof} 
If \eqref{eq.ngpl} holds then $x$ satisfies  $ \e(x) = t $ and \eqref{eq.zmir}. 
Hence $x$ is a generator.  
Set  $ L^{\langle i \rangle}=  \langle u_1, \dots , u_{i-1}  \rangle $ and 
$G^{\langle i \rangle}   =  \langle u_{i+1} , \dots , u_m  \rangle $.  
Then  
\beq  \label{eq.rdem} 
 V =  L^{\langle i \rangle}  \oplus  
 \langle    u _i   \rangle  \oplus G^{\langle i \rangle} , 
\eeq 
and   
$V [ f^t]  =   
  L^{\langle i \rangle}  \oplus   \langle    u _i   \rangle  \oplus G^{\langle i \rangle} [ f^t] $. 
Let \,$ x = x_L + x_i + x_G   \in V $\, 
 be decomposed in accordance with \eqref{eq.rdem}.  
If $f^t x = 0 $ then  $  x_G  \in   G^{\langle i \rangle} [ f^t] $.  Moreover, if 
$x _ G \ne 0 $,  then 
$ \langle  u_j \rangle  [ f^t]  = \langle  f^{t_j - t }  u_j \rangle $  and $ \e(u_j) > t $
yield  $\h( x_G  ) \ge 1 $.  
Now suppose $ \e(x) = t $   and \eqref{eq.zmir}.  
Then 
 $\h(x ) = 0 $  and  $\h( x_G  ) \ge 1 $ 
imply $\h( x_L + x_i  ) = 0 $.  
From $ f^{t-1}  x_L  = 0 $ and 
$ \h( f^{t-1}   x) = t-1 $  we obtain  $ \h(x_i) = 0 $, 
that is, $ x_i  = u_i + v $, $ v \in   \langle    f  u _i   \rangle $.
It follows from \eqref{eq.ngpl} that $[u_i] $ is a linear manifold of the
form \eqref{eq.orb}. 
\end{proof}

In the course of our paper we shall frequently 
illustrate our results
by a running example.  For that purpose we always 
 use a  vector space $V$ of dimension $10$ and an endormorphism
 $f$  of $V$
with elementary divisors $ \lambda, \lambda^3,  \lambda^6$
such that 
\beq  \label{eq.vedr} 
V = \langle u_1 \rangle \oplus  \langle u_2 \rangle \oplus   \langle u_3 \rangle
\;\; \text{and} \;\;  \big( \!  \e(u_1) ,  \e(u_2)  ,  \e(u_3) \big)  = ( 1, 3, 6) .
\eeq 
 In the following example we 
apply Lemma~\ref{la.nurpd} to determine the characteristic hull of
subspaces. 

\medskip 

\begin{example} \label{ex.btw}
{\rm{
Let  $(V,f) $ be given by \eqref{eq.vedr}. 
We consider two subspaces, namely  
\beq  \label{eq.dex} 
G= \langle z \rangle^c \quad {\rm{with}} \quad  z = u_1 +f u_2 + f^2 u_3,
\eeq
and 
\beq   \label{eq.dey} 
F =  \langle w_1, w_2 \rangle ^c
 \quad {\rm{with}} \quad 
w_1 = u_1 + fu_2,  \:w_2 = fu_2 + f^2 u_3.
\eeq 
We have 
\begin{gather}  \label{eq.gth} 
\begin{aligned} 
  \left[u_1\right] & = u_1
+   \langle u_2  \rangle [ f] +  \langle  u_3  \rangle [ f]  = u_1 +   \langle f^2  u_2  \rangle
 + 
\langle  f^5  u_3  \rangle , 
\\
 [ u_2]    &    = u_2 +   \langle  fu_2  \rangle  +   \langle u_1  \rangle  +  
\langle  u_3  \rangle [ f^3] 
=
u_2 +   \langle  fu_2  \rangle  +   \langle u_1  \rangle  +  \langle f^3  u_3  \rangle
, 
\\ 
[ u_3] &    =
u_3 +   \langle f  u_3  \rangle
+ 
  \langle u_1  \rangle  +   \langle u_2  \rangle, 
\end{aligned} 
\end{gather} 
and     
\begin{gather}  \label{eq.agl} 
\begin{aligned}   
\left[ f u _2  \right]  &  =  f  u_2  +  \langle  f ^2 u_2  \rangle  
+   \langle f^4  u_3  \rangle ,  
\\
 [  f^2 u_3   ]   &  =  f^ 2  u_3 +  \langle f^3  u_3  \rangle
+ 
 \langle  f^ 2  u_2  \rangle .
\end{aligned} 
\end{gather}  
Hence 
\,$
[ z]  =  z + \langle  f ^2 u_2 ,  f^3  u_3   \rangle $\, 
and 
\[
G = \langle z \rangle^c  = \langle z,   f ^2 u_2 ,  f^3  u_3   \rangle . 
\]
From \eqref{eq.gth}  and  \eqref{eq.agl}   we obtain
\[
\left[ w_1 \right] = 
 u_1 +   \langle f^2  u_2  \rangle  + \langle  f^5  u_3  \rangle  +  f u_2  + 
  \langle  f ^2 u_2  \rangle  
+   \langle f^4  u_3  \rangle
= w_1 +   \langle   f^2  u_2  ,   f^4  u_3  \rangle  
\]
and 
\begin{multline*} 
\left[ w_2\right]   =  
 f   u_2 + 
 \langle  f ^2 u_2  \rangle  
+   \langle f^4  u_3  \rangle +  f^ 2  u_3 +  \langle f^3  u_3  \rangle
+ 
 \langle  f^ 2  u_2  \rangle   = 
\\  w_2  + \langle  f ^2 u_2  \rangle   +  \langle f^3  u_3  \rangle , 
 \end{multline*}  
\[
F = 
\langle w_1, w_2 \rangle ^c = \langle w_1, w_ 2  ,   f^2  u_2  ,   f^3  u_3  \rangle 
= \langle w_1, w_ 2   \rangle  . 
\]
Let $ Q \in \{G, F\} $.  Then 
$ Q \cap    \langle u_1  \rangle  = 0 $, $ Q \cap    \langle u_2  \rangle  = \langle f^2 u_2  \rangle $,
and 
$ Q \cap    \langle u_3  \rangle  =  \langle   f^3  u_3   \rangle $.
Thus
\beq \label{eq.xyhr} 
G_H = F_H =    \langle  f^2 u_2 ,    f^3  u_3      \rangle  = W (1, 2, 3) . 
\eeq 
We have  $\pi _1 z  = u_1 \notin G $.  Therefore  $ \pi_1 \in  {\rm{End}}(V,f) $
implies that  the characteristic subspace 
$G $ is not hyperinvariant.
Similarly  we conclude from $ \pi _1 w_1 = u_1  \notin F$  that
 $F $ is   not hyperinvariant. 
}}
\end{example}



\section{Frames}  \label{sbs.frm} 

In  this  section we  consider the 
hyperinvariant frame  $(X_H, X^h) $ 
 of  a characteristic subspace $X$ and we 
describe the connection between 
 $ X_H$   and  $X^h $.

\begin{theorem} \label{thm.hyhl} 
Let  $X $ be a characteristic subspace of $V$ and let the
numbers $ r_j, \mu_j $, $ j = 1, \dots , m $, be given by 
\beq 
\label{eq.ngws}  
\langle u_j \rangle \cap X   =       \langle f^{r_j} u_j \rangle   \:  \: 
\mbox{and} \: \:  \:     \pi_j X = \langle f^{\mu_j}u_j \rangle  ,    \: \: 
 j = 1, \dots, m   .
\eeq 
\begin{itemize}
\item[{\rm{(i)}}] 
Then 
\beq \label{eq.mrjo} 
\mu_j  = 
 \begin{cases} 
r_j   &  \mbox{if}   \quad   \langle u_j\rangle \cap X   =   \pi_jX 
\\
r_j- 1 &  \mbox{if}  \quad  \langle u_j \rangle \cap X    \subsetneqq  \pi_jX  .
\end{cases} 
\eeq
\item[{\rm{(ii)}}] 
The hyperinvariant hull of $X$ is  
\beq \label{eq.nrpix}
X ^h    =  \sum \nolimits _{j = 1} ^m  \pi _j X 
=  \langle   f^{\mu_1} u_1 , \dots,   f^{\mu_m} u_m \rangle . 
\eeq  
\end{itemize} 
\end{theorem}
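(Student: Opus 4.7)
My plan is to handle both parts by the same trick: pick an element $x\in X$ with a prescribed projection onto some $\langle u_j\rangle$, construct an explicit $\alpha\in\mathrm{Aut}(V,f)$ that fixes most of the decomposition \eqref{eq.vdeco} and modifies just one generator, and then use characteristicity of $X$ to conclude $\alpha x-x\in X$. The difference will always be a single cyclic vector $f^s u_i$, forcing a comparison between the $\mu$'s and the $r$'s or between different $\mu$'s.

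For part (i), since $\pi_j$ restricts to the identity on $\langle u_j\rangle$, we have $X\cap\langle u_j\rangle\subseteq\pi_j X$ and thus $\mu_j\le r_j$. When the containment is proper, pick $x\in X$ with $\pi_j x=f^{\mu_j}u_j$, so $x=f^{\mu_j}u_j+y$ with $y\in\sum_{i\ne j}\langle u_i\rangle$. Since $f$ is nilpotent, $1+f$ is invertible, so the $K[f]$-linear map defined by $\alpha(u_j)=(1+f)u_j$ and $\alpha(u_i)=u_i$ for $i\ne j$ lies in $\mathrm{Aut}(V,f)$. Then $\alpha x-x=f^{\mu_j+1}u_j$, and characteristicity forces $f^{\mu_j+1}u_j\in X\cap\langle u_j\rangle=\langle f^{r_j}u_j\rangle$. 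Combined with $\mu_j<r_j$, this yields $\mu_j=r_j-1$.

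For part (ii), the chain $X\subseteq\sum_j\pi_j X\subseteq X^h$ is immediate: the left inclusion from $x=\sum_j\pi_j x$, the right from $\pi_j\in\mathrm{End}(V,f)$ together with hyperinvariance of $X^h$. It remains to show that $\sum_j\pi_j X=\sum_j\langle f^{\mu_j}u_j\rangle$ is itself hyperinvariant, whence \eqref{eq.nrpix} follows. By Theorem~\ref{thm.fhl} this reduces to verifying $\vec\mu\in\mathcal{L}(\vec t\,)$, that is, the two monotonicities $\mu_j\le\mu_k$ and $t_j-\mu_j\le t_k-\mu_k$ for $j<k$.

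Fix $j<k$, so $t_j\le t_k$. For $\mu_j\le\mu_k$, pick $x\in X$ with $\pi_k x=f^{\mu_k}u_k$ and use the automorphism $\alpha(u_k)=u_k+u_j$, $\alpha(u_i)=u_i$ otherwise; it lies in $\mathrm{Aut}(V,f)$ because $\e(u_k+u_j)=t_k$ so \eqref{eq.vdeco} is preserved. Then $\alpha x-x=f^{\mu_k}u_j\in X$ projects into $\pi_j X=\langle f^{\mu_j}u_j\rangle$, giving $\mu_j\le\mu_k$. For $t_j-\mu_j\le t_k-\mu_k$, pick instead $x\in X$ with $\pi_j x=f^{\mu_j}u_j$ and use $\alpha(u_j)=u_j+f^{t_k-t_j}u_k$, fixing the rest; this is valid because $\e(u_j+f^{t_k-t_j}u_k)=t_j$. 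The same trick yields $f^{\mu_j+t_k-t_j}u_k\in\pi_k X=\langle f^{\mu_k}u_k\rangle$, which is the second monotonicity. The main obstacle is simply spotting these three automorphisms—each is designed so that characteristicity simulates the endomorphism that hyperinvariance would otherwise provide—and checking that each belongs to $\mathrm{Aut}(V,f)$ is routine exponent-and-dimension bookkeeping for the direct sum \eqref{eq.vdeco}.
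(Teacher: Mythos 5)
Your proof is correct. Part (i) is essentially identical to the paper's argument: the same choice of $x$ with $\pi_j x=f^{\mu_j}u_j$ and the same automorphism $\alpha(u_j,u_j+fu_j)$. Part (ii), however, takes a genuinely different route. The paper sets $\tilde X=\sum_j\pi_jX$ and proves directly that $\tilde X$ is \emph{characteristic} — using Lemma~\ref{la.nurpd} to write $\alpha u_j=u_j+y$ for an unrepeated $u_j$ and an exchange automorphism $\beta(u_j,u_j+y)$ to show $f^{\mu_j}y\in X$ — and then upgrades to hyperinvariance via Theorem~\ref{thm.wogn}(ii). You instead verify the lattice condition $\vec\mu\in\mathcal{L}(\vec t\,)$ by two exchange automorphisms ($u_k\mapsto u_k+u_j$ and $u_j\mapsto u_j+f^{t_k-t_j}u_k$) and appeal to the Fillmore--Herrero--Longstaff classification, Theorem~\ref{thm.fhl}. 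Your two inequalities are exactly the non-strict analogues of Lemma~\ref{la.strug}(i)--(ii), where the paper deploys the very same automorphisms for a different purpose; so the technique is native to the paper, just relocated. Your route is somewhat shorter and delivers $X^h=W(\vec\mu)$ with $\vec\mu\in\mathcal{L}(\vec t\,)$ in one stroke (a fact the paper only records afterwards in Corollary~\ref{cor.vlrg}), at the cost of a pairwise case analysis; the paper's route identifies $\tilde X$ as characteristic first, which it reuses conceptually later. One small point worth adding to your write-up: when $\pi_kX=0$ (i.e.\ $\mu_k=t_k$) the element $x$ with $\pi_kx=f^{\mu_k}u_k=0$ yields no information, but then $\mu_j\le t_j\le t_k=\mu_k$ holds trivially, and similarly $t_j-\mu_j=0\le t_k-\mu_k$ when $\mu_j=t_j$; so the degenerate cases cause no harm.
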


\begin{proof} 
(i) 
If    $ X \cap \langle u_j\rangle =\pi_j X  $ 
then  $\mu_j = r_j$. 
Now suppose  $ X \cap \langle u_j\rangle  \subsetneqq \pi_j X $.
Then 
$    \langle f^{r_j}u_j \rangle     \subsetneqq \langle     f^{\mu_j}u_j \rangle  $
implies  $ r_j  > \mu _j $, and therefore $ r_j \ge 1$.
Because of $  f^{\mu_j}u_j  \in \pi_j X $ we 
can  choose   an element $x \in X $ such that  
\[
 x = \sum \nolimits _{i=1} ^m   x _i , \:\;   
x_i \in \langle u_ i \rangle, \:  i=1, \dots, m,  \quad {\rm{and}} \quad 
  x_j =  \pi_j x = f^{\mu_j}u_j .
\] 
Let $\alpha = \alpha ( u_j , u_j + fu_j )$.  
Then  $\alpha x = x + f^{\mu_j+1}u_j$.
By assumption  the subspace  $X$ is characteristic. 
Therefore  $ \alpha x \in X$.
 Hence $f^{\mu_j+1} u_j\in X \cap \langle u_j \rangle = \langle f^{r_j}u_j \rangle$.
This  implies $\mu_j +1 \geq r_j$, and yields   $\mu_j = r_j -1$,
and completes the proof of  \eqref{eq.mrjo}.

(ii) 
Set 
$   \tilde X = \sum  _{j=1} ^m \, \pi _j X$. 
Then   $ X \subseteq   \tilde  X $ 
and  $ \pi _j  \tilde X =  \pi _j  X  $, $j = 1, \dots , m $.
Hence 
\beq \label{eq,stwu}
  \tilde X = \sum \nolimits  _{j=1} ^m \, \pi _j  \tilde X =  
\sum  \nolimits   _{j=1} ^m \,   \big \langle   f^ {\mu _j } u_j  \big\rangle  .
\eeq  
Let us show that $  \tilde X  $ is hyperinvariant. 
We first prove that $  \tilde X  $ is characteristic.
We consider the generators $  f^{\mu_j } u_j $ of $  \tilde X $. 
Let $\alpha \in {\rm Aut}(V,f)$. 
If  $ \langle u_j\rangle \cap X   =   \pi_jX$  
then  
\[
 \langle  f^{\mu_j}u_j \rangle =  \langle  f^{r_j}u_j \rangle  = 
  \langle u_j\rangle \cap X
 \subseteq X ,\]  
and therefore  $\alpha( f^{\mu_j}u_j  )\in X \subseteq  \tilde{X}$.
If  $   \langle u_j\rangle \cap X   \subsetneqq  \pi_jX$, 
then 
$ u _j $  is unrepeated. 
Hence  $\alpha u_j = u_j + y$ with
$y \in  \langle u_1, \dots, u_{j-1} , f u_j,  u_{j+1} , \dots , u_m \rangle $,
and   $  u_j + y   $ is a generator with $ \e( u_j + y ) =  \e( u_j) $. 
Then
\beq \label{eq.smjy} 
\alpha ( f^{\mu_j}  u_j )  = f^{\mu_j}u_j + f^{\mu_j}y. 
\eeq 
Let us show that $f^{\mu_j}y \in \tilde{X}$.  
From \eqref{eq,stwu} follows  $  f^{\mu_j}u_j   \in  \pi _j \tilde X  =  \pi _j X  $.
Hence  $  f^{\mu_j}u_j  = \pi _j x $ for some $x \in X$. 
Then  
$ x = x_1 + \cdots + x_m$  with $x_i\in \langle u_i \rangle, i=1, \dots, m$,  and 
$x_j = f^{\mu_j}u_j$. 
Let
 $\beta \in {\rm Aut}(V,f)$  be the automorphism  that exchanges 
$ u_j $  by $ u_j + y $.  
Then  $\beta x = x + f^{\mu_j}y \in X$, which implies $ f^{\mu_j}y \in X \subseteq \tilde{X}$. 
We have 
 $  f^{\mu_j}u_j \in \tilde X $.     
Hence 
\eqref{eq.smjy} 
yields  $\alpha( f^{\mu_j}u_j ) \in \tilde{X} $.
Thus we have shown that  $  \tilde X  $ is  characteristic.
It follows from  Theorem~\ref{thm.wogn}(ii)   that $  \tilde X  $ is 
hyperinvariant. 
Then $ X \subseteq \tilde{X}  $  implies 
$  X^h  \subseteq \tilde{X}  ^h  =  \tilde{X} $. 
On the other hand 
we obtain  $ \tilde{X}  \subseteq X^h $, since  $  X^h $ is the hyperinvariant hull of $X$.
Therefore  $  \tilde{X}   = X^h$.
\end{proof}

Let  \,$ \vec e_1 = (1, 0 , \dots , 0 ) ,  \; \dots,  \vec e_m = (0 , \dots , 0, 1 ) $\,
be the row vectors of size~$m$. 
We combine the preceding theorem with Theorem~\ref{thm.fhl}  and 
  Theorem~\ref{thm.wogn}.

\begin{corollary} \label{cor.vlrg} 
Let $X $ be a characteristic subspace of \,$V$ 
and let the integers $r_j$, $\mu _j$, $ j=1, \dots , m $ 
be defined by \eqref{eq.ngws}. 
Set $\vec r = (r_1, \dots , r_m) $ and $\vec \mu =  (\mu_1, \dots , \mu_m)$. 
\begin{itemize}
\item[{\rm{(i)}}] 
The hyperinvariant frame $ (X_H, X^h ) $ of $X$  consists of 
\beq \label{eq.njrp}
X _H     =  \sum \nolimits _{j = 1} ^m  \big( \langle u_j  \rangle \cap X   \big) = 
 \langle   f^{r_1} u_1 , \dots,   f^{r_m} u_m \rangle  = W (\vec r) , 
\eeq
and 
\[
X ^h    =  \sum  \nolimits _{j = 1} ^m  \pi _j X 
=  \langle   f^{\mu_1} u_1 , \dots,   f^{\mu_m} u_m \rangle  = W(\vec \mu).
\]
If $J(X) =  \{ i_1 ,  \dots , i_k\}  \subset I_u $,  $i_1 < \cdots < i_k$, 
then  
\beq \label{eq.rplmu} 
\vec r = \vec \mu +  \sum \nolimits  _{s = 1} ^k \vec e_{i_s}
 \quad and  \quad
 \vec r , \vec \mu  \in \mathcal{L}( \vec t \, ) .   
\eeq 
\item[{\rm{(ii)}}] 
Let   \,$  D (X) = {\rm{span}}\{ f^{\mu_i} u_i;     \:  i \in J(X)  \}  $.   
Then \,$ X^h = X_H \oplus D(X)  $.
The subspace $X$ is  hyperinvariant if and only if $D(X)  =  0 $.  
\end{itemize} 
\end{corollary}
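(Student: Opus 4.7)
The plan is to prove the corollary by assembling the earlier results of Theorem~\ref{thm.wogn}, Theorem~\ref{thm.fhl}, and Theorem~\ref{thm.hyhl}; essentially no new content is needed, only careful bookkeeping. For part~(i), I would start from Theorem~\ref{thm.wogn}(i), which gives $X_H = \oplus_{j=1}^m (X \cap \langle u_j \rangle)$, and then rewrite each summand via Lemma~\ref{la.vble} and \eqref{eq.ngws} as $\langle f^{r_j} u_j \rangle$. To identify this with $W(\vec r\,)$ through Theorem~\ref{thm.fhl}(ii), I would argue that since $X_H$ is hyperinvariant it equals $W(\vec s\,)$ for some $\vec s \in \mathcal{L}(\vec t\,)$, and the uniqueness of the decomposition along $V = \oplus_j \langle u_j\rangle$ forces $\vec s = \vec r$; in particular this automatically yields $\vec r \in \mathcal{L}(\vec t\,)$. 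The analogous statement for $X^h$ and $\vec\mu$ is obtained the same way from Theorem~\ref{thm.hyhl}(ii).

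For the relation $\vec r = \vec \mu + \sum_{s=1}^k \vec e_{i_s}$, I would simply invoke Theorem~\ref{thm.hyhl}(i): by \eqref{eq.mrjo} the difference $r_j - \mu_j$ is $1$ exactly when $\langle u_j\rangle \cap X \subsetneqq \pi_j X$, i.e.\ when $j \in J(X)$, and $0$ otherwise.

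For part~(ii), the strategy is to use the coordinate-wise splitting along $V = \oplus_j \langle u_j\rangle$ inherited by both members of the frame, namely
\[
X_H = \bigoplus\nolimits_{j=1}^m f^{r_j}\langle u_j\rangle \qquad \text{and} \qquad X^h = \bigoplus\nolimits_{j=1}^m f^{\mu_j}\langle u_j\rangle .
\]
For $j \notin J(X)$ the two summands coincide by \eqref{eq.mrjo}, while for $j \in J(X)$ we have $r_j = \mu_j + 1$, so $f^{\mu_j}\langle u_j\rangle$ strictly contains $f^{r_j}\langle u_j\rangle$ as a codimension-one subspace with $f^{\mu_j} u_j$ as coset representative. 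Summing over $j$ gives $X^h = X_H + D(X)$. The sum is direct: any element of $D(X) \cap X_H$ would have, in its $\langle u_i\rangle$-component for each $i \in J(X)$, both a scalar multiple of $f^{\mu_i}u_i$ and an element of $\langle f^{\mu_i + 1}u_i\rangle$, forcing the scalar to vanish. The equivalence ``$X$ hyperinvariant iff $D(X) = 0$'' then reduces to ``$J(X) = \emptyset$'', which is precisely Theorem~\ref{thm.wogn}(ii).

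No step looks difficult. The one mildly delicate point will be keeping the identifications between tuples in $\mathcal{L}(\vec t\,)$ and subspaces of the form $W(\vec r\,)$ consistent, and making sure the direct-sum argument for $X^h = X_H \oplus D(X)$ is carried out relative to the ambient splitting $V = \oplus_j \langle u_j\rangle$ rather than computed inside $X^h$ alone.
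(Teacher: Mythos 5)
Your proposal is correct and follows the same route the paper intends: the paper states this corollary without a separate proof, noting only that it is obtained by combining Theorem~\ref{thm.hyhl} with Theorem~\ref{thm.fhl} and Theorem~\ref{thm.wogn}, which is exactly the assembly you carry out (including the correct use of Theorem~\ref{thm.fhl} to get $\vec r,\vec\mu\in\mathcal{L}(\vec t\,)$ and of \eqref{eq.mrjo} for the relation $\vec r=\vec\mu+\sum_{s}\vec e_{i_s}$). Your directness argument for $X^h=X_H\oplus D(X)$ along the splitting $V=\oplus_j\langle u_j\rangle$ is also sound, since $\mu_i<t_i$ for $i\in J(X)$ guarantees $f^{\mu_i}u_i\notin\langle f^{\mu_i+1}u_i\rangle$.
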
 

\medskip

We extend  Example~\ref{ex.btw}  taking into account the results of the
the preceding corollary.

\begin{example}  \label{ex.wct} 
{\rm{
Let $(V,f) $ be given by  \eqref{eq.vedr}. 
Let the subspaces 
$G$ and $F$  be defined by  \eqref{eq.dex}    and  \eqref{eq.dey}, respectively. 
Then  
\[ 
 G= \langle  u_1 +f u_2 + f^2 u_3,  f ^2 u_2  ,   f^3 u_3  \rangle 
\quad \text{and} \quad 
F =   
\langle u_1 + fu_2 , fu_2 + f^2 u_3 \rangle 
\]
yield 
 \[ G^h  =     F ^h  =  \langle u_1 ,  f u_2  ,   f^2u_3 \rangle =  W( \vec \mu )
\:  \: \: \text{with} \:  \:  \:\vec \mu = (0, 1, 2)
\] 
and $ \dim W( \vec \mu )   = 7$. 
Recall \eqref{eq.xyhr}, that is 
\[
G_H = F_H =   \langle f^2  u_2,  f^3 u_3 \rangle =   W(\vec r) 
\:\:\: \text{with} \:\:\:  \vec r = (1,2,3) , 
\] 
and $\dim  W(\vec r)  = 4 $. 
Hence $ \big( W( 1,2,3) , W( 0, 1,2 ) \big) $ is the hyperinvariant frame
for both $G$ and $F$.
If  $Q \in \{G, F\} $ then 
\,$ \langle u_j \rangle \cap Q   \subsetneqq   \pi _j Q$,  $j = 1,2,3 $, 
implies $  J(Q) = \{ 1,2,3  \} $.
Then 
\[
\sum \nolimits _{\kappa \in  J(Q) }  \vec e _\kappa = (1,1,1) ,
\]
and  the  relation 
\,$
 \vec r  =  \vec \mu  + \sum \nolimits  _{\kappa \in  J(Q) } \,  \vec e _\kappa $\, 
in \eqref{eq.rplmu} is satisfied. 
Moreover,  $ D (Q) = {\rm{span}}\{ u_1,    f  u_ 2,    f ^2 u_3 \}   $ such that 
$Q^h = Q_H \oplus D$. 
}} 
\end{example}

When is a pair $ \big(  W(\vec r),  W(\vec \mu) \big)  $ 
 the hyperinvariant frame of a subspace
 $X   \in {\rm{Chinv}}(V) \! \setminus \!  {\rm{Hinv}} (V,f)   $?
From Lemma~\ref{la.strug}  
below we obtain necessary conditions 
that involve the set $J(X)  = 
\{ j ; \:    \langle u_j \rangle \cap X   \subsetneqq   \pi _j X \}$. 
The strict inequality \eqref{eq.smpt} below can be interpreted in the
view of Shoda's theorem. 
It follows from  \eqref{eq.smpt}  that 
 a given $f$ can  give rise to a characteristic subspace that
is not hyperinvariant   only if $f$ has  unrepeated elementary divisors 
$ \lambda ^R $ and $ \lambda ^S $ such that the integers 
 $R$ and $S$ are
not consecutive.

\medskip 

\begin{lemma} \label{la.strug}  
Suppose 
$X$ is  characteristic  and not hyperinvariant with 
\[
 X^h  = W(\mu_1, \dots, \mu_m)  \quad \text{and} \quad 
 X_H   = W(r_1, \dots , r_m). 
\] 
Then  \,$  |  J(X) | \ge 2 $ and $\vec \mu $ has the following properties. 
\begin{itemize}
\item[{\rm{(i)}}] 
If  \,$  p    \in J(X) $ then 
\beq \label{eq.ugbl} 
   \mu_ p < \mu_q   \:  \:  \,   \mbox{if}  
\:  \:  \,   p < q .
\eeq 
\item[{\rm{(ii)}}]
If \,$  q   \in J(X) $  then  
$0 <  t_q - \mu_q $ 
and 
\beq \label{eq.ugbl2} 
t_p -   \mu_ p < t_q - \mu_q  \: \: \: \mbox{if} \: \:     
\:  \,   p < q .
\eeq 
\item[{\rm{(iii)}}] 
If  \,$  p, q   \in J(X)  $\,  then  
\beq \label{eq.smpt} 
t_p +1 < t_q  \: \,   
\mbox{if}   \:  \,\,     p < q . 
\eeq
\end{itemize}
\end{lemma}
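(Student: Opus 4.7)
The overall plan is to use Corollary~\ref{cor.vlrg}, which records $r_j = \mu_j$ for $j \notin J(X)$, $r_j = \mu_j + 1$ for $j \in J(X)$, and $\vec r, \vec \mu \in \mathcal{L}(\vec t \, )$. For each claimed strict inequality I split on whether the second index lies in $J(X)$: the easy case is forced by \eqref{eq.zwrug} applied to $\vec r$, and the remaining case is settled by a single generator exchange furnished by Lemma~\ref{la.nurpd} applied to a well-chosen $x \in X$. I would prove the four statements in the order $|J(X)|\geq 2$, (i), (ii), (iii).

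For $|J(X)| \geq 2$, non-hyperinvariance of $X$ gives $J(X) \neq \emptyset$ by Theorem~\ref{thm.wogn}(ii), so I assume $J(X) = \{p\}$. Pick $x \in X$ with $\pi_p x = f^{\mu_p} u_p$. For every $j \neq p$ the identity $\pi_j X = \langle u_j \rangle \cap X$ gives $\pi_j x \in X$, hence $\pi_p x = x - \sum_{j \neq p} \pi_j x \in X$, which forces $f^{\mu_p} u_p \in X \cap \langle u_p \rangle = \langle f^{\mu_p + 1} u_p \rangle$ --- impossible since $f^{\mu_p} u_p$ has exponent $t_p - \mu_p$ while the right side consists of vectors of exponent at most $t_p - \mu_p - 1$.

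For (i) with $p \in J(X)$ and $p < q$, $\mu_p \leq \mu_q$ is free from $\vec \mu \in \mathcal{L}(\vec t \, )$. If $q \notin J(X)$ then $\mu_p + 1 = r_p \leq r_q = \mu_q$. If $q \in J(X)$ then $u_q$ is unrepeated, $t_p < t_q$, and the exchange $\beta = \alpha(u_q, u_q + u_p)$ is validated by Lemma~\ref{la.nurpd} since $u_p \in \langle u_p \rangle[f^{t_q}]$. Applied to $x \in X$ with $\pi_q x = f^{\mu_q} u_q$ this gives $\beta x - x = f^{\mu_q} u_p \in X$, forcing $\mu_q \geq \mu_p + 1$ either through $\mu_q \geq r_p$ (when $\mu_q < t_p$, so $f^{\mu_q} u_p \neq 0$ lies in $\langle f^{r_p} u_p \rangle$) or through $\mu_q \geq t_p \geq r_p$ (when it vanishes, using $r_p \leq t_p$).

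For (ii) with $q \in J(X)$ and $p < q$, the bound $t_q - \mu_q > 0$ is immediate from $r_q = \mu_q + 1 \leq t_q$. For the strict comparison with $t_p - \mu_p$, if $p \notin J(X)$ then \eqref{eq.zwrug} applied to $\vec r$ yields $t_p - \mu_p = t_p - r_p \leq t_q - r_q = t_q - \mu_q - 1$. If $p \in J(X)$, then $t_p < t_q$ and the exchange $\alpha = \alpha(u_p, u_p + f^{t_q - t_p} u_q)$ is valid because $f^{t_p}(f^{t_q - t_p} u_q) = f^{t_q} u_q = 0$; applied to $x \in X$ with $\pi_p x = f^{\mu_p} u_p$ it produces $\alpha x - x = f^{\mu_p + t_q - t_p} u_q \in X$, which under the hypothetical equality $t_p - \mu_p = t_q - \mu_q$ is exactly $f^{\mu_q} u_q$. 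Since $\mu_q < t_q$, this nonzero vector would lie in $\langle f^{\mu_q + 1} u_q \rangle$, a contradiction. Part (iii) then follows immediately: adding $\mu_q - \mu_p \geq 1$ from (i) to $(t_q - \mu_q) - (t_p - \mu_p) \geq 1$ from (ii) gives $t_q - t_p \geq 2$. The delicate point throughout is the bookkeeping around Lemma~\ref{la.nurpd} --- checking that each perturbation $y$ lies in the required $\langle u_k \rangle[f^{t}]$ and that each difference $\alpha x - x$ is nonzero --- which in every case reduces to $\mu_p < t_p$ or $\mu_q < t_q$, automatic from the respective index being in $J(X)$.
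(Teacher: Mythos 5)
Your proof is correct and follows essentially the same route as the paper: the projection argument for $|J(X)|\ge 2$, the exchanges $\alpha(u_q,\,u_q+u_p)$ and $\alpha(u_p,\,u_p+f^{t_q-t_p}u_q)$ for (i) and (ii), and adding the two strict inequalities to obtain (iii). The only divergence is organizational: the paper runs the automorphism argument uniformly over all $q>p$ (resp.\ $p<q$), whereas you dispatch the sub-cases where the exchanged generator need not be unrepeated via $\vec r\in\mathcal{L}(\vec t\,)$ from Corollary~\ref{cor.vlrg} --- a legitimate refinement that keeps every exchange within the stated scope of Lemma~\ref{la.nurpd}, and your explicit treatment of the degenerate case $f^{\mu_q}u_p=0$ is in fact slightly more careful than the paper's.
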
 

\begin{proof} 
We show that  \,$  |  J(X) | > 1 $.
Suppose $X$ is characteristic and 
\beq \label{eq.fals}
 \langle u_j \rangle \cap X  =   \pi _j X  \quad \text{for all}  \quad 
j \in \{1, \dots , m \}\! \setminus \! \{s\} .
\eeq 
Let $x \in X $ be written as 
$ x = \sum _{j = 1}^{m} x_j $, $ x_j = \pi _j x $, $ j = 1, \dots , m$. 
Then \eqref{eq.fals} implies $x_s \in X$. Hence $  \langle u_s \rangle \cap X  =   \pi _s X $.
Therefore $X$ is hyperinvariant (by Theorem~\ref{thm.wogn}).

(i)
Suppose  \,$p \in J(X) $ and 
$   p < q $.  Since $ u_p $ is unrepeated
we have $ t_p < t_q $.  
From  $\e(u_p) < \e (u_q) $  follows  $ \e(u_q + u_p ) = \e(u_q) $.
Let $ \alpha = \alpha ( u_q, u_q + u_p)$. Then $ \alpha X \subseteq X$. From  
\[
f^{\mu_q}  u_q   \in W(\vec  \mu ) = X^h =  \sum \nolimits  _{j=1} ^m  \pi_j X  
\]
follows $  f^{\mu_q}  u_q \in \pi _q X $.  Therefore $ f^{\mu_q} u_q  = \pi _q x  $
for some  $ x \in X $.  
Then $ \alpha x = x +  f^{\mu_q} u_p  \in X$, and therefore
$  f^{\mu_q } u_p  \in X $.  
Hence 
\[
 f^{\mu_q } u_p  \in  X  \cap \langle u_p \rangle =  \langle f^{r_p} 
 u_p \rangle = \langle f^{\mu_p +  1}   u_p \rangle ,
\]
which implies $\mu_q \ge \mu_p + 1 $. 

 (ii) 
If  $ 0  = t_i -  \mu_i $ then 
$ \pi _i X =  \langle f^{\mu _i } u _i  \rangle  = 0 $. Hence 
 \,$ 0 =   \langle u _i \rangle  \cap X =  \pi _i X $, and therefore
$ i \notin J(X) $.   
Suppose  $ q \in J(X) $ and $ p < q $. 
Then $ t_p < t_q $, and therefore  
\[
\e(  u_p + f^{t_q - t_p } u_q ) = 
\e(u_p ) = t_p . 
\]
Let $ \alpha = \alpha( u_p, u_p + f^{ t_q - t_p  } u_q ) $.
Because of  $f^{\mu_p} u_p \in \pi _p X $
there exists an $x \in X $ such that  such that $ \pi_p x =  f^{\mu_p} u_p $.
Then
\,$
\alpha  x = x +  f^{\mu_p  +  t_q - t_p } u _q  \in X $. 
Hence 
\[
 f^{\mu_p  +  t_q - t_p } u _q  \in X \cap \langle u_q \rangle = 
\langle f^{r_q}  u_q \rangle =  \langle f^{\mu_q + 1}  u_q \rangle . 
\] 
Therefore $ \mu_p  + t_q - t_p   \ge \mu_q + 1  $, which implies 
$ t_q - \mu_q > t_p - \mu _p $. 

(iii)  Suppose  
$  p, q   \in   J(X) $, 
$p < q $. 
Then 
\eqref{eq.ugbl}  and  \eqref{eq.ugbl2} imply  
\,$
1 \,  \le \,   \mu_q -  \mu_ p \,  < \,   t_q - t_p  $. 
Hence  $ t_ p + 1 <   t_q $.
\end{proof}

\medskip 

From Theorem~\ref{thm.hyhl} and   Lemma~\ref{la.strug} we obtain the following. 

\begin{theorem} \label{thm.wzmn}
Let  $ \vec \mu ,  \vec r  \in \mathcal{L}( \vec t \, )$ and 
let 
\beq \label{eq.sjbte}  
J = \{ i_1 ,  \dots , i_k\}  \subset I_u ,    \,    \, 
i_1 < \cdots < i_k,     \,  \,  |J | \ge 2 .
\eeq 
If  $X$ is  characteristic  and not hyperinvariant and   $ J(X) = J $ and 
$ X^h  = W(\vec \mu)$ and $ X_H   = W( \vec r )$,  
then
\[
\vec r = \vec \mu +  \sum \nolimits  _{s = 1} ^k \vec e_{i_s} ,
\]
and  $\vec \mu_J =  (  \mu _{i_1} , \dots, \mu _{i_k} ) $ satisfies 
\beq \label{eq.krnap} 
0 \le  \mu _{i_1}   <  \dots < \mu _{i_k} 
  \quad \textrm{and} \quad
0 < t_{i_1}  -  \mu _{i_1}    < \dots <  t_{i_k} -  \mu _{i_k} ,
\eeq
and 
\beq  \label{eq.smtrpi} 
t_{i_s} +1 < t_{i_{(s +1)}} ,  \: 
   s = 1, \dots, k-1.
\eeq
\end{theorem}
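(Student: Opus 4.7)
The plan is to derive the statement from Theorem~\ref{thm.hyhl} and Lemma~\ref{la.strug}, which between them contain essentially all the content; what remains is to translate the pointwise inequalities into the claimed chain of strict inequalities indexed by $J=J(X)$.

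First I would handle the relation $\vec r = \vec \mu + \sum_{s=1}^k \vec e_{i_s}$. Using \eqref{eq.ngws}, for each $j\in\{1,\dots,m\}$ Theorem~\ref{thm.hyhl}(i) gives $r_j-\mu_j\in\{0,1\}$, with the value $1$ occurring precisely when $\langle u_j\rangle\cap X \subsetneqq \pi_j X$, i.e.\ precisely when $j\in J(X)=J$. Thus $\vec r - \vec \mu = \sum_{s=1}^k \vec e_{i_s}$, which also reproves \eqref{eq.rplmu}. The membership $\vec\mu,\vec r\in\mathcal{L}(\vec t\,)$ is given in the hypotheses of the theorem.

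Next I would verify \eqref{eq.krnap}. The leftmost inequality $0\le \mu_{i_1}$ is the first entry of the admissibility condition \eqref{eq.zwrug} for $\vec\mu\in\mathcal{L}(\vec t\,)$. For the strict chain $\mu_{i_1}<\mu_{i_2}<\cdots<\mu_{i_k}$, I would apply Lemma~\ref{la.strug}(i) to each consecutive pair $(p,q)=(i_s,i_{s+1})$: here $p=i_s\in J(X)$ and $p<q$, so \eqref{eq.ugbl} yields $\mu_{i_s}<\mu_{i_{s+1}}$. For the chain $0<t_{i_1}-\mu_{i_1}<\cdots<t_{i_k}-\mu_{i_k}$, I would apply Lemma~\ref{la.strug}(ii): since each $i_s\in J(X)$, the first assertion of (ii) gives $0<t_{i_s}-\mu_{i_s}$ for every $s$; and taking $q=i_{s+1}\in J(X)$ with $p=i_s<q$ in \eqref{eq.ugbl2} yields $t_{i_s}-\mu_{i_s}<t_{i_{s+1}}-\mu_{i_{s+1}}$, giving the desired strict chain.

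Finally, for \eqref{eq.smtrpi} I would apply Lemma~\ref{la.strug}(iii) directly to each consecutive pair $p=i_s,\;q=i_{s+1}$, both of which lie in $J(X)$: \eqref{eq.smpt} gives $t_{i_s}+1<t_{i_{s+1}}$ for $s=1,\dots,k-1$. There is no genuine obstacle: the only care required is to make sure, at each application of parts (i) and (ii) of Lemma~\ref{la.strug}, that the correct index is chosen from $J(X)$ (in (i) it must be the smaller index $p$, in (ii) it must be the larger index $q$), which is automatic here because \emph{both} $i_s$ and $i_{s+1}$ belong to $J$. Since $|J|\ge 2$ is already part of the hypothesis, this matches the requirement $|J(X)|\ge 2$ noted at the start of Lemma~\ref{la.strug}, so no separate argument is needed for that.
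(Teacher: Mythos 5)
Your proposal is correct and follows exactly the route the paper intends: the paper states Theorem~\ref{thm.wzmn} as an immediate consequence of Theorem~\ref{thm.hyhl} and Lemma~\ref{la.strug} without writing out the details, and your argument supplies precisely those details (the $0$--$1$ dichotomy for $r_j-\mu_j$ from Theorem~\ref{thm.hyhl}(i), and the three chains of strict inequalities from parts (i)--(iii) of Lemma~\ref{la.strug} applied to consecutive pairs $i_s,i_{s+1}\in J$). Your remark about which of the two indices must lie in $J(X)$ in each application is exactly the right point of care, and it is satisfied here since both indices belong to $J$.
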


\medskip

Assuming \eqref{eq.sjbte} we prove in Section~\ref{sbs.jso}
 a converse of the preceding theorem.
If the entries of  $\vec t_J = ( t_{i_1} , \dots , t_{i_k} ) $ satisfy 
\eqref{eq.smtrpi} then there exist tuples 
$\vec \mu _J = ( \mu _{i_1} ,  \dots , \mu_{i_k}) $
 of nonnegative integers such that the inequalities 
 \eqref{eq.krnap}  hold.  
One can check that   $\vec \mu _J $  satisfies  \eqref{eq.krnap} 
if and only if 
  $ 0 \leq \mu_{i_1} < t_{i_1} $
and   
 $ \mu_{i_{s+1}} = \mu_{i_s} + \delta_s $
with  
  $  1 \leq \delta_s < t_{i_{s+1}} - t_{i_s} $, 
 $ s =1, \dots, k-1 $. 
In Lemma~\ref{la.njfos} we shall  see that  one can 
 extend  such a 
$\vec \mu _J$  
to an $m$-tuple $\vec  \mu  $
such that $ \vec  \mu \in \mathcal{L}( \vec t \,  ) $ and  
$ \vec \mu +    \sum \nolimits  _{j \in J}  \vec e_j  \in \mathcal{L}( \vec t \,  ) $.
Then, using Theorem~\ref{thm.main}  
one can construct a characteristic non-hyperinvariant subspace 
$X$ such that $J(X) = J$,
and 
\[ 
X_H \cap  \langle u_j \rangle =  \langle f^{\mu_j +1}  u_j \rangle
\quad \text{and} \quad 
X^h   \cap  \langle u_j \rangle  =  \langle f^{\mu_j}  u_j \rangle, \, j \in J . 
\]

\section{Intervals}  \label{sbs.int} 
Let 
$ A , B \in  {\rm{Inv}}(V, f) $ and  $A \subseteq B $. 
  The  set  
\[
[A, B ] = \{C \in  {\rm{Inv}}(V, f) , \,  A \subseteq C  \subseteq B \} 
\]
is  an interval    of the invariant subspace  lattice  $  {\rm{Inv}}(V,f) $. 
In this section we study intervals of the form 
$[ X_H , X^h  ]$,  which  can arise from  subspaces 
$X   \in {\rm{Chinv}}(V, f) \! \setminus \!  {\rm{Hinv}} (V,f)   $.  
A useful property of direct sums and intervals is the following. 
\begin{lemma} {\rm{\cite[p.\  38]{FuI}}} \label{la.mod} 
Let $ A, B, C , D  $  be subspaces of  \,$V$. 
Suppose   
$ B = A \oplus D $ and   $C \in [A, B]$.
Then   $ Z = C \cap D $
is the unique subspace satisfying  
\beq \label{eq.zwbs} 
  Z \subseteq D \quad \text{and} \quad C = A \oplus Z  .
\eeq 
\end{lemma}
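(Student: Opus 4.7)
The plan is to verify the two required properties of $Z = C \cap D$ (containment in $D$ and the decomposition $C = A \oplus Z$), and then establish uniqueness. The content is essentially the modular law for the subspace lattice, applied to the sandwich $A \subseteq C \subseteq B$ with $B = A \oplus D$.

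First I would show that $Z = C \cap D$ satisfies \eqref{eq.zwbs}. The containment $Z \subseteq D$ is by definition. For the decomposition, note $A + Z \subseteq C$ because both $A \subseteq C$ and $Z \subseteq C$. Conversely, any $c \in C \subseteq B$ decomposes uniquely as $c = a + d$ with $a \in A$, $d \in D$; since $a \in A \subseteq C$, one gets $d = c - a \in C$, hence $d \in C \cap D = Z$, proving $C \subseteq A + Z$. The sum is direct because $A \cap Z \subseteq A \cap D = 0$, using that $B = A \oplus D$.

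For uniqueness, suppose some $Z'$ satisfies $Z' \subseteq D$ and $C = A \oplus Z'$. Then $Z' \subseteq C$ and $Z' \subseteq D$, so $Z' \subseteq C \cap D = Z$. Conversely, take $z \in Z = C \cap D$. Since $z \in C = A \oplus Z'$, write $z = a + z'$ with $a \in A$ and $z' \in Z' \subseteq D$. Then $a = z - z' \in D$, and $A \cap D = 0$ forces $a = 0$, giving $z = z' \in Z'$. Hence $Z = Z'$.

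There is no real obstacle here; the argument is a direct unfolding of the direct sum hypothesis, and the only thing to be careful about is using $A \cap D = 0$ twice (once to establish directness of $A \oplus Z$, once to force $a = 0$ in the uniqueness step). The lemma will subsequently be applied with $A = X_H$, $B = X^h$, and $D$ chosen so that $X^h = X_H \oplus D$ (for instance, $D = D(X)$ from Corollary~\ref{cor.vlrg}(ii)), so that every $C \in [X_H, X^h]$ is parametrized by a unique subspace of $D$.
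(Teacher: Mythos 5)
Your proof is correct and follows essentially the same route as the paper: the paper simply cites the modular law to get $C = B\cap C = A \oplus (D\cap C)$ and again to get $(A\oplus Z)\cap D = (A\cap D)\oplus Z = Z$ for uniqueness, while you unfold those two applications of the modular law into explicit element-wise arguments. No gaps; the two uses of $A\cap D = 0$ that you flag are exactly the points the paper's shorter proof relies on implicitly.
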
 

\begin{proof} 
The modular law  implies 
\,$ C = B \cap C = ( A \oplus D  ) \cap C =  A \oplus ( D   \cap C ) $.  
Hence $ Z = D   \cap C  $ has the properties \eqref{eq.zwbs}. 
Conversely, if  \eqref{eq.zwbs} holds,  then 
\,$ C \cap D  = ( A \oplus Z ) \cap D =  ( A \cap D) \oplus Z = Z $. 
\end{proof}

For the proof  of Theorem~\ref{thm.main} 
we need the following  auxiliary result. 

\begin{lemma} \label{la.nrlm} 
Let $J  = \{i_1, \dots , i_k\}  \subseteq I_u$,    $ i_1 < \cdots   <  i_k$,
$ 2 \le k  $. 
Suppose 
 $ \vec  \mu   = ( \mu_1, \dots , \mu_m )   \in \mathcal{L}( \vec t \, )  $ and 
\beq   \label{eq.glu1}
0 \le  \mu _{i_1}  < \mu _{i_2}   < \dots < \mu _{i_k} 
\eeq 
and 
\beq   \label{eq.glu2}
0 < t_{i_1}  -  \mu _{i_1}  <  t_{i_2}  - \mu _{i_2}    < \dots <  t_{i_k} -  \mu _{i_k} ,
\eeq 
and suppose 
$ \vec r =   
\vec \mu +   \sum  \nolimits  _{s = 1} ^k  \vec e_{i_s }  \in  \mathcal{L}( \vec t \, )  $.
Let $ U = (u_1, \ldots , u_m ) \in \mathcal{U} $ and 
 $ \alpha \in  {\rm{Aut}}(V,f) $. 
\begin{itemize}
\item[{\rm{(i)}}]
Then 
\beq \label{eq.fwga}  
   \alpha f^{\mu_{i_s}}    u_{i_s}   = f^{\mu_{i_s}}    u_{i_s}   +   w_{i_s} 
\quad with \quad  w_{i_s}  \in W(\vec r) , \: s= 1, \dots , k.
\eeq  
\item[{\rm{(ii)}}]
If  $z \in 
{\rm{span}}\{ f^{\mu_{i_1}  }  u_{i_1} , \dots,  f^{\mu_{i_k}  }  u_{i_k} \}  $
then  
$\alpha z = z + w $  with $ w \in  W(\vec r) $. 
\end{itemize}
\end{lemma}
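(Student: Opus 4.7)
The plan is to obtain (ii) from (i) by $K$-linearity, and to prove (i) by expanding $\alpha u_{i_s}$ via the orbit description \eqref{eq.orb} of Lemma~\ref{la.nurpd} and then lodging each resulting summand inside the appropriate piece of the direct sum
$W(\vec r) = f^{r_1}\langle u_1\rangle \oplus \cdots \oplus f^{r_m}\langle u_m\rangle$ provided by Theorem~\ref{thm.fhl}(ii).

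Fix $s \in \{1,\ldots,k\}$. Since $i_s \in I_u$ the generator $u_{i_s}$ is unrepeated, so Lemma~\ref{la.nurpd} yields a decomposition
$\alpha u_{i_s} = u_{i_s} + v_s + \sum_{j \neq i_s} y_{s,j}$
with $v_s \in \langle f u_{i_s}\rangle$ and $y_{s,j} \in \langle u_j\rangle[f^{t_{i_s}}]$. Commuting $\alpha$ past $f^{\mu_{i_s}}$ produces the candidate error
$w_{i_s} = f^{\mu_{i_s}} v_s + \sum_{j \neq i_s} f^{\mu_{i_s}} y_{s,j}$.
The leading piece $f^{\mu_{i_s}} v_s$ lies in $\langle f^{\mu_{i_s}+1} u_{i_s}\rangle = f^{r_{i_s}}\langle u_{i_s}\rangle$ because $r_{i_s} = \mu_{i_s} + 1$; so the real task is to verify $f^{\mu_{i_s}} y_{s,j} \in f^{r_j}\langle u_j\rangle$ for each $j \neq i_s$.

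For this I would split on the comparison between $t_j$ and $t_{i_s}$. Unrepeatedness of $t_{i_s}$ rules out $t_j = t_{i_s}$, so either $j < i_s$, in which case $t_j < t_{i_s}$, $\langle u_j\rangle[f^{t_{i_s}}] = \langle u_j\rangle$, and every nonzero monomial appearing in $f^{\mu_{i_s}} y_{s,j}$ has $f$-exponent $\geq \mu_{i_s}$, reducing the required inclusion to $\mu_{i_s} \geq r_j$; or $j > i_s$, in which case $t_j > t_{i_s}$, $\langle u_j\rangle[f^{t_{i_s}}] = \langle f^{t_j - t_{i_s}} u_j\rangle$, and the required inclusion reduces to $t_j - \mu_j \geq t_{i_s} - \mu_{i_s} + (r_j - \mu_j)$. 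The offset $r_j - \mu_j$ equals $0$ when $j \notin J$, in which event $\mu_{i_s} \geq \mu_j$ and $t_j - \mu_j \geq t_{i_s} - \mu_{i_s}$ both follow from $\vec\mu \in \mathcal{L}(\vec t\,)$; and it equals $1$ when $j \in J$, where the strict inequalities \eqref{eq.glu1} and \eqref{eq.glu2} supply exactly the extra unit needed.

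Part (ii) is then immediate from $K$-linearity of $\alpha - \mathrm{id}$: if $z = \sum_s c_s\, f^{\mu_{i_s}} u_{i_s}$ then $\alpha z - z = \sum_s c_s\, w_{i_s}$ is a $K$-linear combination of elements of $W(\vec r)$, hence lies in $W(\vec r)$. The main obstacle is the bookkeeping in the case analysis above: one must match the strict versus weak inequalities among the $\mu$-entries and the $(t-\mu)$-entries exactly with the $J$-versus-not-$J$ dichotomy, a matching kept clean by the observation that unrepeatedness of $t_{i_s}$ eliminates the borderline case $t_j = t_{i_s}$.
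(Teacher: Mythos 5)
Your proof is correct and follows essentially the same route as the paper: expand $\alpha u_{i_s}$ via Lemma~\ref{la.nurpd}, apply $f^{\mu_{i_s}}$, and verify componentwise membership in $f^{r_j}\langle u_j\rangle$ by comparing exponents, with the strict inequalities \eqref{eq.glu1} and \eqref{eq.glu2} supplying the extra unit exactly when $j\in J$. The only (harmless) difference is that for $j\notin J$ you check the inclusion directly from the inequalities defining $\mathcal{L}(\vec t\,)$, whereas the paper first invokes the hyperinvariance of $W(\vec\mu)$ to place $f^{\mu_{i_s}}x_j$ in $\langle f^{\mu_j}u_j\rangle$.
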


\begin{proof} 
(i)  The generator $  u_{i_s}  $ is unrepeated. 
Therefore Lemma~\ref{la.nurpd} yields   
\begin{multline*} 
  \alpha   u_{i_s} =  u_{i_s}    +   v_{i_s} + y_{i_s}  
\quad
{\rm{with}}   \quad
   v_{i_s} \in   \langle   f  u _{i_s  }  \rangle ,  \quad
{\rm{and}}   \\ y_{i_s}  \in 
  \langle  u_j  ;  \; j = 1, \dots , m ; \;   j \ne i_s  \rangle ,
  \;\:  \e( y_{i_s} ) \le \e ( u_{i_s}  ) = t_{i_s} , 
\,\, 
s = 1 , \dots , k. 
\end{multline*} 
Then 
\[
  \alpha f^{\mu_{i_s}}    u_{i_s} =  f^{\mu_{i_s}} u_{i_s}  + w_{i_s} 
\quad \text{with} \quad  w_{i_s}  =  f^{\mu_{i_s}}    v_{i_s}  +  f^{\mu_{i_s}} y_{i_s} .
\]
We have \,$  \alpha f^{\mu_{i_s}}    u_{i_s}     \in W( \vec  \mu ) $, 
since 
\,$ f^{\mu_{i_s}}    u_{i_s}    \in  W( \vec  \mu ) $\,
and  $ W( \vec  \mu ) $   is hyperinvariant. 
Moreover 
$  f^{\mu_{i_s}}    v_{i_s} \in \langle   f^{\mu_{i_s +1}} u_{i_s}   \rangle \subseteq 
 W( \vec r)  \subseteq W(\vec  \mu) $.
Hence 
\beq \label{eq.inpwo}  
  f^{\mu_{i_s}} y_{i_s}   \in W( \vec  \mu )  
= \langle  f ^{\mu_1} u_1 , \dots,   f ^{\mu_m} u_m   \rangle.
\eeq 
It remains to show that 
$ f^{\mu_{i_s}} y_{i_s}   \in W( \vec r ) $.  
Let   $y_{i_s} $ be written as 
\[
 y_{i_s}  = 
 \sum \nolimits _{ j = 1, j \ne i_s} ^m  x_j  \quad \text{with} \quad    x_j \in 
  \langle  u_j  \rangle .
\] 
Then  \eqref{eq.inpwo} implies 
\beq \label{eq.vlrb} 
 f^{\mu_{i_s}} x_j  \in  W( \vec  \mu ) \cap \langle  u_j  \rangle =
\langle  f^{\mu_j }   u_j \rangle .
\eeq
If   $ j \notin J =  \{i_1, \dots , i_k \} $  
then 
$ r_j = \mu _j$,
and  \eqref {eq.vlrb}  yields 
\,$ f^{\mu_{i_s}} x_j  \in W(\vec r ) $.   Suppose $ j \in J$ and  $  j > i_s $.
Then  \,$\e( x_j ) \le \e(y_{i_s} ) \le  t_{i_s} $\,  implies  
\,$ x_j \in f^{ t _j -  t_{i_s} }  \langle  u_j  \rangle $.
Hence it follows from 
\eqref{eq.glu2}  that 
\[
 f^{ \mu_{i_s}}  x_j   \in   f^{ t _j -  t_{i_s}  + \mu_{i_s} }  \langle  u_j  \rangle    =
 f^{   \mu _j  + (  t _j -    \mu _j  ) -   ( t_{i_s}  - \mu_{i_s} )  }  \langle  u_j  \rangle  
\subseteq   f^{   \mu _j   + 1 }  \langle  u_j  \rangle = f^{  r_ j  }  \langle  u_j  \rangle ,
\]
and   we see in this case  
that   $  f^{ \mu_{i_s}}  x_j \in W(\vec r ) $. 
Now suppose 
$ j \in J $ and $ i_s >  j$.  If  $ j = i_\tau$, $ \tau < s$ 
 then \eqref{eq.glu1} implies  $ \mu_{i_s} >   \mu _{i_\tau} =  \mu_j $,  
and we obtain 
  \[
 f^{ \mu_{i_s}}  x_j  \in  f^{ \mu_{i_s}}  \langle  u_j  \rangle \subseteq 
 f^{ \mu_j  + 1 }  \langle  u_j  \rangle =  f^{ r_j}  \langle  u_j  \rangle ,
\]
and  therefore  $  f^{ \mu_{i_s}}  x_j \in W(\vec r ) $. 
Hence  $  f^{ \mu_{i_s}}  y_{i_s} \in W(\vec r ) $,  which 
completes the proof of \eqref{eq.fwga}. 

(ii) Let 
\,$ z = \sum \nolimits _{s = 1} ^k c_s  f^{ \mu_{i_s}} u_{i_s}$, $c_s \in K $. 
Then \eqref{eq.fwga} implies 
\[
\alpha z = z +   \sum \nolimits _{s = 1} ^k c_s w_{i_s}  \in z + W( \vec r ) . 
\]
\end{proof}

\medskip 

We have seen in  Theorem~\ref{thm.wzmn} 
that a subspace $X   \in {\rm{Chinv}}(V, f) \! \setminus \!  {\rm{Hinv}} (V,f)   $
with  $ X _H = W(\vec r ) $, $X^h = W(\vec \mu)  $  and $J(X ) =  \{i_s\} _{s =1} ^k $
satisfies the conditions \eqref{eq.smip}  - \eqref{eq.jnul}
   of Theorem~\ref{thm.main}  below. 
Hence, if $ (X_H , X^h) $ is the hyper\-invariant frame of $X$ then
the following theorem describes the corresponding interval 
$[  X_H , X^h ] $.

\begin{theorem}   \label{thm.main} 
Let 
$J  = \{i_1, \dots , i_k\} \subseteq I_u $, $ i_1 < \cdots < i_k$, 
$ 2  \le k $. 
Assume  
\beq  \label{eq.smip}  
t_{i_s} +1 < t_{i_{(s +1)}} ,  \: 
   s = 1, \dots, k-1.  
\eeq
Let 
\,$ \vec  \mu  ,  \vec r   \,   \in \,  \mathcal{L}( \vec t \, ) $\,  be such that  
\beq \label{eq.gluv}  
0 \le  \mu _{i_1} < \dots < \mu _{i_k} 
  \quad \textrm{and} \quad
0 < t_{i_1}  -  \mu _{i_1}  < \dots <  t_{i_k} -  \mu _{i_k}
\eeq
and 
\beq \label{eq.jnul} 
 \vec r = \vec \mu + \sum  \nolimits _{s = 1 } ^k  \vec e_{i_s} 
\eeq 
hold.    Set  
 \beq \label{eq.dmjar} 
 D _{\vec \mu _J}  
 = {\rm{span}}\{ f^{\mu_{i_1}  }  u_{i_1} , \dots,  f^{\mu_{i_k}  }  u_{i_k} \} .
\eeq 
\begin{itemize}
\item[{\rm{(i)}}]
Each subspace     $ X  \in  [ W(\vec r), W(\vec \mu) ] $ 
is characteristic. 
Moreover, 
$ X  \in    [ W(\vec r), W(\vec \mu) ] $ if and only if 
$X =  W(\vec r) \oplus Z $  for some subspace $ Z \subseteq   
  D _{\vec \mu _J}  $. 
\item[{\rm{(ii)}}] 
A subspace $ X   \in   [ W(\vec r), W(\vec \mu) ] $
 is hyperinvariant
if and only if 
\beq \label{eq.mgto}   
X = W(\vec r) \oplus {\rm{span}} \{ f^{\mu_{\tau_1}  }  u_{\tau_1} ,
\dots,  f^{\mu_{\tau_q}  }  u_{\tau _q} \} 
\eeq 
for some subset $ T = \{\tau_1, \dots , \tau _q \} $ of $J$. 
\end{itemize} 
\end{theorem}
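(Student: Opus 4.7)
The plan is first to reduce the interval to a linear-algebraic description, then to apply Lemma~\ref{la.nrlm} for characteristic-ness, and finally to fit the hyperinvariant elements into the Fillmore--Herrero--Longstaff lattice parametrization.

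For part (i), I would begin by establishing the decomposition $W(\vec \mu)=W(\vec r)\oplus D_{\vec\mu_J}$. By Theorem~\ref{thm.fhl}(ii) both hyperinvariant subspaces split as direct sums over $j$. For $j\notin J$ the condition \eqref{eq.jnul} gives $r_j=\mu_j$, so the summands $f^{\mu_j}\langle u_j\rangle$ and $f^{r_j}\langle u_j\rangle$ coincide. For $j\in J$, $r_j=\mu_j+1$ and the assumption $K=GF(2)$ yields $f^{\mu_j}\langle u_j\rangle=K f^{\mu_j}u_j\oplus f^{\mu_j+1}\langle u_j\rangle=Kf^{\mu_j}u_j\oplus f^{r_j}\langle u_j\rangle$. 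Summing over $j$ produces the asserted decomposition. Lemma~\ref{la.mod} then translates into the bijection $X\mapsto X\cap D_{\vec\mu_J}=:Z$ between $[W(\vec r),W(\vec\mu)]$ and the subspace lattice of $D_{\vec\mu_J}$, with $X=W(\vec r)\oplus Z$.

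The substantive step is then to show each such $X$ is characteristic. Given $\alpha\in\mathrm{Aut}(V,f)$ and $x=w+z\in X$ with $w\in W(\vec r)$, $z\in Z\subseteq D_{\vec\mu_J}$, hyperinvariance of $W(\vec r)$ gives $\alpha w\in W(\vec r)\subseteq X$. Lemma~\ref{la.nrlm}(ii), whose hypotheses are precisely \eqref{eq.gluv}--\eqref{eq.jnul}, produces $\alpha z=z+w'$ with $w'\in W(\vec r)\subseteq X$. Hence $\alpha x=\alpha w+z+w'\in X$, and $X$ is characteristic. This is where the bulk of the work sits, but Lemma~\ref{la.nrlm} does essentially all of it.

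For part (ii), in the forward direction any hyperinvariant $X\in[W(\vec r),W(\vec\mu)]$ equals $W(\vec s)$ for a unique $\vec s\in\mathcal L(\vec t)$, and the lattice anti-isomorphism of Theorem~\ref{thm.fhl}(iii) forces $\mu_j\le s_j\le r_j$ componentwise. For $j\notin J$ this pins $s_j=\mu_j=r_j$, while for $j\in J$ it leaves $s_j\in\{\mu_j,\mu_j+1\}=\{\mu_j,r_j\}$. Setting $T=\{j\in J:s_j=\mu_j\}$ and expanding $W(\vec s)$ by Theorem~\ref{thm.fhl}(ii) yields exactly the form \eqref{eq.mgto}. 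For the converse, given $T\subseteq J$ I would define $\vec s$ by $s_j=\mu_j$ for $j\in T$ and $s_j=r_j$ otherwise and verify $\vec s\in\mathcal L(\vec t)$, i.e.\ the monotonicity conditions \eqref{eq.zwrug}. The only case that is not immediate is when one of the indices $p<q$ belongs to $T\cup(J\setminus T)$ in a mixed way; here I would read off $s_p\le s_q$ and $t_p-s_p\le t_q-s_q$ from the strict chains \eqref{eq.gluv}, invoking $\vec\mu,\vec r\in\mathcal L(\vec t)$ for the remaining routine subcases. Theorem~\ref{thm.fhl} then identifies $X=W(\vec s)$ as hyperinvariant. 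The monotonicity bookkeeping in this last step is the only place one has to be careful; everything else is a direct assembly of Lemmas~\ref{la.mod} and \ref{la.nrlm} with Theorem~\ref{thm.fhl}.
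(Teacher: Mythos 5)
Your proof is correct and follows essentially the same route as the paper: the splitting $W(\vec \mu)=W(\vec r)\oplus D_{\vec\mu_J}$ together with Lemma~\ref{la.mod} parametrizes the interval, Lemma~\ref{la.nrlm}(ii) delivers the characteristic property, and Theorem~\ref{thm.fhl} handles part (ii). In fact you supply one detail the paper leaves implicit, namely the verification that $\vec\mu+\sum_{\nu\in T}\vec e_\nu\in\mathcal{L}(\vec t\,)$ for an arbitrary subset $T\subseteq J$, which does require the strict chains \eqref{eq.gluv} as you indicate.
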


\begin{proof} 
(i)  Corollary~\ref{cor.vlrg}(ii)    
implies 
$  W(\vec \mu)  = W(\vec r) \oplus  D _{\vec \mu _J}  $.
If 
\beq \label{eq.sbwt} 
 W(\vec r) \subseteq X  \subseteq W(\vec \mu) 
\eeq 
then 
 the subspace $ Z =   X \cap    D_{\vec \mu _J} $ satisfies $X =  W(\vec r) \oplus Z $ 
(by  Lemma~\ref{la.mod}).
Let $x \in X $. Then $ x = y + z  $ with $ y \in  W(\vec r) $, $ z \in Z $.
If $\alpha \in {\rm{Aut}}(f,V) $ then 
Lemma~\ref{la.nrlm}   implies 
$ \alpha z = w + z  $, $w \in  W(\vec r) $.
Since $  W(\vec r)  $ is hyperinvariant we have \,$ \alpha  y    \in  W(\vec r)  $, 
and 
 we obtain 
\,$ \alpha x \in W(\vec r) \oplus Z = X  $.

(ii)   A subspace $X$ is hyperinvariant
and satisfies  \eqref{eq.sbwt} 
 if and only if  
$ X = W(\vec \eta) $ for some  $\vec \eta \in  \mathcal{L}( \vec t \, ) $
with $ \vec \mu   \preceq \vec \eta  \preceq  \vec r$, 
that is, if and only if  $ \vec  \eta  = \vec \mu +  \sum _{\nu \in T } \vec e_{\nu} $
for some subset $T$ of~$J$. 
 \end{proof} 

\medskip

In \cite{Ba}  a subspace $Y$  is called a minext subspace if
it  complements  
a hyperinvariant subspace  $W$ such that $X = W \oplus Y $
is characteristic   non-hyperinvariant and $ X_H = W$.

\medskip 

\begin{example}  \label{ex.spcd} 
{\rm{Let $(V,f) $ be given by  \eqref{eq.vedr}.  
Then  $\vec t = (1, 3, 6)$ and 
  $ I_u = \{1,2,3\}$. 
The  sets $J$ with property \eqref{eq.smip} are 
$ J = \{1,2,3\} $, $ J = \{1,2\} $,  $ J = \{1,3\} $,  $ J = \{2,3\} $.
 In the following we  consider $ J = \{1,2,3\} $ and 
$ J =  \{1,3\}  $. 

Case $ J = \{1,2,3\} $.   Then $\sum _{j \in J} \vec e_j = (1,1,1) $.  
If \,$ \vec \mu = (0,1,2) $,  
\,$  \vec r = (1, 2, 3) $, 
or \,$ \vec \mu = (0, 1,3) $, \,$  \vec r = (1, 2, 4) $
then   $ \vec \mu , \vec r \in  \mathcal{L}( \vec t \, ) $
holds and \eqref{eq.gluv}  is satisfied. 
Let us  consider in more detail the case 
\,$ \vec \mu = (0,1,2) $, \,$  \vec r = (1, 2, 3)$.
We have  
\[
W( \vec \mu ) = \langle u_1,  f u_2, f^2 u_3 \rangle , \:   \:  
  W( \vec r ) = \langle  f^2 u_2, f^3 u_3 \rangle 
\]
and 
\,$
D_{\vec \mu} =    {\rm{span}} \{ u_1,  f u_2, f^2 u_3\} $. 
It is well known (see \cite{Kn}, \cite{NSW}) that  
the number of $k$-dimensional subspaces of an $n$-dimensional 
vector space over the field $\operatorname{GF}(q) $ is equal to  
the $q$-binomial coefficient 
\[
\binom{n}{k}_{ \!q}   =  \frac{( q^{n} - 1) (   q^{n-1} - 1 )  \cdots ( q^{ n-k +1} -1) } 
{( q^k - 1) ( q^{k-1}  - 1)  \cdots ( q - 1 ) }  . 
\]
Hence the vector space   $D_{\vec \mu} $ has
\[
  \binom{3}{0} _2  +  \binom{3}{1} _2 +  \binom{3}{2} _2 +  \binom{3}{3} _2
=
 1  + 7 + 7  + 1 = 16
\] 
 subspaces,  and therefore the interval  $[ W( \vec r ) ,  W( \vec \mu ) ] $ contains 
$ 16 $ characteristic subspaces.
We have $2^3=8$  choices for a subset $T$ of $J$. Hence there are
$8$ hyperinvariant subspaces in $[ W( \vec r ) ,  W( \vec \mu ) ]$, 
e.g. 
\beq \label{eq.lpmr}
 W(1, 2, 3 ) + \langle u_1 \rangle   = W(0,2,3) \quad \text{and}  \quad 
 W(1, 2, 3 )  +  \langle  fu_2,  f^2 u_3  \rangle  =  W(1,1, 2) .
\eeq 
Thus there are $8$  subspaces in $[ W( \vec r ) ,  W( \vec \mu ) ]$
that are not hyperinvariant.  
Examples  of such   subspaces  are 
\[ 
Y_2 = 
W( \vec r ) \oplus    {\rm{span}} \{   u_1 + f u_2 \} = 
\langle  f^2 u_2, f^3 u_3 ,   u_1 + f u_2 \rangle   = \langle     u_1 + f u_2  \rangle ^c 
\]
with  \,$ \dim Y_2  = 5 $,  
and 
\[
Y_3 = W( \vec r ) \oplus    {\rm{span}} \{   u_1 +  f u_2 , f^2 u_3 \}  
= \langle u_1 +  f u_2 ,  f^2 u_3     \rangle  =  \langle u_1 +  f u_2 ,  f^2 u_3  
   \rangle ^c 
\]
with   \,$ \dim Y_3 = 6 $.  Moreover, we know from 
 Example~\ref{ex.btw} and Example~\ref{ex.wct} 
that the  subspaces 
\beq \label{eq.hnbst} 
G = \langle u_1 +f u_2 + f^2 u_3  \rangle^c  =  
  W(1, 2, 3 ) \oplus     {\rm{span}} \{ u_1 +f u_2 + f^2 u_3  \}
\eeq 
and 
\beq \label{eq.rsmt}
 F =  \langle u_1 + fu_2 , fu_2 + f^2 u_3  \rangle ^c =   W(1, 2, 3 )  
\oplus       {\rm{span}} \{ u_1 + fu_2 ,   fu_2 + f^2 u_3 \} 
\eeq 
are not hyperinvariant.  

Case $J= \{1, 3\}$. 
 There are four pairs $ (\mu_1, \mu_3)$ that  satisfy \eqref{eq.gluv}, 
namely \[
 (\mu_1, \mu_3) \in \{
(0,1), (0,2), (0,3), (0,4) \} .
\]
We focus on  $(\mu_1, \mu_3) =  (0,2) $. 
Then $ \vec \mu = (\mu_1,  \mu _2 , \mu_3)   \in \mathcal{L}( \vec t \, ) $
if  
\[  \vec \mu \in \{    (0, 0  , 2) ,   (0, 1  , 2)  ,  (0, 2  , 2)  \} , \]
and we have \,$ \vec  r =  \vec \mu + (1 , 0 , 1)  \in \mathcal{L}( \vec t \, ) $\,
if   and only if $ \vec \mu =  (0, 1  , 2) $ or $ \vec \mu =  (0, 2  , 2)  $.
Then $ \vec r  =  (1, 1  , 3) $ or $ \vec r  =  (1, 2  , 3)$, respectively,  and 
$ D_{\vec \mu _J} =    {\rm{span}} \{ u_1, f^2 u_3 \} $ with 
\mbox{$\dim  D_{\vec \mu _J} = 2$}.
Hence,  besides their  endpoints   the respective   intervals 
$ [  W ( \vec r  ) , W ( \vec \mu  ) ] $
contain two subspaces which are hyperinvariant, namely    
\[
W(\vec r ) + {\rm{span}} \{ u_1 \}   =  W ( \vec r - \vec e_1) 
\quad  \text{and} \quad  W(\vec r ) + {\rm{span}} \{ f^2 u_3 \} = W(\vec r -  \vec e_3)  ,
\]
together with the non-hyperinvariant  subspace 
\[   W(\vec r )  + {\rm{span}} \{ z\} , \: z=   u_1+ f^2 u_3 .
\]
%
In the case 
$ ( \vec \mu,  \vec r  ) = \big(  (0, 1  , 2),   (1, 1  , 3) \big) $ 
the elements of $ [  W ( \vec r  ) , W ( \vec \mu  ) ] $ are 
$ W (1, 1  , 3)$, $ W (0, 1  , 3)$, $ W (1, 1  , 2)$,   $W(0, 1  , 2) $ and
\begin{multline} \label{eq.andrz} 
 W(1, 1  , 3)  + {\rm{span}} \{ z \}   = \langle f u_2 , f^3 u_3 , u_1 + f^2 u_3  \rangle
=  \langle f u_2 , u_1 + f^2u_3  \rangle = 
\\ \langle f u_2 , u_1 + f^2u_3  \rangle ^c . 
\end{multline} 
In the case $ ( \vec \mu,  \vec r  ) = \big(  (0, 2  , 2) ,   (1, 2  , 3) \big) $ 
the interval $ [  W ( \vec r  ) , W ( \vec \mu  ) ] $ consists of 
$ W  (1, 2  , 3) $,  $ W  (0, 2  , 3)  $, $ W  (1, 2  , 2) $, $ W (0, 2  , 2) $ and
\begin{multline*}
W  (1, 2  , 3) +  {\rm{span}} \{ z \}   = \langle f^2 u_2 , f^3 u_3 , u_1 + f^2 u_3  \rangle
=  \langle f^2 u_2 , u_1 + f^2 u_3  \rangle = 
\\
\langle  u_1 + f^2 u_3  \rangle  ^c = \langle  z \rangle  ^c .
\end{multline*} 
}}
\end{example} 

To refine Theorem~\ref{thm.main} we make use of matrices in 
column reduced echelon form. 
Let  \,$ D_{\vec \mu _J}  $ be the  $k$-dimensional vector space in  \eqref{eq.dmjar},
let $ \mathcal Z  $ denote  the lattice of subspaces of  
$ D_{\vec \mu _J} $ 
and let  $ \mathcal M _c $  be the set of $k \times k$ matrices 
in  column reduced echelon form. 
Recall   that a matrix  
 is in column reduced echelon form  if it has the following properties.
(i) The first non-zero  entry  in each column (as we go down)  is a $1$. (ii) 
 These ``leading $1$s'' occur  further down  as we go to the right of  the matrix;
(iii)  In the row of a leading $1$ all other entries are zero. 
To  a matrix   $ M \in K ^{k \times k} $ we associate the subspace 
\begin{multline} \label{eq.bnea} 
Z(M) =  {\rm{span}}  \{z_1 , \dots , z_k \}  \quad \text{with}  \quad 
\\    ( z_1 ,  z_2 , 
\dots  , z_k )  =   \big(
 f^{\mu_{i_1}}   u_{i _1} ,   f^{\mu_{i_2}}   u_{i _2}  , 
\dots ,   f^{\mu_{i_k}}   u_{i _k}   \big)   M .
\end{multline} 
Thus  $ Z \in  \mathcal  Z  $ if and only if $ Z = Z(M) $ for some 
$ M \in K ^{k \times k} $. If $ M_c $ is the   column reduced echelon form of $M$
then $ Z(M) =  Z(M_c) $. Uniqueness of  $ M_c$  
implies that  the mapping 
$ M_c \mapsto Z(M_c) $ is a bijection  from  $ \mathcal M_c  $ onto~$ \mathcal Z $.

The assumptions in  the following theorem are those of 
Theorem~\ref{thm.main}.

\begin{theorem} \label{thm.mzl}
Let $ M \in  K ^{k \times k} $ be in column reduced echelon form
and let $Z(M) $ be the associated 
subspace such that 
$X(M)  = W( \vec r) \oplus Z(M)  $  is a characteristic subspace in
$[ W ( \vec r) , W(\vec \mu ) ]$. 
\begin{itemize}
\item[ {\rm{ (i) }} ]   
$X(M) $ is hyperinvariant if and only if each nonzero
column of $M$ contains exactly one entry $1$. 
\item[ {\rm{ (ii) }} ]  
We have \,$  X(M) _H   = W( \vec r) $\,  
if and only if each nonzero column of $M$ has at least two entries equal to $1$.
\item[ {\rm{ (iii) }} ]  
We have 
\,$X(M)^h = W( \vec \mu) $\, 
if and only if  each row of  $M$ has at least one entry equal to  $1$. 
\end{itemize}  
\end{theorem}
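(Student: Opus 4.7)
The plan is to transfer everything to the $k$-dimensional space $D_{\vec \mu _J}$. Identify $D_{\vec \mu _J}$ with $K^k$ via the ordered basis $(f^{\mu_{i_1}}u_{i_1}, \dots, f^{\mu_{i_k}}u_{i_k})$; then $Z(M)$ is precisely the column span of $M$, and Corollary~\ref{cor.vlrg}(ii) gives $W(\vec\mu) = W(\vec r) \oplus D_{\vec \mu _J}$. Theorem~\ref{thm.main}(ii) then tells us that the hyperinvariant subspaces in $[W(\vec r), W(\vec\mu)]$ correspond bijectively to the $2^k$ coordinate subspaces ${\rm{span}}\{e_s : s \in S\}$ of $K^k$ via $S \mapsto W(\vec r) \oplus {\rm{span}}\{e_s : s \in S\}$.

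For (i), $X(M)$ is hyperinvariant exactly when $Z(M)$ is a coordinate subspace of $K^k$. The CREF of such a coordinate subspace ${\rm{span}}\{e_{s_1}, \dots, e_{s_q}\}$ (with $s_1 < \cdots < s_q$) is the matrix whose first $q$ columns are $e_{s_1}, \dots, e_{s_q}$ and whose remaining columns are zero; by the uniqueness of the bijection $M_c \mapsto Z(M_c)$ noted in the paragraph preceding the theorem, this is the only CREF representative of that coordinate subspace. Over $K = GF(2)$ this is precisely the condition that each nonzero column of $M$ contain exactly one entry equal to $1$.

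For (ii) and (iii), both $X(M)_H$ and $X(M)^h$ lie in $[W(\vec r), W(\vec\mu)]$ (the kernel because $W(\vec r) \subseteq X(M)$ and is hyperinvariant; the hull because $W(\vec\mu)$ is hyperinvariant and contains $X(M)$), so by Theorem~\ref{thm.main}(ii) each has the form $W(\vec r) \oplus {\rm{span}}\{e_s : s \in S\}$. The kernel corresponds to the \emph{largest} $S$ with ${\rm{span}}\{e_s : s \in S\} \subseteq Z(M)$, hence $X(M)_H = W(\vec r)$ iff no $e_s$ lies in the column span of $M$. Using the pivot-row structure of a CREF matrix --- condition (iii) of the CREF definition forces, in each pivot row $p_j$, every entry outside column $j$ to vanish, so $(c_\ell)_{p_j} = \delta_{\ell j}$ --- any linear combination $\sum a_\ell c_\ell = e_s$ reduces via the pivot rows to $a_\ell = \delta_{s, p_\ell}$, forcing the combination to be a single column $c_{\ell_0}$ equal to $e_s$. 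Thus $e_s \in Z(M)$ iff some nonzero column of $M$ has only a single nonzero entry, which over $GF(2)$ is the contrapositive of the condition in (ii). For the hull, the relevant $S_{\min}$ is the \emph{smallest} $S$ with $Z(M) \subseteq {\rm{span}}\{e_s : s \in S\}$; reading off the coordinates of the columns of $M$, this $S_{\min}$ is exactly the set of indices of nonzero rows of $M$, so $X(M)^h = W(\vec\mu)$ iff every row of $M$ contains a $1$. The only genuine step is the CREF pivot-row analysis used in (ii); parts (i) and (iii) are essentially direct unpackings of Theorem~\ref{thm.main}(ii) together with the bijection $M_c \leftrightarrow Z(M_c)$.
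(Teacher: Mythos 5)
Your proof is correct, and it reaches the same combinatorial conditions on $M$ as the paper, but by a somewhat different route. The paper computes $X(M)_H$ and $X(M)^h$ directly from the component formulas for characteristic subspaces, namely $X_H=\sum_j\bigl(X\cap\langle u_j\rangle\bigr)$ and $X^h=\sum_j\pi_jX$ (Theorem~\ref{thm.wogn}(i) and Theorem~\ref{thm.hyhl}(ii)), reducing everything to whether $Z(M)\cap\langle u_{i_s}\rangle$ vanishes and whether $\pi_{i_s}Z(M)$ vanishes. You instead stay entirely inside the interval: you observe that $X(M)_H$ and $X(M)^h$ both lie in $[W(\vec r),W(\vec\mu)]$, invoke Theorem~\ref{thm.main}(ii) to see that every hyperinvariant member of the interval is $W(\vec r)$ plus a coordinate subspace of $D_{\vec\mu_J}\cong K^k$, and then identify the kernel and hull with the largest coordinate subspace inside $Z(M)$ and the smallest one containing it, using Lemma~\ref{la.mod} to pass between subspaces of the interval and subspaces of $D_{\vec\mu_J}$. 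What your version buys is twofold: it does not need the characteristic-subspace structure theorems at all (only the classification of hyperinvariant subspaces in the interval), and your pivot-row analysis of the column reduced echelon form makes fully explicit a step the paper states rather loosely, namely that $e_s\in Z(M)$ precisely when some column of $M$ equals $e_s$ (the paper phrases this as a condition on ``the $s$-th column,'' which is not literally the right column in general). What the paper's version buys is that the same two formulas immediately generalize beyond this interval setting. Both arguments are sound.
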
 

\begin{proof}
Let $ i_s \in J$. 
Because of  $  Z(M)  \subseteq D_{\vec \mu _J } $ 
we have either   $ Z(M)  \cap  \langle  u_{i_s}  \rangle  = 0 $
or 
\beq \label{eq.ktwq} Z(M)  \cap  \langle  u_{i_s}  \rangle  = 
 {\rm{span}}\{ f^{\mu_{i_s}  }  u_{i_s}  \} , 
\eeq 
and similarly either   \,$ \pi _{i_s}  Z(M)  = 0 $\,  or 
 \,$ \pi _{i_s}  Z(M) =  {\rm{span}}\{ f^{\mu_{i_s}  }  u_{i_s}  \} $.
We note that \eqref{eq.ktwq} holds if and only if  the $s$-th column of 
the matrix $M$  contains exactly one entry $1$ (in row $s$). 
Moreover,  \,$ \pi _{i_s}  Z(M)  = 0 $\, holds if and only if the $s$-th row
is the zero row. 
Suppose \,$\dim Z(M) = \rank M = q $.  Then 
$ M \in \mathcal M_c $ implies 
$ M =  \bpm \tilde M &  0 _{k \times (k -q )} \epm  $ and    $\rank \tilde M = q  $.

(i) Each nonzero column of $ M$ contains exactly one entry~$1$
if and only if 
\,$ \Pi  ^{-1}      M  =    \diag ( I_q ,  \, 0 ) $\,  
for some  permutation matrix  $ \Pi$. This is  equivalent to 
\begin{multline*}  
 (  f^{\mu_{i_1}  }  u_{i_1} ,
\dots ,   f^{\mu_{i_k}  }  u_{i_k}  )  M = 
 (  f^{\mu_{i_1}  }  u_{i_1} ,
\dots ,   f^{\mu_{i_k}  }  u_{i_k}  )  \Pi    \diag ( I_p ,  \, 0 ) 
 = \\
 (   f^{  \mu_{\tau_1}  }  u_{\tau_1}  ,    \dots , f^{  \mu_{\tau_q}  }  u_{\tau_q}  , 
 0, \dots , 0 )
 \end{multline*} 
with $T =  \{\tau_1, \dots, \tau_q \} \subseteq  J =  \{i_1, \dots , i_k\}$. 
Now we apply   Theorem~\ref{thm.main}(ii).

(ii)  In the following let $X = X(M) $.
 From 
\[
X _H = \sum\nolimits _{j= 1}^m  ( X  \cap  \langle  u_j \rangle  ) = 
W(\vec r) +  \sum \nolimits _{s = 1} ^k  
\big( Z(M)   \cap  \langle  u_{i_s}  \rangle \big) 
\]
follows that $X _H  = W(\vec r)  $ is equivalent to 
\beq \label{eq.nzmrs}
Z(M)    \cap  \langle  u_{i_s}  \rangle = 0 , \, \, s = 1, \dots k . 
\eeq 
Condition \eqref{eq.nzmrs} holds if and only if 
$M$ does not contain a nonzero column with exactly one entry $1$. 

(iii) 
From  \,$X^h = \sum \nolimits _{j=1} ^m \pi _j X = W( \vec r ) + 
 \sum \nolimits _{s=1} ^k \pi _{i_s}  Z(M) $\, 
follows that 
$X^h   = W(\vec \mu )  $ is equivalent to 
\[ 
  \pi _{i_s}   Z(M)  = 
    \langle   f^{\mu _{i_s}}  u_{i_s}  \rangle  , \, \, s = 1, \dots k ,
\]
that is, $M$ has no zero row.  
\end{proof}




\begin{example}
{\rm{We
refer to Example~\ref{ex.spcd} and  consider the case $ J = \{1, 2, 3 \} $ with
\,$ \vec \mu = (0,1,2) $. In that case  we have \,$  \vec r = (1, 2, 3)$
and  therefore 
$D_{\vec \mu} =    {\rm{span}} \{ u_1,  f u_2, f^2 u_3\} $. 
We apply Theorem~\ref{thm.mzl}  to determine the subspaces 
$X$ with 
\beq \label{eq.hou}
X_H =    W(  \vec r  ) \quad \text{and} \quad X^h = W( \vec \mu )  . 
\eeq 
The 
two 
matrices  $M_1$  and $M_2$
 that simultaneously satisfy the conditions in \mbox{Theorem~\ref{thm.mzl}(ii)-(iii)}
are 
\[
 M_1 = \bpm  1  & 0 & 0 
\\ 1  & 0 & 0 
\\ 1  & 0 & 0  \epm 
\;\, \text{with} \;\:  Z(M_1) = {\rm{span}}\{ u_1 + f u_2 + f^2u_3 \} 
\]
and 
\[
 M_2 = 
\bpm  1 & 0 & 0 \\
\ 0 & 1 & 0   \\ 1 & 1 & 0   \epm   
\;\, \text{with}  \:\, \,   
Z(M_2) = {\rm{span}} \{ u_1 + f^2u_3,   f u_2  + f^2u_3  \} .
\]
The corresponding characteristic subspaces
$ X_i =   W(  \vec r  ) + Z(M_i)  $, $i = 1, 2 $, 
 are 
\[
 X_1 = G  =  \langle u_1 +f u_2 + f^2 u_3  \rangle^c 
\] 
 and 
\,$ X_2 = F =  \langle u_1 + fu_2 , fu_2 + f^2 u_3  \rangle ^c  $.
Hence, 
$X = G $ and $ X = F $ 
 are the only  elements of  $  [  W( \vec r)  ,W(  \vec \mu ) ]$
that satisfy  \eqref{eq.hou}. 
}}
\end{example} 

\section{Extensions}  \label{sbs.jso} 

In Theorem~\ref{thm.wzmn} 
we have seen that   for a given set $J$ 
 a pair  of $m$-tuples  $ ( \vec r , \vec \mu )  $ 
satisfies 
\beq \label{eq.inrct} 
 \vec \mu  \in  \mathcal{L}( \vec t\, )     \quad \text{and}  
  \quad \vec r   \:   =    \:  \vec \mu     + \sum \nolimits  _{i \in J} \vec e_i   
\in  \mathcal{L}( \vec t\, )    
\eeq  
only if the inequalities  \eqref{eq.krnap}  hold.  
In this section we show that 
 \eqref{eq.krnap} is sufficient for   the existence of such a pair.  We use this
fact for  the construction of characteristic non-hyperinvariant subspaces. 
Let 
\beq \label{eq.jxtym} 
J= \{i_1, \dots, i_k \}     \subseteq  \{1, \dots , m \} , 
\;   i_1 < \cdots < i_k,  \: 2 \le k .
\eeq 
Set 
$ \vec t_J = ( t_{i_1} , \dots  , t_{i_k} )  $.  
Suppose $ \vec  \mu _J  = ( \tilde \mu _{i_1}, \dots , \tilde \mu _{i_k} )
\in    \: \mathcal{L}( \vec t_J \, )     $
and   $ \vec \mu = (  \mu _1, \dots ,  \mu _m ) \in    \: \mathcal{L}( \vec t\, )     $.
We call $ \vec \mu $  an  {\em{extension}} of  $ \vec  \mu _J$
if 
\beq \label{eq.gowa} 
\mu_{i_s} = \tilde \mu _{i_s} ,  \:  s = 1 , \dots , k . 
\eeq 
Let \,$\mathcal E( \vec  \mu _J )  $ be the set   of all extensions 
  of $\vec \mu _J $.   It follows from Lemma~\ref{la.njfos} 
below that  \,$\mathcal E( \vec  \mu _J )  $  is nonempty. 
Since   \,$\mathcal E( \vec  \mu _J )  $ is a sublattice 
of~$ \mathcal{L}( \vec t\, )     $ 
there exists a maximum 
element of  \,$  \mathcal E( \vec  \mu _J )  $,  
which we 
call  the {\em{maximum extension}}. 

Suppose $ \vec  \mu  \in  \mathcal E( \vec  \mu _J )  $.
We take a closer look at the entries of  $ \vec  \mu  $. 
If \, 
$  i_s  \le  j \le  i_{s+1}  $\, then $  \tilde \mu_{i_s } \le  \mu_j \le   \tilde \mu_{ i_{(s+1)}  }  $
and 
\beq \label{eq.prms} 
 t_{i_s }  - \tilde \mu_{i_s }  \le   t_j -  \mu_j \le    t_{ i_{(s+1)}  }  
  - \tilde \mu_{i_{(s+1) } } .   \eeq
Since \eqref{eq.prms}   is equivalent to    
\[
     t_j - (  t_{ i_{(s+1)} } -  \tilde \mu_{i_{(s+1) } } )  \le \mu _j \le 
t_j- (t_{i_s} - \tilde \mu_{i_s}) 
\]
we obtain 
\beq \label{eq.bdugl} 
 \mu _j \le \min \{   t_j - 
(  t_{ i_s }  - \tilde \mu_{i_s }    ) ,  \tilde \mu_{ i_{(s+1)}  } \}  .  
\eeq 
If $  1  \le  j \le  i_1  $ then  \,$0 \le   \mu_j \le   \tilde \mu_{ i_1 }  $
and  \,$ 0 \le t_j - \mu_j  $.   
Hence 
\beq \label{eq.lksu}  
   \mu_j  \le \min \{ t_j ,  \tilde \mu_{ i_1 }  \}. 
\eeq 
If $ i_k \le j  $ then   $  \tilde \mu _{i_k}  \le  \mu _j $  and 
$ t_{i_k} -   \tilde \mu _{i_k}  \le t_j -  \mu _j  $,
and therefore 
\beq \label{eq.grtsa} 
   \tilde \mu _{i_k}  \le  \mu _j  \le t_j - ( t_{i_k} -   \tilde \mu _{i_k} ) .
\eeq 

\medskip

\begin{lemma} \label{la.njfos} 
Assume  \eqref{eq.jxtym}. 
Suppose  
$ \vec \mu_{J}   = ( \tilde \mu _{i_1} , \dots , \tilde  \mu_{i_k} ) 
\in  \mathcal{L}( \vec t \, ) $, that is, 
\beq  \label{eq.ndwo8}
0 \le \tilde  \mu_{i_1} \le \dots \le \tilde \mu_{i_k}  \quad
 \mbox{and} 
\quad  0 \le   t_{i_1} - \tilde \mu_{i_1} \le  \dots \le  t_{i_k}- \tilde \mu_{i_k} .
\eeq
Define 
\beq \label{eq.mjt8} 
\mu_j = 
\begin{cases} 
\min \{ t_j ,  \tilde \mu_{ i_1 }  \} 
 & \hbox{if} \quad 1 \le  j  \le i_1
\\
\min \{  t_j- (t_{i_s} - \tilde \mu_{i_s})  ,  \, \tilde \mu_{i_{(s+1)}}  \} &
 \hbox{if} \quad   i_s  \le j  \le i_{s+1} , \, 
 s =1, \dots, k-1
\\
t_j - ( t_{i_k} -  \tilde  \mu_{i_k} ) & \hbox{if}  \quad   i_k  \le j \le m . 
\end{cases}  
\eeq 
\begin{itemize}
\item[ {\rm{ ($\alpha$) }} ]  
Then  $\vec \mu $ is the maximum extension of  $ \vec \mu_{J}$. 
\item[ {\rm{ ($\beta$) }} ]   
If $J \subset I_u$ and 
\beq \label{eq.ngftoo2}
0 \le  \tilde  \mu _{i_1} < \dots < \tilde \mu _{i_k} 
  \quad \textrm{and} \quad
0 < t_{i_1}  - \tilde  \mu _{i_1}  < \dots <  t_{i_k} -  \tilde \mu _{i_k} ,
\eeq 
then \,$ \vec r   =   \vec \mu     + \sum \nolimits  _{i \in J} \vec e_i  
\in \mathcal{L}( \vec t \, )$. 
\end{itemize} 
\end{lemma}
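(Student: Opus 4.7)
The plan for part ($\alpha$) is to verify three things in turn: that the piecewise formula \eqref{eq.mjt8} evaluates to $\tilde\mu_{i_s}$ at each $j = i_s$ (so that $\vec\mu$ is an extension of $\vec\mu_J$); that $\vec\mu \in \mathcal{L}(\vec t\,)$; and that $\vec\mu$ is maximal among extensions. The last point is free, since the upper bounds \eqref{eq.bdugl}, \eqref{eq.lksu}, \eqref{eq.grtsa} already derived before the lemma hold for every extension of $\vec\mu_J$, and \eqref{eq.mjt8} is precisely the pointwise supremum of those bounds.

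First I would confirm the extension property and the consistency of the piecewise definition at the interval endpoints. At $j = i_s$ with $1 \le s \le k-1$ the second branch gives $\min\{\tilde\mu_{i_s}, \tilde\mu_{i_{s+1}}\}$, which equals $\tilde\mu_{i_s}$ by \eqref{eq.ndwo8}; at $j = i_{s+1}$ the inequality $t_{i_{s+1}} - \tilde\mu_{i_{s+1}} \ge t_{i_s} - \tilde\mu_{i_s}$ of \eqref{eq.ndwo8} forces the $\min$ to equal $\tilde\mu_{i_{s+1}}$. The endpoint branches ($j = i_1$ and $j = i_k$) are immediate from $\tilde\mu_{i_1} \le t_{i_1}$ and direct substitution.

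Next I would check $\vec\mu \in \mathcal{L}(\vec t\,)$. Nonnegativity of $\mu_j$ and of $t_j - \mu_j$ is direct from \eqref{eq.ndwo8}. For the monotonicity of $\mu_j$ and of $t_j - \mu_j$ within a single sub-interval $[i_s, i_{s+1}]$ the key observation is
\[
\mu_j = \min\bigl\{t_j - (t_{i_s} - \tilde\mu_{i_s}),\ \tilde\mu_{i_{s+1}}\bigr\}, \qquad t_j - \mu_j = \max\bigl\{t_{i_s} - \tilde\mu_{i_s},\ t_j - \tilde\mu_{i_{s+1}}\bigr\},
\]
so both are non-decreasing in $t_j$, and since $(t_j)$ is non-decreasing the required inequalities follow. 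The first and last sub-intervals are treated analogously, and transitions across sub-interval boundaries are automatic because the values at $j = i_s$ have already been pinned down to the monotone sequence $\tilde\mu_J$.

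For part ($\beta$) I would verify $\vec r \in \mathcal{L}(\vec t\,)$ by a short case split according to whether $j$ and $j+1$ belong to $J$. Nonnegativity of $t_{i_s} - r_{i_s} = t_{i_s} - \tilde\mu_{i_s} - 1$ uses the strict inequality $t_{i_1} - \tilde\mu_{i_1} > 0$ from \eqref{eq.ngftoo2} together with integer-valuedness. The only delicate monotonicity steps involve an index in $J$; there the combined hypothesis $J \subseteq I_u$ and \eqref{eq.ngftoo2} supplies the needed slack of one. Concretely, $i_s \in I_u$ yields $t_{i_s - 1} < t_{i_s} < t_{i_s + 1}$ as integers, hence integer gaps of at least one on either side of $i_s$, and \eqref{eq.ngftoo2} yields $\tilde\mu_{i_{s+1}} \ge \tilde\mu_{i_s} + 1$ and $t_{i_{s+1}} - \tilde\mu_{i_{s+1}} \ge t_{i_s} - \tilde\mu_{i_s} + 1$; feeding these into the $\max$-expression for $t_j - \mu_j$ and the $\min$-expression for $\mu_j$ makes the bumps $r_{i_s} = \mu_{i_s} + 1$ admissible.

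The main obstacle is the bookkeeping in part ($\beta$): one has to combine the integer jump $t_{i_s+1} - t_{i_s} \ge 1$ coming from $J \subseteq I_u$ with the strict inequalities \eqref{eq.ngftoo2} and show that they together supply exactly the single unit of room needed to accommodate each increment $\vec e_{i_s}$ without violating the co-monotonicity in $\mathcal{L}(\vec t\,)$.
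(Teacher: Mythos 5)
Your proposal is correct and follows essentially the same route as the paper: verify consistency of the piecewise formula at the indices $i_s$, check membership in $\mathcal{L}(\vec t\,)$ interval by interval, obtain maximality directly from the bounds \eqref{eq.lksu}, \eqref{eq.bdugl}, \eqref{eq.grtsa}, and for ($\beta$) reduce to the two critical inequalities at the indices of $J$ (the paper's \eqref{eq.mlkn} and \eqref{eq.jtdb}), which follow from the unrepeatedness gap $t_{i_s-1}<t_{i_s}<t_{i_s+1}$ together with the strict inequalities \eqref{eq.ngftoo2}. Your only deviation is cosmetic but pleasant: rewriting $t_j-\mu_j$ as $\max\{t_{i_s}-\tilde\mu_{i_s},\,t_j-\tilde\mu_{i_{s+1}}\}$ and using monotonicity in $t_j$ replaces the paper's three-way case split in each sub-interval.
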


\begin{proof} 
($\alpha$) 
To prove  that $\vec \mu  $ is an extension of $ \vec \mu_{J} $
we  have to show that the conditions  \eqref{eq.gowa} 
and 
\beq \label{eq.stumg8} 
0  \, \le \,  \mu_j  \, \le \,  \mu _{j+1} 
\eeq 
and 
\beq  \label{eq.zwtumg8} 
0  \, \le \,  t_j - \mu_j  \, \le \,  t_{j +1} - \mu _{j+1}  , 
\eeq 
$  j = 1, \dots , m - 1 $, 
are satisfied.  
We consider different cases. 
\begin{itemize} 
 \item[ {\rm{ (i) }} ]   Case $j = i_s$, $ s \in \{1, \dots , k\}$.
Then \eqref{eq.mjt8}  yields $ \mu _{i_s} = \tilde \mu _{i_s} $.
 \item[ {\rm{ (ii) }} ] Case   $ 1 \le j < i_1  $.  Then $ \mu _j  \ge 0 $
and $ t_j - \mu_j \ge 0 $.  From 
\,$ t_j \le t_{j+1} $\,  follows 
\[  
\mu_j \, = \, \min\{ t_j,  \tilde \mu_{i_1} \} \,  \le  \, 
\min\{ t_{j+1},  \tilde \mu_{i_1} \} \, = \, \mu_{j+1} .  
\] 
\begin{itemize} 
\item[ {\rm{ (I) }} ]   Case $\mu _j =  t_j $. Then 
\, $ t_j - \mu _j = 0 \le  t_{j+1} - \mu_{j+1} $ 
such that \eqref{eq.zwtumg8}  is satisfied. 
\item[ {\rm{ (II) }} ] Case $\mu _j =  \tilde \mu_{i_1} $.
Then \,$t_{j+1} \ge t_j \ge  \tilde   \mu_{i_1}$,   which implies 
\[
\mu_{j+1}  = \min\{ t_{j+1},  \tilde \mu_{i_1} \} = \tilde  \mu_{i_1} =\mu _j  
\]
and  \,$ t_{j+1} - \mu_{j+1} \,  \ge  \,  t_j  -  \mu _j  $. 
\end{itemize} 
\item[ {\rm{ (iii) }} ] Case $ i_k \le j \le m $. 
Then \,$ t_{i_k}   \le t_j $ and 
$ t_j   -  \mu _j  = t_{i_k} -  \mu_{i_k}  $ imply \eqref{eq.stumg8} 
 and \eqref{eq.zwtumg8}, 
respectively. 
\item[ {\rm{ (iv) }} ] Case  $ i_s  \le j < i_{(s+1)} $, $s \in \{ 1, \dots , k -1 \}$. 
\begin{itemize} 
\item[ {\rm{ (I) }} ]  Case $t_j \le  t_{j+1} \le  t_{i_s } + (   \tilde \mu_{i_{(s+1)}} -  
 \tilde \mu_{i_s}) $. 
Then  
\,$ \mu _j  = t_j-  (t_{i_s} - \tilde \mu_{i_s})  $\, and  
\,$ \mu_{j+1} =  t_{j+1}  -  ( t_{i_s} - \tilde \mu_{i_s}) $.
Hence 
\,$   \mu _j  \le \mu_{j+1} $.  
Moreover,  $ t_j   -  \mu _j   =  t_{j+1}  -   \mu_{j+1}  =
   t_{ i_s} -  \tilde \mu_{i_s}  $.

\item[ {\rm{ (II) }} ]  
Case  $   t_{ i_{(s+1)}} > 
t_{j+1} \ge   t_j \ge   t_{i_s } + ( \tilde  \mu_{ i_{(s+1)}} -   \tilde  \mu_{i_s} ) $.
Then 
$ \mu _j  =   \mu _ {j  + 1 } =   \tilde \mu_{i_{(s+1)}}  $,  which implies \eqref{eq.zwtumg8}, 
\item[ {\rm{ (III) }} ]      Case  
$ t_j \le     t_{i_s } + (   \tilde \mu_{i_{(s+1)}} -   \tilde \mu_{i_s})  \le t_{j+1}  $.
Then 
$ \mu _j =   t_j  -  ( t_{i_s }  -   \tilde \mu_{i_s}) \le  \tilde \mu_{i_{(s+1)}} =  \mu _ {j  + 1 }  $.   
Hence we obtain 
\[  t_j  -  \mu _j =  t_{i_s }  -   \tilde \mu_{i_s} \le t_{j+1}   - \tilde \mu_{i_{(s+1)}} 
= t_{j+1}   -  \mu _ {j  + 1 } . 
\]
\end{itemize} 
\end{itemize} 
From \eqref{eq.bdugl}  - \eqref{eq.grtsa}   
we conclude that $ \vec \mu $ is the maximum element 
of  $ \mathcal E( \vec  \mu _J )  $. 

($\beta$)  
 If  \,$ i_s \in I_u $\,  then the corresponding elementary divisor
$ \lambda ^{t_{i_s}} $ is unrepeated, 
and therefore
\beq \label{eq.urpa} 
 t_{(i_s  -1 )} <  t_{i_s} < t_{(i_s +1 )} ,  \;  i = 1 , \dots , k. 
\eeq  
We have $ \vec \mu  \in  \mathcal{L}( \vec t \, ) $ and 
\[
 r_j  =
\begin{cases} 
                        \tilde    \mu _j + 1 \:\: \:  \text{if} \:\: \:  j \in J \\
 \mu _j   \:\:  \:\text{if}  \:\: \: j \notin J . 
\end{cases}
\] 
Hence in order to prove $ \vec r  \in  \mathcal{L}( \vec t \, ) $
we have to show that 
\beq \label{eq.mlkn} 
\tilde  \mu_{i_s} < \mu_{(i_s + 1) } 
\eeq
 and  
\beq  \label{eq.jtdb} 
   t_{(i_s -1) } - \mu_{(i_s -1) }  < t_{i_s } -  \tilde \mu_{i_s } , 
\eeq 
$ s = 1,  2, \dots, k$. 
In the case $ s = k $  definition \eqref{eq.mjt8} implies
\[
\mu _{i_k + 1} = (  t_{i_k + 1} - t_{i_k}  ) + \tilde \mu _{i_k} .
\]
Then \eqref{eq.urpa} yields  $ \mu _{i_k + 1}  > \tilde \mu _{i_k}  $.
In the case   $ s < k $ we have 
\[  \mu_{(i_s +1)} =
  \min \{ t_{(i_s +1 )} - (t_{i_s} - \tilde \mu_{i_s}), \tilde \mu_{i_{(s+1)}}  \}.
\]
If  $ \mu_{(i_s +1)} = t_{(i_s +1 )} - (t_{i_s} - \tilde \mu_{i_s})  $ then
\eqref{eq.urpa}  yields \eqref{eq.mlkn}.  If $ \mu_{(i_s +1)} =  \tilde \mu_{i_{(s+1)}} $
then \eqref{eq.mlkn} follows from  the strict inequality  \,$ \tilde \mu_{i_s}
< \tilde \mu_{i_{(s+1)}}$.

It remains to deal with \eqref{eq.jtdb}.  Let 
$ s > 1 $. Then \,
 $    i _{s -1}  \le  \, i_s -1 \,  <  i_s  $\,  
implies 
\[
 \mu _{(i_s -1)} =  \min\{    t_{ (i_s -1) } - t_{ i_{(s - 1)}  }  + 
  \tilde \mu _{i_{(s-1)} }   , \,  \tilde \mu_{i_s}   \}  .
\] 
Suppose 
$ \mu _{(i_s -1)}  =   t_{ (i_s -1) } - t_{ i_{(s - 1)}  }  + 
  \tilde \mu _{i_{(s-1)} } $. 
Then   
\,$
   t_{ i_{(s - 1)}  }  -   \tilde \mu _{i_{(s-1)} }   < t_{i_s } -  \tilde \mu_{i_s } $\,
implies 
\begin{multline*}
 t_{(i_s -1) } - \mu_{(i_s -1) }   = t_{(i_s -1) } -
[
  t_{ (i_s -1) } - t_{ i_{(s - 1)}  }  + 
  \tilde \mu _{i_{(s-1)} } 
]  = 
\\ t_{ i_{(s - 1)}  }  -
  \tilde \mu _{i_{(s-1)} }   < t_{i_s } -  \tilde \mu_{i_s } .  
\end{multline*} 
Suppose $ \mu _{(i_s -1)}  =    \tilde \mu_{i_s}  $. 
Then \eqref{eq.urpa}  implies  
\[   t_{(i_s -1) } - \mu_{(i_s -1) }   = t_{(i_s -1) } -  \tilde \mu_{i_s}  
<    t_{i_s  } -  \tilde \mu_{i_s} .  
\]  
Let $ s = 1 $.  Then
  \,$ \mu _{( i_1  -1 )}  = \min\{t_{(i_1 -1)}, \tilde \mu _{i_1}\}$. 
If $\mu _{( i_1  -1 )} = \tilde \mu_{i_1}$,
then   
\beq \label{eq.nfsi} 
t_{(i_1 -1) } - \mu _{(i_1 -1)} < t_{i_1} - \tilde \mu _{i_1} 
\eeq 
 follows   from  \eqref{eq.urpa}.   If  \,$\mu _{( i_1  -1 )} =  t_{(i_1-1)}$
then  the strict inequality  $ 0 < t_{i_1} - \tilde{\mu}_{i_1}$ 
in \eqref{eq.ngftoo2} implies \eqref{eq.nfsi}.   
\end{proof}

We note without proof that the minimum
element  
 of   $\mathcal{E}(\vec \mu_J) $
is given by 
\beq \label{eq.mnxtn} 
\mu_j = 
\begin{cases}  \max\{0, \,  t_j - ( t_{i_1} -  \tilde  \mu_{i_1} ) \} 
& \hbox{if} \quad 1 \le  j  \le i_1
\\
\max  \{  t_j- (t_{i_{(s+1)}} - \tilde \mu_{i_{(s+1)}})   , \,
 \tilde \mu_{i_s}  \} &
 \hbox{if} \quad   i_s  \le j  \le i_{s+1} , \, 
 s =1, \dots, k-1
\\
\tilde \mu _{i_k}   & \hbox{if}  \quad   i_k  \le j \le m . 
\end{cases}  
\eeq

\medskip 

The next theorem provides  an existence result.
It shows that to a given admissible set $J$ there exists 
a characteristic non-hyperinvariant subspace $X$ 
such that \mbox{$J(X) = J$.} 

\medskip

\begin{theorem}  \label{thm.mxstr}
Assume 
\beq \label{ex.mtud}  
J= \{i_1, \dots, i_k \}    \subseteq I_u ,  \: i_1 < \cdots < i_k, \;
2 \le k .
\eeq
Suppose 
$\vec \mu_{J}   = ( \tilde  \mu _{i_1} , \dots ,  \tilde   \mu_{i_k} )     \in      
\mathcal{L}( \vec t _J  \, )  $ and let 
$\vec \mu   = (\mu_1, \dots , \mu_m) 
 \in \mathcal{L}( \vec t \, )  $ be  the maximum   extension of
$\vec \mu_{J} $.  
Set 
$\vec r   \:   =    \:  \vec \mu     + \sum \nolimits  _{i \in J} \vec e_i    $. 
Then 
the following statements are equivalent. 
\begin{itemize} 
\item[\rm{(i)}] 
The entries of $ \vec \mu _J$ satisfy  the inequalities 
$ 0 \le \tilde  \mu_{i_1} < \dots < \tilde \mu_{i_k}$\, 
and the strict  inequalities 
\,$  0 <   t_{i_1} - \tilde \mu_{i_1} < \dots < t_{i_k}- \tilde \mu_{i_k} $.
\item[\rm{(ii)}] There exists a characteristic non-hyperinvariant
subspace  $X$ with hyperinvariant frame 
 $(X_H ,  X^h ) = ( W(\vec r) , W (\vec \mu ))  $. 
\end{itemize}  
\end{theorem}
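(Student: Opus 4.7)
The plan is to treat the two directions separately, with (i)$\Rightarrow$(ii) relying on an explicit construction.

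\textbf{Direction (ii)$\Rightarrow$(i).} Suppose such an $X$ exists. Since $X$ is characteristic, Corollary~\ref{cor.vlrg}(i) applies: writing $\langle u_j\rangle\cap X=\langle f^{r_j}u_j\rangle$ and $\pi_jX=\langle f^{\mu_j}u_j\rangle$, we have $X_H=W(\vec r)$ and $X^h=W(\vec\mu)$ with $\vec r=\vec\mu+\sum_{j\in J(X)}\vec e_j$. Comparing with the given relation $\vec r=\vec\mu+\sum_{i\in J}\vec e_i$ forces $J(X)=J$. Since $X$ is not hyperinvariant, $|J(X)|\ge 2$ by the first part of Lemma~\ref{la.strug}; then parts (i) and (ii) of that lemma, restricted to the indices in $J$, yield precisely the strict inequalities of~(i).

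\textbf{Direction (i)$\Rightarrow$(ii).} First, the strict inequalities in (i) together with $J\subseteq I_u$ are exactly the hypotheses of Lemma~\ref{la.njfos}($\beta$); hence $\vec r=\vec\mu+\sum_{i\in J}\vec e_i$ lies in $\mathcal{L}(\vec t\,)$. Also $\vec\mu\in\mathcal{L}(\vec t\,)$ by Lemma~\ref{la.njfos}($\alpha$). Thus the assumptions of Theorem~\ref{thm.main} are satisfied, and the interval $[W(\vec r),W(\vec\mu)]$ is populated by characteristic subspaces of the form $W(\vec r)\oplus Z$ with $Z\subseteq D_{\vec\mu_J}$. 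It remains to exhibit one $Z$ for which the resulting $X$ has $X_H=W(\vec r)$ and $X^h=W(\vec\mu)$; Theorem~\ref{thm.mzl} reduces this to producing a $k\times k$ matrix $M$ in column reduced echelon form whose every row contains a $1$ and whose every nonzero column contains at least two $1$'s.

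\textbf{Explicit choice.} Let $M$ be the matrix whose first column is the all-ones vector $(1,1,\dots,1)^\top$ and whose remaining columns are zero. The only leading $1$ sits in position $(1,1)$; the row through it has all other entries $0$, so $M$ is in column reduced echelon form. Its single nonzero column contains $k\ge 2$ ones, and every row contains a $1$. By Theorem~\ref{thm.mzl}(ii),(iii) the subspace
\[
X=W(\vec r)\oplus Z(M),\qquad Z(M)=\mathrm{span}\bigl\{f^{\mu_{i_1}}u_{i_1}+f^{\mu_{i_2}}u_{i_2}+\cdots+f^{\mu_{i_k}}u_{i_k}\bigr\},
\]
satisfies $X_H=W(\vec r)$ and $X^h=W(\vec\mu)$; by Theorem~\ref{thm.main}(i) it is characteristic, and by Theorem~\ref{thm.mzl}(i) it is not hyperinvariant because the nonzero column of $M$ carries $k\ge 2$ ones rather than exactly one. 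This gives the required $X$.

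\textbf{Main obstacle.} The only delicate point I anticipate is bookkeeping: verifying that $\vec r\in\mathcal{L}(\vec t\,)$ really does need the strict form of the inequalities in (i), and that the identification $J(X)=J$ in the reverse direction is forced by the shape $\vec r-\vec\mu=\sum_{i\in J}\vec e_i$ together with Corollary~\ref{cor.vlrg}. Both are essentially one-line consequences of material already in the paper, so the construction step is the substantive content.
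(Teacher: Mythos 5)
Your proposal is correct and follows essentially the same route as the paper: the direction (ii)$\Rightarrow$(i) is Theorem~\ref{thm.wzmn} (which you unpack into Corollary~\ref{cor.vlrg} and Lemma~\ref{la.strug}, exactly as the paper derives it), and the direction (i)$\Rightarrow$(ii) combines Lemma~\ref{la.njfos}($\beta$) with Theorem~\ref{thm.main}, your explicit all-ones-column matrix being precisely the construction the paper records as Corollary~\ref{cor.nrzln}. The only detail left tacit is that condition \eqref{eq.smip} of Theorem~\ref{thm.main} follows from the strict inequalities in (i) (as in Lemma~\ref{la.strug}(iii)), but that is immediate.
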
 

\begin{proof}  
The implication (i) $\Ra$ (ii) follows form Theorem~\ref{thm.main}
and Lemma~\ref{la.njfos}(ii)
and the implication 
 (ii) $\Ra$ (i)    is  consequence of  Theorem~\ref{thm.wzmn}.  
 \end{proof}

\medskip 

In the case of the  maximum extension $\vec \mu$ 
of $\vec \mu_{J} $ 
one can give a concise description of the  subspaces in
  $   [ W( \vec r) ,  W(\vec \mu) ] $
 in Theorem~\ref{thm.main}.  

\medskip

\begin{theorem} \label{thm.nevr}  
Assume \eqref{ex.mtud}  
and 
\beq \label{eq.ngfto}
0 \le  \mu _{i_1} < \dots < \mu _{i_k} 
  \quad \textrm{and} \quad
0 < t_{i_1}  -  \mu _{i_1}  < \dots <  t_{i_k} -  \mu _{i_k}.
\eeq
Let $\vec \mu   = (\mu_1, \dots , \mu_m)  $
be the maximum  extension of
$\vec \mu_{J}   = (  \mu _{i_1} , \dots ,   \mu_{i_k} )  $
and let 
$ \vec r   =   \vec \mu     + \sum \nolimits  _{i \in J} \vec e_i $.
Let 
\,$
Z =  {\rm{span}}\{z_1, \dots , z_q \} $\, 
 be   a $q$-dimensional subspace of 
\beq \label{eq.dschwd} 
D_{\vec \mu _J}  = {\rm{span}} \{ f^{ \mu _{i_1} } u  _{i_1},  
\dots , f^{ \mu _{i_k} }  u_{i_k} \} . 
\eeq
If \,$ X = W( \vec r) \oplus Z $ 
satisfies  \,$X^h =  W( \vec \mu) $
then   
\,$ X =  
\langle z_1, \dots , z_ q\rangle  ^c$. 
\end{theorem}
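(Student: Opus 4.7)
Set $Y = \langle z_1, \dots, z_q\rangle^c$. The inclusion $Y\subseteq X$ is immediate: Theorem~\ref{thm.main}(i) says $X$ is characteristic, and $X$ contains each $z_i\in Z$, so it contains $Y$ as the smallest characteristic subspace through the $z_i$. For the reverse inclusion, since $Z\subseteq Y$ and $X = W(\vec r)\oplus Z$, it suffices to show $W(\vec r)\subseteq Y$; and because $Y$ is $f$-invariant and $W(\vec r) = \bigoplus_{j=1}^m f^{r_j}\langle u_j\rangle$ by Theorem~\ref{thm.fhl}(ii), it is enough to place each generator $f^{r_j}u_j$ inside $Y$.

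The engine is the following. The hypothesis $X^h = W(\vec\mu)$ combined with $X = W(\vec r)\oplus Z$ forces $\pi_{i_s}Z = \langle f^{\mu_{i_s}}u_{i_s}\rangle$ for every $s$, since $\pi_{i_s}W(\vec r) = f^{\mu_{i_s}+1}\langle u_{i_s}\rangle$ is strictly smaller than $\pi_{i_s}X = \langle f^{\mu_{i_s}}u_{i_s}\rangle$. Pick $\tilde z_s\in Z\subseteq Y$ with $\pi_{i_s}\tilde z_s = f^{\mu_{i_s}}u_{i_s}$. Because $\tilde z_s\in D_{\vec\mu_J}$ and the exchange automorphism $\alpha(u_{i_s}, u_{i_s}+y)$ fixes every other $u_{i_{s'}}$, we obtain $\alpha\tilde z_s - \tilde z_s = f^{\mu_{i_s}}y \in Y$ for every perturbation $y$ admissible in the sense of Lemma~\ref{la.nurpd}. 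Taking $y = fu_{i_s}$ yields $f^{\mu_{i_s}+1}u_{i_s} = f^{r_{i_s}}u_{i_s}\in Y$, which handles every $j\in J$.

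For $j\notin J$ I would select the branch of the maximum-extension formula~\eqref{eq.mjt8} that defines $\mu_j$ and choose $y$ accordingly. If $\mu_j = t_j - (t_{i_s}-\mu_{i_s})$ (including the right-tail case $j > i_k$ with $s=k$) then $y = f^{t_j - t_{i_s}}u_j$ applied to $\tilde z_s$ produces $f^{\mu_{i_s}+t_j-t_{i_s}}u_j = f^{\mu_j}u_j$. If $\mu_j = \mu_{i_{s+1}}$ (including the left-tail case $1\le j < i_1$ with $\mu_j = \mu_{i_1}$, where one uses $\tilde z_1$) then $y = u_j$ applied to $\tilde z_{s+1}$ produces $f^{\mu_{i_{s+1}}}u_j = f^{\mu_j}u_j$. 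The degenerate left-tail possibility $\mu_j = t_j$ gives $f^{\mu_j}u_j = 0\in Y$.

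The main obstacle is a bookkeeping check: in each sub-case one must verify that the prescribed $y$ satisfies the exponent constraint $f^{t_{i_s}}y = 0$ of Lemma~\ref{la.nurpd}, so that the exchange automorphism is genuinely an element of $\mathrm{Aut}(V,f)$. This follows from the strict inequalities $t_{i_s-1} < t_{i_s} < t_{i_s+1}$ guaranteed by $i_s\in I_u$, together with whichever inequality, $t_j - \mu_j \le t_{i_s}-\mu_{i_s}$ or $\mu_j \le \mu_{i_{s+1}}$, selects the chosen branch of the $\min$ in~\eqref{eq.mjt8}. Once these verifications are in place, every generator $f^{r_j}u_j$ lies in $Y$, so $W(\vec r)\subseteq Y$ and hence $X\subseteq Y$, completing the proof.
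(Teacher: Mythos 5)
Your proof is correct and follows essentially the same route as the paper's: reduce to showing that each $f^{r_j}u_j$ lies in $\langle z_1,\dots ,z_q\rangle ^c$, use the hypothesis $X^h=W(\vec \mu)$ to extract elements of $Z$ with nonzero projection onto each $\langle u_{i_s}\rangle$, and then apply exchange automorphisms case by case according to the branches of the maximum-extension formula \eqref{eq.mjt8}. The differences are cosmetic only (the paper works with the spanning set $\{z_1,\dots ,z_q\}$ rather than with chosen elements $\tilde z_s$, and carries out the generator-exponent verifications inline).
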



\begin{proof} 
It is obvious that  $   Z  ^c   \subseteq X  $.
Because of $ Z  \subseteq X $  the converse inclusion  $ X   \subseteq  Z ^c  $
is equivalent to  
\[
 W(\vec r) = \langle f^{r_1} u_1 , \dots , f^{r_m} u_m 
\rangle   \subseteq   Z ^c .
\]
Set 
\,$ \mathfrak Z = \{ z_1, \dots , z_q   \}$.    
To  check  that $  f^{r_j} u_j \in  Z ^c $ we separately deal with 
 different cases of $j $.   
In  each of the cases (i) - (iv) below 
we choose   suitable  automorphisms  $\alpha   \in   {\rm{Aut}}(f,V) $ 
such that 
$\alpha  z     =  z   + f^{r_j } u_j $ for some $ z  \in   \mathfrak Z  $. 
Then $  f^{r_j } u_j  \in   \langle   z \rangle  ^c   \subseteq  Z ^c $.
The assumption  $ X ^h = W(\vec \mu ) $ implies that for each $j \in J$ 
 there exists an element  $ z \in   \mathfrak  Z  $  
such that  $\pi _ j  z \ne 0 $.
Let $\alpha( u_j , u^{\prime} _j) $ denote the automorphism that
exchanges the generator $u_j$ by $  u^{\prime} _j$.
Recall that   $r_j =  1 + \mu _j $ if $j  \in J$ and $r_j = \mu _j $ if $j \notin J$. 
\begin{itemize} 
 \item[ {\rm{ (i) }} ]  Case $j = i_s$, $ s \in \{1, \dots , k\}$.
If $ z \in \mathfrak Z  $ satisfies $\pi _ {i_s}   z \ne 0 $ 
then $ \alpha =  \alpha (  u_{i_s} , u_{i_s}  + f u_{i_s} ) $ 
yields  $\alpha  z     =  z +   f^{1 + \mu_{i_s} }  u_{i_s}$. 
 \item[ {\rm{ (ii) }} ] Case   $ 1 \le j < i_1  $.  
If $\mu_j  =  r_j   = t_j $ then it is obvious that 
$f ^{r _j}  u_j = 0 \in Z^c$.  If  $\mu_j =  r_j = \mu_{i_1} $  
 then $i_1 \in I_u $ implies 
$ \e(u_j) < \e(u_{i_1} ) $ and $ \e(u_{i_1} ) = \e (   u_ {i_1}   + u_j )$. 
Hence, if  $ z \in \mathfrak Z  $ satisfies $\pi _ {i_1}   z \ne 0 $ then
 \,$ \alpha  = \alpha ( u_{i_1} ,  u_ {i_1}   + u_j )$\,   yields 
\,$ \alpha z =  z +   f^{ \mu_{i_1}}  u_j  = z +   f^{ r_j }  u_j $.  
\item[ {\rm{ (iii) }} ] Case $m \ge  j > i_k $.
Then \,$ t_j > t_{i_k} $ and  
$ \mu _j =  t_j  -  t_{i_k} +  \mu_{i_k} $. 
We have 
$ \e(u_{i_k}) =  \e(u_{i_k} +  f^{t_j - t_{i_k} } u_j)  $. 
If $ z \in \mathfrak Z    $ satisfies $\pi _ {i_k}   z \ne 0 $ then
 \,$ \alpha  = \alpha (   u_{i_k} ,  u_{i_k}  +  f^ { t_j - t_{i_k} } u_j ) $
yields 
$ \alpha      z  = z + f^{\mu _{i_k} + ( t_j - t_{i_k}) } u_j $.  
\item[ {\rm{ (iv) }} ] Case  $ i_s < j < i_{(s+1)} $, $s \in \{ 1, \dots , k -1 \}$. 
\begin{itemize} 
\item[ {\rm{ (I) }} ]  Case $t_j \le   t_{i_s } + (   \mu_{i_{(s+1)}} -  \mu_{i_s}) $.
Then  $ \mu _j = t_j - (   t_{i_s }  -   \mu_{i_s} )  $.
If $ z \in \mathfrak Z    $ satisfies  $\pi _ {i_s}   z \ne 0 $ then
 \,$ \alpha  = \alpha (  u_{i_s} ,  u_{i_s} + f^{  t_j - t_{i_s}  }  u_j )$\, 
yields  \,$  \alpha z = z + f^{ \mu _{i_s}  +  t_j - t_{i_s}   } u_j  $. 
\item[ {\rm{ (II) }} ]  Case
 $t_j \ge   t_{i_s } + (  \mu_{i_{(s+1)}} - \mu_{i_s} ) $.  Then 
$ \mu _j = \ \mu_{i_{(s+1)}}  $.
If $ z \in \mathfrak Z    $ satisfies  $\pi _ {i_{(s+1)}}   z \ne 0 $ then
  \,$ \alpha  = \alpha (  u_{i_{(s + 1)}} , 
 u_{i_{(s + 1)}} + u_j ) $\,   yields \,$  \alpha z = z +  f^{\mu _{i_{(s+1)} } }u_j  $. 
\end{itemize} 
\end{itemize} 
\end{proof}

\begin{corollary} \label{cor.nrzln} 
Assume    \eqref{ex.mtud} and  \eqref{eq.ngfto}.
Let 
\beq \label{eq.zcrg}
z =  f^{\mu_{i_1} } u  _{i_1}  + \cdots +  f^{\mu_{i_k} } u  _{i_k} .
\eeq 
If $ \vec \mu $ is the maximum extension of  
$\vec \mu_J=   (  \mu _{i_1} , \dots ,   \mu_{i_k} )  $
and $ \vec r = \vec \mu + \sum _{s = 1} ^k \vec e_{i_s} $
then   
\,$
X(z)  =  W(\vec r) + {\rm{span}} \{ z \} $\, 
is a characteristic  non-hyperinvariant  subspace and 
  $X(z)  =  \langle z \rangle ^c $. 
\end{corollary}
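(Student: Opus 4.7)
The plan is to apply Theorem~\ref{thm.nevr} in the minimal case $q=1$ with $z_1 = z$. For this I need to verify three things: that $X(z)$ has the form $W(\vec r) \oplus \operatorname{span}\{z\}$ with $\operatorname{span}\{z\} \subseteq D_{\vec\mu_J}$, that $X(z)$ is characteristic and not hyperinvariant, and most importantly that $X(z)^h = W(\vec\mu)$. Then Theorem~\ref{thm.nevr} yields $X(z) = \langle z\rangle^c$ at once.

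First, clearly $z \in D_{\vec\mu_J}$ by \eqref{eq.zcrg} and \eqref{eq.dschwd}. To see that the sum $W(\vec r) + \operatorname{span}\{z\}$ is direct, it suffices to show $z \notin W(\vec r)$: if $z$ lay in $W(\vec r)$ then for each $s$ its $\pi_{i_s}$-component $f^{\mu_{i_s}}u_{i_s}$ would lie in $W(\vec r) \cap \langle u_{i_s}\rangle = \langle f^{r_{i_s}}u_{i_s}\rangle = \langle f^{\mu_{i_s}+1}u_{i_s}\rangle$, which is false. Since the hypotheses of Theorem~\ref{thm.main} are in force, part (i) of that theorem immediately gives that $X(z)$ is a characteristic subspace lying in $[W(\vec r), W(\vec\mu)]$. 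Non-hyperinvariance follows from Theorem~\ref{thm.main}(ii): a one-dimensional complement of $W(\vec r)$ inside $X(z)^h$ can be spanned by a single $f^{\mu_{i_s}}u_{i_s}$ only if $z$ is a scalar multiple of one such vector, which is excluded because $k\ge 2$ and the $i_s$ are distinct.

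Next I compute $X(z)^h = \sum_{j=1}^m \pi_j X(z)$ using Theorem~\ref{thm.hyhl}(ii). For $j\notin J$ one has $\pi_j z = 0$, so $\pi_j X(z) = \pi_j W(\vec r) = \langle f^{r_j}u_j\rangle = \langle f^{\mu_j}u_j\rangle$ since $r_j=\mu_j$ outside $J$. For $j = i_s \in J$, $\pi_{i_s} z = f^{\mu_{i_s}}u_{i_s}$, so $\pi_{i_s}X(z) \supseteq \langle f^{\mu_{i_s}}u_{i_s}\rangle$, and $\pi_{i_s}X(z)\subseteq \pi_{i_s}W(\vec\mu) = \langle f^{\mu_{i_s}}u_{i_s}\rangle$, giving equality. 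Summing all components yields $X(z)^h = \langle f^{\mu_1}u_1,\dots,f^{\mu_m}u_m\rangle = W(\vec\mu)$.

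With all hypotheses of Theorem~\ref{thm.nevr} verified for $Z = \operatorname{span}\{z\}$, that theorem gives $X(z) = \langle z\rangle^c$. The only subtlety in the argument is checking directness of the sum and verifying $X(z)^h = W(\vec\mu)$; the rest is a bookkeeping application of already-proved results. I do not expect any real obstacle — the maximum-extension assumption is used implicitly through Theorem~\ref{thm.nevr}, whose proof (already given) is what carries the weight of generating every $f^{r_j}u_j$ from $z$ alone via suitable $f$-commuting automorphisms.
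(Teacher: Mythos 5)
Your proof is correct and follows essentially the same route as the paper: both arguments reduce the statement to Theorem~\ref{thm.nevr} after verifying $X(z)^h = W(\vec \mu)$. The only cosmetic difference is that the paper gets $X^h = W(\vec\mu)$ and $X_H = W(\vec r)$ by writing ${\rm{span}}\{z\} = Z(M)$ with $M = \bpm e & 0 \epm$ and citing Theorem~\ref{thm.mzl}, whereas you recompute the projections $\pi_j X(z)$ and the complement in $D_{\vec\mu_J}$ directly --- the same calculation, unpacked.
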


\begin{proof} 
We have $Z=  {\rm{span}} \{ z \} = Z(M)    $  with 
\[
  M = \bpm e  &   0 _{k \times (k - 1)} \epm  \quad \text{and} \quad e = (1, 1, \dots , 1) ^T . 
\] 
Theorem~\ref{thm.mzl} implies $ X^h = 
 W(\vec \mu) $. Hence we can apply Theorem~\ref{thm.nevr}.  
Moreover, 
$ X_H= 
 W(\vec r) $.  Therefore $X$ is not hyperinvariant. 
\end{proof}

\begin{example} 
{\rm{ 
Let $(V,f) $ be given by  \eqref{eq.vedr}.  
Referring to Example~\ref{ex.spcd} we consider the case 
 $ J = \{1, 3 \} \subseteq I_u = \{1, 2, 3\} $ with 
$\vec \mu _J =(\mu_1, \mu_3) =  (0,2) $.
Two extensions $\vec \mu $ of $\vec \mu _J $ 
satisfy  \,$ \vec  r =  \vec \mu + (1 , 0 , 1)  \in \mathcal{L}( \vec t \, ) $,
namely $ \vec \mu =  \vec \mu _1 =  (0, 1  , 2) $ and $ \vec \mu =
  \vec \mu _ 2 =  (0, 2  , 2)  $. 
The corresponding triples $ \vec r $ are 
$ \vec r_1 =  (1 , 1 , 3 )$  and $ \vec r_2 = (1, 2  , 3)  $.
We have $  \vec \mu _1 \preceq   \vec \mu _2 $.
Thus $ \vec \mu _2  $ is the maximum extension in $ \mathcal E_J $. 
Define $z = u_1 + f^2  u_3$ 
according to  \eqref{eq.zcrg}.  
Then 
\[
X(z) = 
W (\vec r_2 ) + {\rm{span}} \{ z \} =
 \langle f^2 u_2 , f^3 u_3 , u_1 + f^2 u_3  \rangle =  \langle  z \rangle  ^c 
\]
is a characteristic non-hyperinvariant subspace of $V$.
In the case of $ \vec \mu = \vec \mu _1 $ we recall \eqref{eq.andrz} 
and note that 
\,$ X = W (\vec r_1) + {\rm{span}} \{ z \} $ 
is not the characteristic  hull of a single vector. 
}}
\end{example} 

In \cite{AW7} we studied 
invariant subspaces  that  are the characteristic hull of  
a single vector. 
Using a decomposition lemma due to Baer~\cite{Bae2}
we proved the following result,  which yields part of 
Corollary~\ref{cor.nrzln}.   

\begin{theorem}   \label{thm.prla} 
For a given nonzero $ z \in V $ there exists a generator tuple
$U =(u_1, \dots , u_m) $ such that 
$z$ can be represented in the form
\beq \label{eq.ukog}
z =   f^{\mu_{\rho_1}} u_{\rho_1}+ \dots + f^{\mu_{\rho_k}}  u_{\
rho_k} , 
\eeq
with 
\beq  \label{eq.eugl} 
0 \le  \mu_{\rho_1} <   \dots < \mu_{\rho_k} \quad    
and \quad 
0 < t_{\rho_1} -  \mu_{\rho_1} < 
\dots <  t_{\rho_k}  - \mu_{\rho_k}.  
\eeq 
The following statements are equivalent.
\begin{itemize}
  \item[\rm{(i)}] 
The subspace 
$ X =  \langle z \rangle ^c $ is not hyperinvariant.
 \item[\rm{(ii)}] 
 At least two of the generators $ u_{\rho_i }$ in  \eqref{eq.ukog}
are unrepeated. 
\end{itemize} 
\end{theorem}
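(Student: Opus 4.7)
The plan begins with Baer's decomposition lemma from \cite{Bae2}: for any nonzero $z \in V$, this lemma supplies a generator tuple $U = (u_1, \dots, u_m) \in \mathcal{U}$ and indices $\rho_1 < \cdots < \rho_k$ so that $z$ admits the representation \eqref{eq.ukog} with the two strict chains in \eqref{eq.eugl}. The existence portion of the theorem is identified exactly with this lemma; the remainder of the argument then analyzes when $X = \langle z \rangle^c$ fails to be hyperinvariant. By the criterion of Theorem~\ref{thm.wogn}(ii) this is equivalent to asking whether $\pi_j X = X \cap \langle u_j \rangle$ fails for some $j$, and since $\pi_{\rho_i} z = f^{\mu_{\rho_i}} u_{\rho_i}$ is automatically contained in $\pi_{\rho_i} X$, the question reduces to whether every summand $f^{\mu_{\rho_i}} u_{\rho_i}$ already belongs to $X$.

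For the implication (ii) $\Rightarrow$ (i), I would set $J = \{\rho_i : \rho_i \in I_u\}$, a set of cardinality at least $2$ by assumption. If $J = \{\rho_1, \dots, \rho_k\}$, the conclusion is immediate from Corollary~\ref{cor.nrzln}. In general, I would reduce to this case by absorbing the repeated summands: when $u_{\rho_i}$ is repeated there is another generator $u_j$ with $t_j = t_{\rho_i}$, and an $f$-automorphism swapping $u_{\rho_i}$ with $u_j$ transforms $z$ into a vector whose non-$J$ contributions can be systematically cancelled by further swaps, leaving a characteristically-equivalent vector whose support lies in $I_u$; Corollary~\ref{cor.nrzln} then delivers $X_H \subsetneq X \subsetneq X^h$, so $X$ is not hyperinvariant.

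The converse (i) $\Rightarrow$ (ii), argued contrapositively, is the main obstacle. Assume that at most one $u_{\rho_i}$ is unrepeated; I must show $X = \langle z \rangle^c$ coincides with $\langle z \rangle^h$. For each repeated $u_{\rho_i}$, pick $j \ne \rho_i$ with $t_j = t_{\rho_i}$ and use the $f$-automorphism $\sigma$ that interchanges $u_{\rho_i}$ and $u_j$; then $\sigma z - z \in X$ is an isolated $\langle u_{\rho_i}\rangle + \langle u_j \rangle$-correction from which $f^{\mu_{\rho_i}} u_{\rho_i}$ can be extracted by a further swap. Iterating over all repeated indices produces each $f^{\mu_{\rho_i}} u_{\rho_i}$ inside $X$, so the condition \eqref{eq.adin} is verified and $X$ is hyperinvariant. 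The difficulty is the bookkeeping of these repeated swaps: one must verify that the extracted corrections do not leave residual terms outside $X$, and this is where the strictness of the chains in \eqref{eq.eugl} — the precise content of Baer's lemma — plays the crucial role, separating the summands of $z$ by their distinct heights $\mu_{\rho_i}$ and co-heights $t_{\rho_i} - \mu_{\rho_i}$ so that the inductive extraction can be carried out unambiguously.
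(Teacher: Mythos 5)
The paper itself gives no proof of Theorem~\ref{thm.prla}: it is quoted from \cite{AW7}, with only the remark that the normal form \eqref{eq.ukog}--\eqref{eq.eugl} comes from Baer's decomposition lemma \cite{Bae2}. Your treatment of the existence part therefore matches the paper's attribution, and your skeleton --- test hyperinvariance via Theorem~\ref{thm.wogn}(ii) by deciding whether each summand $f^{\mu_{\rho_i}}u_{\rho_i}$ lies in $X$ --- is the right one. But both implications are left with real gaps. For (i)$\Rightarrow$(ii), argued contrapositively, the situation is in fact simpler than you make it: extracting the repeated summands needs no ``inductive bookkeeping'' and no use of \eqref{eq.eugl}, because it is exactly Theorem~\ref{thm.wogn}(iii): for $\rho_i\notin I_u$ one has $f^{\mu_{\rho_i}}u_{\rho_i}=\pi_{\rho_i}z\in\pi_{\rho_i}X=X\cap\langle u_{\rho_i}\rangle$. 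If at most one summand is unrepeated, every summand then lies in $X$, and what you actually still owe is the conclusion $X=\langle z\rangle^h$; this requires the identity $\langle f^{s}u\rangle^c=\IIm f^{s}\cap\Ker f^{t-s}=\langle f^{s}u\rangle^h$ for a generator $u$ with $\e(u)=t$ (for unrepeated $u$ it follows from Lemma~\ref{la.nurpd}; for repeated $u$ it must be proved separately). You identify the wrong step as the hard one and do not carry out the easy one.

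The more serious gap is in (ii)$\Rightarrow$(i). The proposed ``absorption'' of the repeated summands by swaps is not an automorphism operation: no $\alpha\in\Aut(V,f)$ deletes a summand of $z$, so you do not obtain a characteristically equivalent vector supported on $I_u$. What is true is the decomposition $X=\langle z'\rangle^c+H$, where $z'$ is the sum of the unrepeated summands and $H=\sum_{\rho_i\notin I_u}\langle f^{\mu_{\rho_i}}u_{\rho_i}\rangle^h$ is hyperinvariant and contained in $X$. Corollary~\ref{cor.nrzln} does show that $\langle z'\rangle^c$ is not hyperinvariant, but that alone does not show that $X$ is not: the hyperinvariant summand $H$ could in principle supply the missing vector $f^{\mu_{\rho_p}}u_{\rho_p}$ for an unrepeated index $\rho_p$. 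One must still verify, using the strict chains \eqref{eq.eugl}, that $H\cap\langle u_{\rho_p}\rangle\subseteq\langle f^{\mu_{\rho_p}+1}u_{\rho_p}\rangle$, and that no element of $\langle z'\rangle^c$ can have $\pi_{\rho_p}$--component congruent to $f^{\mu_{\rho_p}}u_{\rho_p}$ modulo $\langle f^{\mu_{\rho_p}+1}u_{\rho_p}\rangle$ while its $\pi_{\rho_q}$--component vanishes for a second unrepeated index $\rho_q$; this last point is the $|K|=2$ parity argument that underlies the proof of Theorem~\ref{thm.hyhl}(i), and it is precisely where the hypothesis of two unrepeated generators enters. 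Without these steps the equivalence is not established.
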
 
The assumptions \eqref{ex.mtud} and  \eqref{eq.ngfto}
in Corollary~\ref{cor.nrzln}  
imply that in \eqref{eq.zcrg}
all  generators $u_{i_s}$,  $ s = 1, \dots , k$, are unrepeated
and that \eqref{eq.eugl} holds. 
Hence it follows from Theorem~\ref{thm.prla} 
that the characteristic  subspace 
$X(z) = \langle z \rangle ^c $ is not hyperinvariant.

\medskip

We reexamine Shoda's theorem. 
Using Corollary~\ref{cor.nrzln} or 
Theorem~\ref{thm.prla} we refine the 
implication (ii) $\Ra $ (i) in Theorem~\ref{thm.vnpsa}. 

\begin{corollary} \label{cor.shrv}
Let 
 $ \lambda ^R   $ and  $ \lambda ^S $ 
be unrepeated elementary divisors of $f$ such that 
 \,$ R  + 1  < S $. 
Let
 $ u $ and $v$ be generators of $(V, f) $ with $\e(u) = R $ and $
 \e(v) = S $. 
If   the integers $ s$ and  $ q $ satisfy 
\[
0 \le s < q \quad and \quad  0 < R -s < S -q 
\] 
then 
 \,$  X =      \langle f^{s  } u +   
  f^{q } v \rangle ^c  $\, 
is a characteristic subspace of $V$   that is not hyperinvariant.
\end{corollary}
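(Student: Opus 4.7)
The plan is to reduce the claim to a direct application of Corollary~\ref{cor.nrzln}. For this I need to exhibit a single generator tuple $U=(u_1,\dots,u_m)$ of $(V,f)$ in which $u$ occupies a position $i_1$ with $t_{i_1}=R$ and $v$ occupies a position $i_2$ with $t_{i_2}=S$. Once such a $U$ is fixed, the vector $z=f^s u+f^q v$ takes precisely the form $f^{\mu_{i_1}}u_{i_1}+f^{\mu_{i_2}}u_{i_2}$ required in~\eqref{eq.zcrg}, with $J=\{i_1,i_2\}$ and $\vec\mu_J=(s,q)$.

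To construct such a $U$ I would start from any generator tuple $U^{(0)}=(u_1^{(0)},\dots,u_m^{(0)})$ of $V$. Since $\lambda^R$ and $\lambda^S$ are unrepeated, there are unique indices $i_1,i_2\in I_u$ with $t_{i_1}=R$ and $t_{i_2}=S$, and $i_1<i_2$ because $R<S$. The generator $u$ has exponent $R=t_{i_1}$, so by Lemma~\ref{la.nurpd} (specifically \eqref{eq.orb}) one has $u\in[u_{i_1}^{(0)}]$; hence the exchange $\alpha(u_{i_1}^{(0)},u)$ yields a generator tuple $U^{(1)}$ whose $i_1$-th entry is $u$. Applying Lemma~\ref{la.nurpd} a second time to $U^{(1)}$ at position $i_2$, the generator $v$ lies in $[u_{i_2}^{(1)}]$, and the exchange $\alpha(u_{i_2}^{(1)},v)$ produces a tuple $U$ whose $i_1$-th and $i_2$-th entries are $u$ and $v$, respectively.

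Next I would verify the hypotheses of Corollary~\ref{cor.nrzln} with $k=2$. The set $J=\{i_1,i_2\}$ is contained in $I_u$ and satisfies \eqref{ex.mtud} because $t_{i_1}+1=R+1<S=t_{i_2}$. The tuple $\vec\mu_J=(s,q)$ obeys \eqref{eq.ngfto} since $0\le s<q$ and $0<R-s<S-q$ are exactly the standing assumptions on $s$ and $q$. With $z$ as in \eqref{eq.zcrg}, Corollary~\ref{cor.nrzln} then immediately yields that $X=\langle z\rangle^c$ is characteristic and not hyperinvariant, which is the desired conclusion.

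The only delicate point is the content of the second paragraph: simultaneously embedding the prescribed generators $u$ and $v$ into one generator tuple. This is handled cleanly by two successive applications of Lemma~\ref{la.nurpd}, exploiting that both $i_1$ and $i_2$ belong to $I_u$; beyond this the argument is a verification of hypotheses and an invocation of Corollary~\ref{cor.nrzln}. Alternatively, Theorem~\ref{thm.prla} could be cited in place of Corollary~\ref{cor.nrzln}, since \eqref{eq.eugl} is precisely the assumption on $s,q$ and both $u,v$ are unrepeated generators.
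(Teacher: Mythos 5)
Your proof is correct and follows exactly the route the paper intends: the corollary is stated there as an immediate consequence of Corollary~\ref{cor.nrzln} (or Theorem~\ref{thm.prla}), and your two successive exchanges via Lemma~\ref{la.nurpd} supply the one detail the paper leaves implicit, namely that $u$ and $v$ can be placed at the positions $i_1,i_2\in I_u$ of a single generator tuple so that $f^{s}u+f^{q}v$ has the form \eqref{eq.zcrg}. The verification of \eqref{ex.mtud} and \eqref{eq.ngfto} with $\vec\mu_J=(s,q)$ is exactly right, so nothing is missing.
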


\section{Concluding remarks} 

From Theorem~\ref{thm.fhl} one can deduce properties of
the lattice of  ${\rm{Hinv}}(V,f) $.  It is known  \cite{FHL} that 
${\rm{Hinv}}(V,f) $ is self-dual in the sense that
there exists a bijective map 
$\Lambda :  {\rm{Hinv}}(V,f) \to  {\rm{Hinv}}(V,f) $
such that 
\[ 
\Lambda (W + Y ) = \Lambda (W) \cap \Lambda (Y)  
\quad \text{and} \quad 
\Lambda (W \cap  Y ) = \Lambda (W) +  \Lambda (Y)  \]
for all $W, Y \in {\rm{Hinv}}(V,f)  $.   
It is not difficult to show (see \cite[p.\ 343]{GLR} that 
$ \vec r \in  \mathcal{L}(\vec t \,) $ 
if and only if  $ \overrightarrow{t -r} \in  \mathcal{L}(\vec t \,)  $. 
Hence, if $ \vec r \in  \mathcal{L}(\vec t \,) $ and 
\[ 
W(\vec r \, ) = f ^{r_1}V \cap V[ f^{t_1 - r_1} ]
 \,  + \cdots + \, 
 f^{r_m}V \cap V[ f^{t_m- r_m} ]  \in \mathcal{L}(\vec t \,) 
\]
then 
$ \Lambda \big( W(\vec r \, ) \big) $ is given by 
\[
\Lambda \big( W(\vec r \, ) \big) = 
 W(\overrightarrow{t -r} ) = 
f ^{t_1 - r_1}V \cap V[ f^{r_1} ]
 \,  + \cdots + \, 
 f^{t_m - r_m}V \cap V[ f^{r_m} ] . 
\]
For the moment it is an open problem whether the lattice 
${\rm{Chinv}}(V,f) $ is self-dual, and it remains to clarify 
the lattice structure of  ${\rm{Chinv}}(V,f) $.
A useful tool for such  an investigation will be the concept of hyperinvariant frame, 
which we introduced in this paper.


 \enlargethispage{\baselineskip}
 \end{document}